\documentclass[12pt]{article}

\input{mathdefs.sty}

\usepackage[latin1]{inputenc}   
\usepackage[T1]{fontenc}       
\usepackage{amsmath}
\usepackage{amssymb}
\usepackage{amsthm}

\usepackage[usenames,dvipsnames]{color}

\usepackage{bm}
\usepackage{mathtools}
\usepackage{mathrsfs}
\usepackage{accents} 
\usepackage{hyperref}
\hypersetup{pdfstartview=}

\title{Spherical equidistribution in adelic lattices and applications
\thanks{The research leading to these results has received funding from the European Research Council under the European Union's Seventh Framework Programme (FP/2007-2013) / ERC Grant Agreement n. 291147.}}

\author{Daniel El-Baz}

\begin{document}

\maketitle

\newtheorem{defn}{Definition}[section]
\newtheorem{claim}{Claim}[section]
\providecommand*{\claimautorefname}{Claim}
\newtheorem{thm}{Theorem}[section]
\providecommand*{\thmautorefname}{Theorem}
\newtheorem{lemma}{Lemma}[section]
\providecommand*{\lemmaautorefname}{Lemma}
\newtheorem{cor}{Corollary}[section]
\providecommand*{\corautorefname}{Corollary}
\newtheorem{rmk}{Remark}[section]
\providecommand*{\rmkautorefname}{Remark}

\begin{abstract}
In this paper we study spherical equidistribution on the space of (translates of) adelic lattices, which we apply to understand the fine-scale statistics of the directions in the set of shifted primitive lattice points. We also apply our results to the distribution of the free path lengths in the Boltzmann--Grad limit for point sets such as (possibly non-rational) translates of the lattice points all of whose coordinates are squarefree.
Besides the equidistribution results for translates of expanding horospheres, a key ingredient is a probabilistic argument which allows us to tackle the technical difficulty of dealing with characteristic functions of compact sets with positive measure and empty interior.
\end{abstract}

\section{Introduction} \label{intro}
\subsection{Motivation} \label{motiv}

The cut-and-project method is a well-known tool to generate quasiperiodic point sets in $\mathbb{R}^d$, a notable example being the vertices of a Penrose tiling \cite{deBruijn1981, marklof_strombergsson_quasicrystals_fpl_2014}. For this and other typical examples \cite{BaakeGrimm}, the method consists in starting with a lattice in $\mathbb{R}^{d+m}$ for some integer $m \ge 1$ and {\em projecting} onto $\mathbb{R}^d$ those points of the lattice which ``make the {\em cut}'' as set by a so-called window, a compact subset of $\mathbb{R}^m$. We shall refer to point sets obtained in such a manner as Euclidean model sets, with the extra adjective ``regular'' when the window has non-empty interior, and ``weak'' otherwise. It also makes sense to allow for $\mathbb{R}^m$ to be replaced by an arbitrary locally compact abelian group \cite{Meyerbook, Moodyunifdist, BaakeGrimm}.

The purpose of this paper is to investigate certain questions in the situation of (possibly weak) ``adelic'' model sets, given by such a scheme in which our locally compact abelian group is $\mathbb{A}_f^d$, $d \ge 2$, where $\mathbb{A}_f$ denotes the finite adeles. A good example (of a weak adelic model set) to bear in mind is the set of primitive lattice points in $\mathbb{R}^d$, or indeed any --- possibly non-rational --- translate of this set. 

For simplicity, we describe two of our results precisely for the set \[ \mathcal{P} = \{ (x_1, \ldots, x_d) \in \mathbb{Z}^d \, : \, \gcd(x_1, \ldots, x_d) = 1 \} \subset \mathbb{Z}^d \] of visible lattice points, deferring the exact definition of adelic model sets along with the full statement of our theorems to \autoref{apps}.
This set $\mathcal{P}$ has density $\frac 1{\zeta(d)}$ (where $\zeta$ is the Riemann zeta function) and possesses the interesting property of containing arbitrarily large holes: given any $R > 0$, there exists a ball of radius $R$ not containing any visible lattice point --- a standard application of the Chinese remainder theorem.

Our first result deals with the distribution of free path lengths for the Lorentz gas in the Boltzmann--Grad limit, a first step in the study of the kinetic transport in this model.
There are a few classes of point sets for which this is understood. For random realisations of a Poisson point process, the distribution of the free path length is easily seen to be exponential, but the full kinetic transport is understood thanks to the work of Boldrighini, Bunimovich and Sinai \cite{boldrighini_bunimovich_sinai_1983}. In the case of the (Euclidean) lattice $\mathbb{Z}^d$, the distribution of the free path length is far less trivial  \cite{BourgainGolseWennberg1998,CagliotiGolse2003,BocaZaharescu2007, marklof_strombergsson_free_path_length_2010} and we also fully understand the kinetic transport through the work of Marklof and Str\"ombergsson \cite{marklof_strombergsson_boltzmann-grad_2011}.
Our inspiration for this paper is the case of regular cut-and-project sets in $\mathbb{R}^d$ as studied in \cite{marklof_strombergsson_quasicrystals_fpl_2014}, for which the internal space is another Euclidean space, say $\mathbb{R}^m$ for some positive integer $m$. We now describe the Lorentz gas model in our setting.

Place balls of radius $\rho>0$ centred at each point of $\mathcal{P}$ and denote by $\mathcal{S}_\rho$ the union of these balls. Consider the first time a particle starting from a point $\bm q \in \mathbb{R}^d$ and travelling along a straight line with initial direction $\bm v \in S^{d-1}$ hits one of those balls:
\begin{equation}\label{fpldef} \tau(\bm q, \bm v, \rho) = \inf\{t > 0 \, : \, \bm q + t\bm v \in \mathcal{S}_\rho \}. \end{equation}
A priori $\bm q$ may belong to $\mathcal{P}$, in which case we simply exclude the ball centred at $\bm q$ from $\mathcal{S}_\rho$ for the above definition to still be sensible.

In this setting, we are able to prove the existence of a limiting distribution as $\rho \to 0$ in the Boltzmann--Grad limit.

\begin{thm} \label{introfpl} For every $\bm q$ in $\mathbb{R}^d$, there exists a function $D_{\mathcal{P}, \bm q} \colon \mathbb{R}_{\ge 0} \to [0,1]$ such that for every Borel probability measure $\lambda$ on $S^{d-1}$ which is absolutely continuous with respect to the Lebesgue measure and every $\xi > 0$, 
\begin{equation} \lim_{\rho \to 0} \lambda(\{ \bm v \in S^{d-1} \, : \, \rho^{d-1} \tau(\bm q, \bm v, \rho) \ge \xi \}) = D_{\mathcal{P}, \bm q}(\xi).\end{equation} 
\end{thm}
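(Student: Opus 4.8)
The plan is to follow the strategy of Marklof--Str\"ombergsson, transposed to adelic lattices, and to deal with the empty-interior window by an outer clopen approximation whose overcounting is controlled by a moment (``probabilistic'') estimate. \emph{Step 1 (from free path lengths to counting in a cylinder).} After discarding, if necessary, the single scatterer centred at $\bm q$ (which does not affect the limit), the event $\rho^{d-1}\tau(\bm q,\bm v,\rho)\ge\xi$ says exactly that the open cylinder based at $\bm q$ with axis $\bm v$, length $\xi\rho^{-(d-1)}$ and radius $\rho$ contains no point of $\mathcal P$. Conjugating by a rotation $R(\bm v)\in\mathrm{SO}(d)$ taking $\bm v$ to $\bm e_1$ and applying $\Phi_\rho=\mathrm{diag}(\rho^{\,d-1},\rho^{-1},\dots,\rho^{-1})\in\mathrm{SL}(d,\mathbb R)$, this becomes: the \emph{fixed} open cylinder $\mathfrak Z(\xi)=\{\bm x:0<x_1<\xi,\ |(x_2,\dots,x_d)|<1\}$ contains no point of the rescaled translate $\Phi_\rho R(\bm v)\big(\mathcal P-\bm q\big)$.

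\emph{Step 2 (adelic model-set formulation and equidistribution).} I would realise $\mathcal P$ as a weak adelic model set: $\bm y\in\mathbb Z^d$ lies in $\mathcal P$ iff its finite-adelic image $\bm y^{*}$ lies in the window $W=\prod_p\big(\mathbb Z_p^d\setminus(p\mathbb Z_p)^d\big)\subset\mathbb A_f^{\,d}$, a closed set of Haar measure $1/\zeta(d)$ with empty interior. Then, for a point $\Lambda$ of the space $Y$ of translates of adelic lattices, the quantity
\[
  N_\xi(\Lambda)=\#\big\{(\bm y,\bm y^{*})\in\Lambda:\ \bm y\in\mathfrak Z(\xi),\ \bm y^{*}\in W\big\}
\]
is well defined, and $\lambda(\{\bm v:\rho^{d-1}\tau\ge\xi\})=\int_{S^{d-1}}\mathbf 1\{N_\xi(\Lambda_\rho(\bm v))=0\}\,d\lambda(\bm v)$, where $\Lambda_\rho(\bm v)$ is the image in $Y$ of the translated lattice $(\Phi_\rho R(\bm v),\mathrm{id})(\mathbb Z^d-\bm q)$ (for general, possibly irrational $\bm q$ this is a genuine translate, living in the affine version of $Y$). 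By the paper's spherical equidistribution theorem for translates of expanding horospheres in adelic lattices, the pushforward $\Prob_\rho$ of $\lambda$ under $\bm v\mapsto\Lambda_\rho(\bm v)$ converges weak-$*$, as $\rho\to0$, to the natural invariant probability measure $\mu$ on $Y$; here the hypothesis $\lambda\ll\mathrm{Leb}_{S^{d-1}}$ is used. If $\mathbf 1\{N_\xi=0\}$ were continuous we would be finished, with $D_{\mathcal P,\bm q}(\xi):=\mu(\{N_\xi=0\})\in[0,1]$.

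\emph{Step 3 (the empty-interior window).} Since $\partial W=W$ carries positive mass, $\mathbf 1\{N_\xi=0\}$ is genuinely discontinuous on a large set and Portmanteau does not apply directly; moreover, having empty interior, $W$ admits no nonempty clopen \emph{subset}, so only outer approximation is available. Set $W_N=\prod_{p\le N}\big(\mathbb Z_p^d\setminus(p\mathbb Z_p)^d\big)\times\prod_{p>N}\mathbb Z_p^d\supseteq W$ (clopen, $W_N\downarrow W$), and let $N_\xi^{(N)}\ge N_\xi$ be the count with $W$ replaced by $W_N$, so $\{N_\xi^{(N)}=0\}\subseteq\{N_\xi=0\}$. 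For the \textbf{lower bound}: because $W_N$ is clopen and $\mu$ is absolutely continuous along the real place, $\mathbf 1\{N_\xi^{(N)}=0\}$ is continuous off the $\mu$-null set where some lattice point meets $\partial\mathfrak Z(\xi)$, so equidistribution gives $\Prob_\rho(N_\xi^{(N)}=0)\to\mu(N_\xi^{(N)}=0)$; by discreteness of the lattice and $W_N\downarrow W$ one has $\{N_\xi^{(N)}=0\}\uparrow\{N_\xi=0\}$, hence $\mu(N_\xi^{(N)}=0)\uparrow\mu(N_\xi=0)$, and therefore $\liminf_{\rho\to0}\Prob_\rho(N_\xi=0)\ge D_{\mathcal P,\bm q}(\xi)$. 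For the \textbf{upper bound} --- where the probabilistic argument is essential --- write $\Prob_\rho(N_\xi=0)\le\Prob_\rho(N_\xi^{(N)}=0)+\mathbb E_\rho[M_N]$, where $M_N$ counts the ``near misses'', i.e.\ lattice points with $\bm y\in\mathfrak Z(\xi)$ and $\bm y^{*}\in W_N\setminus W$. The set $W_N\setminus W$ is open with $\mathrm{Haar}(W_N\setminus W)\to0$; combining equidistribution with a Rogers/Siegel-type moment estimate on $Y$ and the quantitative non-divergence (tightness) of the $\Prob_\rho$ near the cusp, one secures uniform integrability of $M_N$ and gets $\limsup_{\rho\to0}\mathbb E_\rho[M_N]=\mathbb E_\mu[M_N]\le c\,\vol(\mathfrak Z(\xi))\,\mathrm{Haar}(W_N\setminus W)\to0$ as $N\to\infty$. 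Hence $\limsup_{\rho\to0}\Prob_\rho(N_\xi=0)\le\mu(N_\xi^{(N)}=0)+o_N(1)\to D_{\mathcal P,\bm q}(\xi)$, and the two bounds give the claimed limit.

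\emph{Expected main obstacle.} The heart of the matter is the upper bound in Step 3: controlling the expected number of near-miss points \emph{uniformly in} $\rho$ as $N\to\infty$, which requires the adelic second-moment (Rogers/Siegel) formula together with quantitative non-divergence of the equidistributing orbits --- precisely the ``probabilistic argument for characteristic functions of compact sets with positive measure and empty interior'' advertised in the abstract. By comparison, the geometric reduction of Step 1, the boundary bookkeeping for $\mathfrak Z(\xi)$, and the incorporation of the real shift $\bm q$ into the equidistribution statement via the affine group are routine once the adelic equidistribution theorem is available.
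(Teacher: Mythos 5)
Your proposal is correct and follows essentially the same route as the paper: reduce to counting points of the adelic model set in a fixed cylinder, apply the spherical equidistribution theorem on the space of (affine) adelic lattices, and handle the empty-interior window by an outer clopen approximation $\mathcal{W}^\varepsilon \supset \mathcal{W}$ whose overcount is controlled by a first-moment bound, uniform in $\rho$, together with Markov's inequality. The paper's Section 7 obtains that uniform moment bound directly from the asymptotic density of the near-miss point set in physical space (and transfers it from $\mathrm{Leb}$ to general $\lambda$ by a Radon--Nikodym splitting), rather than via uniform integrability of the equidistributing family, but this is the same probabilistic argument you identify as the heart of the matter.
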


We can actually derive an explicit formula for the limiting distribution in terms of random adelic lattices. This is stated and proved in \autoref{appfpl} for the general case of ``arithmetic cut-and-project sets''.

Following the exploration started by Baake, G\"otze, Huck and Jakobi \cite{baake_gotze_huck_jakobi_2014} for certain mathematical quasicrystals, one may also ask about the local statistics of directions in our point set $\mathcal{P}$, and this is the content of our second result.

To place ourselves in their context we further restrict to $d=2$ for now, but prove a general result, valid for $d \ge 2$, in \autoref{directions}.
Consider then a point $\bm \xi \in \mathbb{R}^2$ and look at the set of points of $\mathcal{P} + \bm \xi$ inside the open disc of radius $T > 0$ centred at the origin. Denote this set minus the origin by $\mathcal{P}_T$.
As $\bm x$ ranges through $\mathcal{P}_T$, we are interested in the distribution of $\frac {\bm x}{\| \bm x \|_2}$, counted with multiplicity.
For each $T$, this produces a sequence of $N = N(T)$ angles $\alpha_j = \alpha_j(T) \in \mathbb{T} = \mathbb{R}/\mathbb{Z}, \, j \in \{1, \ldots, N\}$.
Given $I \subset \mathbb{T}$ and $\alpha \in \mathbb{T}$ chosen uniformly at random, we look at the number of angles falling into a small interval randomly shifted by $\alpha$:
\begin{equation} \mathcal{N}_T(I, \alpha) = \# \left\{ j \le N \, : \, \alpha_j \in \frac 1N I + \alpha \right\}. \end{equation}

We may now state:
\begin{thm} \label{introdir}
For every $\bm \xi \in \mathbb{R}^2$, every $I \subset \mathbb{T}$ and every $\alpha$ distributed uniformly at random in $\mathbb{T}$, the random variable $\mathcal{N}_T(I, \alpha)$ has a limiting distribution as $T \to +\infty$. \end{thm}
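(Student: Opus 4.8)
The plan is to reduce the limiting-distribution statement for $\mathcal{N}_T(I,\alpha)$ to an equidistribution statement on a space of adelic lattices, following the now-standard strategy initiated by Marklof--Str\"ombergsson and adapted to the quasicrystal setting in \cite{marklof_strombergsson_quasicrystals_fpl_2014, baake_gotze_huck_jakobi_2014}. First I would recast the counting problem geometrically: the points $\bm x \in \mathcal{P}_T$ lying in a thin cone around direction $\alpha$ correspond, after applying a rotation $k(\alpha) \in \mathrm{SO}(2)$ and the diagonal ``stretching'' element $a_T = \diag(T, T^{-1})$ (suitably normalised so that $N \asymp T^2$), to points of a renormalised, rotated copy of the model set that fall into a fixed box. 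Since $\mathcal{P}$ is a weak adelic model set, this renormalised copy is parametrised by a point in the homogeneous space $X = \mathrm{SL}(2,\R) \times \mathrm{SL}(2,\mathbb{A}_f) / \Gamma$ (or the appropriate $S$-arithmetic quotient), and the relevant orbit $\{a_T n(\alpha)\}$ as $\alpha$ varies is (a piece of) an expanding horosphere translate in the real factor, with the adelic factor held fixed at the data defining $\mathcal{P} + \bm\xi$.

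The key steps, in order, are: (i) show that $\mathcal{N}_T(I,\alpha) = \#\bigl(\Lambda_{T,\alpha} \cap \mathfrak{C}(I)\bigr)$ for an explicit test region $\mathfrak{C}(I) \subset \R^2 \times \mathbb{A}_f^2$ that is a product of a fixed bounded region in $\R^2$ with the window of the model set, and where $\Lambda_{T,\alpha}$ is the adelic lattice attached to the point $a_T k(\alpha) \cdot g_{\bm\xi}$ of $X$; (ii) observe that as $\alpha$ ranges uniformly over $\mathbb{T}$, the points $a_T k(\alpha) \cdot g_{\bm\xi}$ equidistribute, as $T \to \infty$, with respect to the Haar probability measure $\mu_X$ on $X$ (or on the relevant closed orbit) --- this is exactly the spherical/horospherical equidistribution result for adelic lattices the paper sets up in its earlier sections, so I would invoke it directly; (iii) conclude that for any $m \ge 0$,
\begin{equation*}
\Prob\bigl(\mathcal{N}_T(I,\alpha) = m\bigr) \;\longrightarrow\; \mu_X\bigl(\{ g \in X : \#(\Lambda_g \cap \mathfrak{C}(I)) = m \}\bigr)
\end{equation*}
provided the function $g \mapsto \#(\Lambda_g \cap \mathfrak{C}(I))$ is continuous $\mu_X$-almost everywhere, which gives a limiting distribution expressed as a distribution of a random variable on the space of random adelic lattices. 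A Portmanteau/weak-convergence argument then upgrades convergence of the individual probabilities to convergence in distribution.

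The main obstacle is step (iii), and this is precisely the ``technical difficulty'' flagged in the abstract: the test region $\mathfrak{C}(I)$ involves the characteristic function of the window $W \subset \mathbb{A}_f^2$, which in the weak model set case is a compact set with empty interior (for $\mathcal{P}$, $W$ is the set of primitivity conditions, measure $1/\zeta(d)$ but no interior). Hence $\mathbf{1}_{\mathfrak{C}(I)}$ is not Riemann integrable in the adelic variable and the counting function $g \mapsto \#(\Lambda_g \cap \mathfrak{C}(I))$ need not be a.e.\ continuous on $X$; a naive sandwiching between continuous functions from above and below loses a positive-measure gap. The remedy --- the ``probabilistic argument'' the abstract promises --- is to exploit randomness in a second parameter (the shift $\alpha$, or an auxiliary randomisation of the window boundary): one shows that for Lebesgue-a.e.\ realisation the boundary of the relevant region is not hit, so that the discontinuity set of the counting function is a $\mu_X$-null set after averaging, using that the distribution of $\alpha$ is absolutely continuous and the fibrewise measure of the ``bad'' set of boundary-hitting configurations is zero. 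I would carry this out by writing the number of $\alpha$ for which some lattice point lands exactly on $\partial\mathfrak{C}(I)$ as a measure-zero event, invoking Fubini, and thereby concluding the a.e.\ continuity needed to apply the equidistribution theorem. The remaining steps --- verifying that $N(T)/T^2$ converges (so the normalisation $\tfrac1N I$ is legitimate), controlling the contribution of points near $\|\bm x\| = T$ (an escape-of-mass / boundary-of-ball estimate), and checking tightness so that no mass escapes to $m = \infty$ --- are routine given the geometry of numbers for adelic lattices and the non-divergence estimates in the homogeneous space, and I would treat them briefly.
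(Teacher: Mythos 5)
Your overall architecture --- recasting $\mathcal{N}_T(I,\alpha)$ as a lattice-point count in a fixed test region against a rotated and dilated adelic lattice, and then invoking spherical equidistribution on the adelic homogeneous space --- is exactly the paper's. The gap is in your step (iii), at precisely the point you yourself flag as the main obstacle. Your proposed remedy is to show that for almost every $\alpha$ ``the boundary of the relevant region is not hit,'' via Fubini, so that the counting function is $\mu_X$-a.e.\ continuous. This cannot work. For the primitive lattice points the window is $\mathcal{W}=\prod_p(\mathbb{Z}_p^2\setminus p\mathbb{Z}_p^2)$, which is compact with empty interior, so $\partial\mathcal{W}=\mathcal{W}$ and $m_f(\partial\mathcal{W})=1/\zeta(2)>0$. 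Hence $\partial\mathfrak{C}(I)$ contains $(\text{fixed real region})\times\mathcal{W}$ and has \emph{positive} Haar measure in $\mathbb{A}^2$; by the Siegel--Weil formula the expected number of lattice points lying on $\partial\mathfrak{C}(I)$ equals $m(\partial\mathfrak{C}(I))>0$, so the ``bad'' event has probability bounded below, not zero --- indeed every point you are counting is a boundary point. Averaging over $\alpha$ cannot repair this, because the rotation and dilation act only on the archimedean coordinate while the obstruction lives entirely in the finite-adelic coordinate, which is untouched as $\alpha$ varies.

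What the paper actually does is an outer approximation rather than an a.e.-continuity argument: replace $\mathcal{W}$ by the clopen set $\mathcal{W}^\varepsilon$ obtained by imposing the congruence conditions at only finitely many primes, so that $\partial\mathcal{W}^\varepsilon=\emptyset$, the main equidistribution theorem applies directly to the enlarged count $\mathcal{N}_t^\varepsilon$, and the enlarged point set $\mathcal{P}^\varepsilon\supset\mathcal{P}$ has asymptotic density at most $\varepsilon$ larger than that of $\mathcal{P}$. A first-moment computation (using the density asymptotics together with a Radon--Nikodym splitting of $\lambda$ and Markov's inequality) then gives $\lambda(\mathcal{N}_t^\varepsilon\neq\mathcal{N}_t)\le\lambda(\mathcal{N}_t^\varepsilon-\mathcal{N}_t\ge 1)=O(\sqrt{\varepsilon})$ \emph{uniformly in} $t$, and combined with tightness of $(\mathcal{N}_t)_{t>0}$ (Prokhorov, again via a uniform first-moment bound) this transfers convergence in distribution from $\mathcal{N}_t^\varepsilon$ to $\mathcal{N}_t$ upon letting $\varepsilon\to 0$. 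That approximation-plus-Markov argument is the ``probabilistic argument'' promised in the abstract, and it is the ingredient your proposal is missing; without it, or some substitute for it, the reduction to the equidistribution theorem does not close.
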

This is in fact a special case of \autoref{dir}. As with the limiting distribution for the free path length, we also obtain an explicit formula for the limiting distribution in this case.

We note that by a general argument, presented in \cite[Section 2.1]{marklof_survey}, this implies the existence of a limiting gap distribution for the angles $(\alpha_j)$, meaning for every $x \ge 0$, the quantity 
\begin{equation} \label{defgapdist} \frac 1N \# \left\{ 1 \le j \le N \, : \, \alpha'_{j+1} - \alpha'_j \le \frac xN \right\} 
\end{equation}
converges as $N \to \infty$, where $(\alpha'_j)_{1 \le j \le N}$ is the non-decreasing reordering of $(\alpha_j)_{1 \le j \le N}$.

\begin{figure}[h]
	\centering
	{
		\includegraphics[width=80mm]{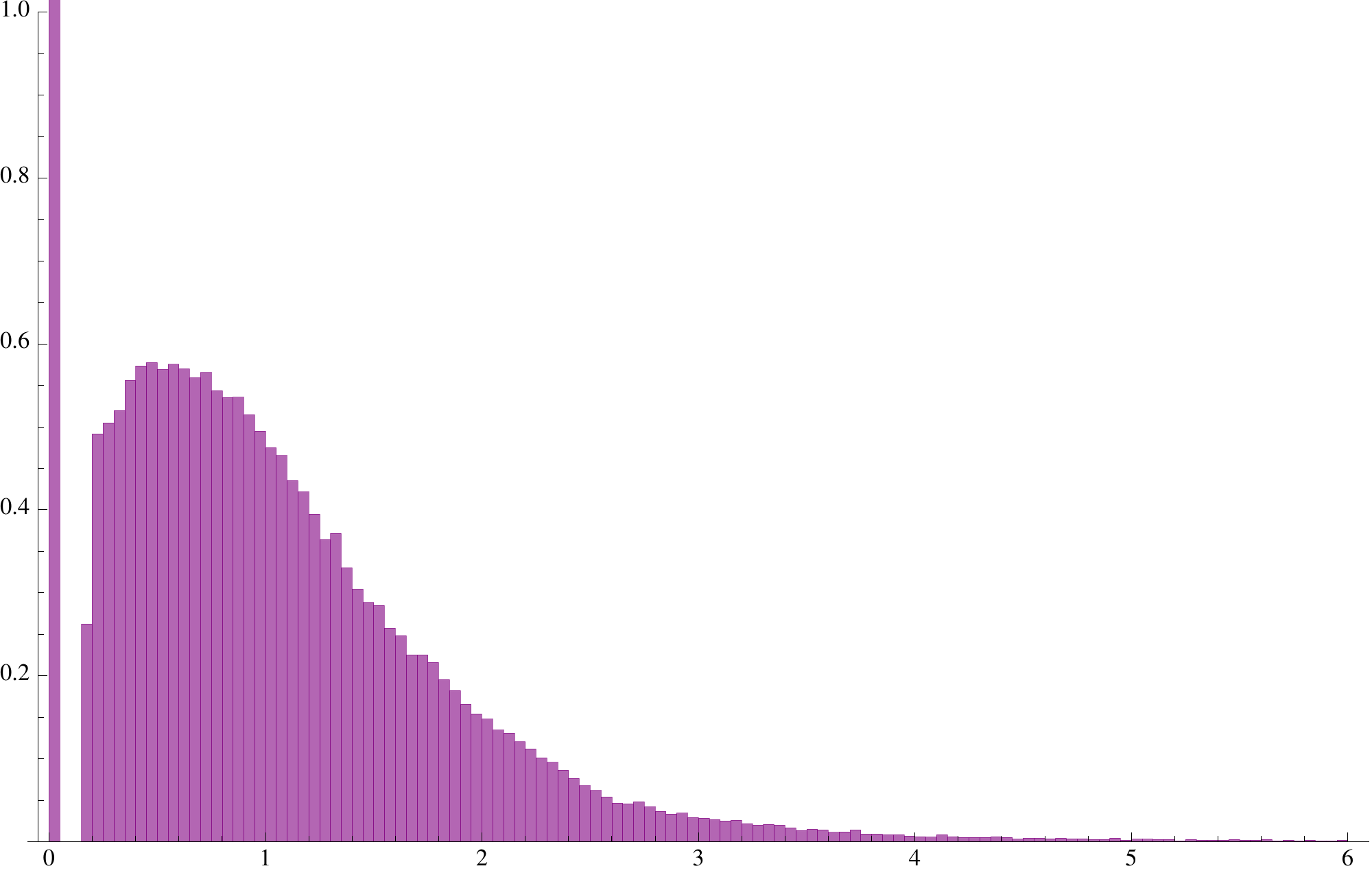}
	}
	\hspace{0mm}
	{
		\includegraphics[width=80mm]{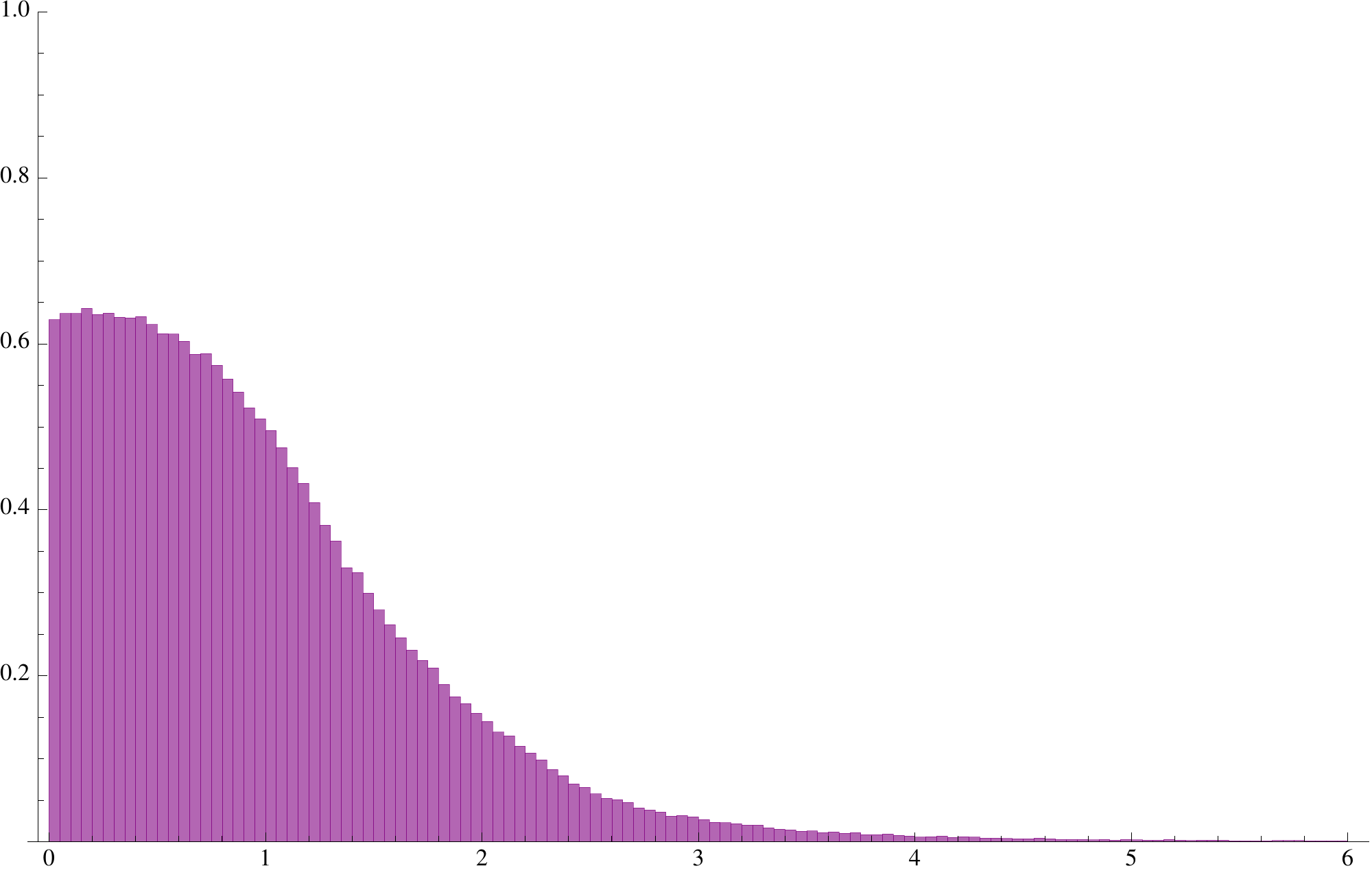}
	}
	\caption{The limiting gap distributions corresponding to the primitive lattice points shifted by $\left(\frac 12, \frac 12 \right)$ (top) and the primitive lattice points shifted by $(\sqrt{2}, \sqrt{3})$ (bottom), within a disc of radius $500$.}
\end{figure}

We should also point out that when $\bm \xi = \bm 0$, the local statistics of the primitive lattice points are intimately connected to the statistics of the classical Farey fractions, whose gap distribution was studied by Hall in 1970 \cite{Hall70}. The existence of a limiting gap distribution for the primitive lattice points was proved by Boca, Cobeli and Zaharescu in 2000 \cite{BCZ2000}.

\subsection{Main theorem}
We proceed to describe our main theorem, which asserts the existence of a limiting distribution for certain random point processes.
It implies \autoref{introfpl} and \autoref{introdir}, as shown in \autoref{apps}.

In what follows, $\delta$ denotes the diagonal embedding of $\mathbb{Q}$ into $\mathbb{A}$, 
\begin{align*} \delta \colon \mathbb{Q} &\hookrightarrow \mathbb{A} \\
x &\mapsto (x, x, x, \ldots), \end{align*}
while $\iota$ denotes the embedding of $\mathbb{R}$ into $\mathbb{A}$ through the first coordinate
\begin{align*} \iota \colon \mathbb{R} &\hookrightarrow \mathbb{A} \\
x &\mapsto (x, 0, 0, \ldots). \end{align*} 
Abusing notation, we shall also denote by $\iota$ the embedding of $\mathrm{SL}_d(\mathbb{R})$ into $\mathrm{SL}_d(\mathbb{A})$ via the first coordinate, \begin{align*} \iota \colon \mathrm{SL}_d(\mathbb{R}) &\hookrightarrow \mathrm{SL}_d(\mathbb{A}) \\
M &\mapsto (M, I_d, I_d, \ldots) \end{align*}
and by $\delta$ the diagonal embedding of $\mathrm{SL}_d(\mathbb{Q})$ into $\mathrm{SL}_d(\mathbb{A})$,
\begin{align*} \delta \colon \mathrm{SL}_d(\mathbb{Q}) &\hookrightarrow \mathrm{SL}_d(\mathbb{A}) \\
M &\mapsto (M, M, M, \ldots). \end{align*}

For a ring $R$, we define $\mathrm{ASL}_d(R) = \mathrm{SL}_d(R) \ltimes R^d$.
As above, we view $\mathrm{ASL}_d(\mathbb{R})$ as embedded into $\mathrm{ASL}_d(\mathbb{A})$ through the first coordinate and $\mathrm{ASL}_d(\mathbb{Q})$ diagonally, once again denoting those embeddings by $\iota$ and $\delta$.
The multiplication law on $\mathrm{ASL}$ is defined (thinking of vectors as row vectors) by $(M, \vecxi)(M', \vecxi') = (MM', \vecxi M' + \vecxi')$.

For $t >0$, we define $\Phi^t = \operatorname{diag}(e^{-(d-1)t}, e^t, \ldots, e^t)$.
We also fix a map $K \colon S^{d-1} \to \mathrm{SO}(d)$ satisfying \begin{equation} \forall \bm v \in S^{d-1}, \, \bm v K(\bm v) = (1, 0, \ldots, 0) \end{equation} and which is smooth on the sphere minus a point (an explicit example of such a map is given in \cite[Footnote 3, p. 1968]{marklof_strombergsson_free_path_length_2010}).

\begin{thm} \label{mainthm}
Let $\mathcal{A} \subset \mathbb{A}^d$ be a bounded Borel set with $m(\partial \mathcal{A}) = 0$. 
For every $\bm \alpha \in \mathbb{R}^d$, every $M \in \mathrm{SL}_d(\mathbb{A})$, every $r \in \mathbb{Z}_{\ge 0}$ and every Borel probability measure $\lambda$ on $S^{d-1}$ which is absolutely continuous with respect to the Lebesgue measure, 
\[
\lambda(\{ \bm v \in S^{d-1} \, : \, \#(\mathcal{A} \iota((\Phi^{-t}, \mathbf{0}) (K(\bm v), \mathbf{0}) (I_d, \bm \alpha)) \cap \delta(\mathbb{Q})^d M) \ge r \}) \] has a limit when $t$ tends to infinity and it is given by 
\[ \mu_{\mathrm{ASL}_d(\mathbb{Q}) \backslash \mathrm{ASL}_d(\mathbb{A}) }( \{ g \in \mathrm{ASL}_d(\mathbb{Q}) \backslash \mathrm{ASL}_d(\mathbb{A}) \, : \, \# (\mathcal{A} \cap \delta(\mathbb{Q})^d g) \ge r \} ) 
\] if $\bm \alpha \in \mathbb{R}^d \setminus \mathbb{Q}^d$
and by
\[
\mu_{\mathrm{SL}_d(\mathbb{Q}) \backslash \mathrm{SL}_d(\mathbb{A})}( \{ g \in \mathrm{SL}_d(\mathbb{Q}) \backslash \mathrm{SL}_d(\mathbb{A}) \, : \, \# (\mathcal{A} - (\bm 0, \bm \beta M_f) \cap \delta(\mathbb{Q})^d g \setminus \{ \mathbf{0} \}) \ge r \} ) \] if $\bm \alpha \in \mathbb{Q}^d$, where $\bm \beta = -\delta(\bm \alpha)_f$  \end{thm}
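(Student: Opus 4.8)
The plan is to rewrite the cardinality in the statement as the value of a fixed bounded function on an adelic homogeneous space, evaluated along an expanding translated horosphere, and then to combine the spherical equidistribution of such horospheres with a Portmanteau-type argument made legitimate by the hypothesis $m(\partial\mathcal{A})=0$. First I would set $g=\iota((\Phi^{-t},\mathbf{0})(K(\bm v),\mathbf{0})(I_d,\bm\alpha))=\iota((\Phi^{-t}K(\bm v),\bm\alpha))$ and use the cardinality-preserving bijection $\mathcal{A}g\cap\Lambda\leftrightarrow\mathcal{A}\cap\Lambda g^{-1}$, valid for any $\Lambda\subset\mathbb{A}^d$, to get $\#(\mathcal{A}g\cap\delta(\mathbb{Q})^dM)=\#(\mathcal{A}\cap\delta(\mathbb{Q})^dMg^{-1})$. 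Since $\delta(\mathbb{Q})^d$ is invariant under the right action of $\mathrm{ASL}_d(\mathbb{Q})$, the function $F_r(h)=\mathbf{1}[\#(\mathcal{A}\cap\delta(\mathbb{Q})^dh)\ge r]$ descends to $X:=\mathrm{ASL}_d(\mathbb{Q})\backslash\mathrm{ASL}_d(\mathbb{A})$ and $\#(\mathcal{A}g\cap\delta(\mathbb{Q})^dM)=F_r(Mg^{-1})$, so that the left-hand side in the statement equals $\int_{S^{d-1}}F_r(h_t(\bm v))\,d\lambda(\bm v)=\int_X F_r\,d((h_t)_*\lambda)$ with $h_t(\bm v):=Mg^{-1}$.

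Next I would identify the family traced by $h_t(\bm v)$ as $\bm v$ varies. Computing the inverse in $\mathrm{ASL}_d$ and multiplying out $M$, one sees that $h_t(\bm v)$ runs over an expanding translated horosphere of exactly the kind controlled by the paper's spherical equidistribution theorem; applied to the absolutely continuous $\lambda$ (the reduction from smooth densities to arbitrary absolutely continuous $\lambda$ being by $L^1$-approximation), that theorem gives $(h_t)_*\lambda\Rightarrow\mu_X$ weakly, where the target quotient $X$ and measure $\mu_X$ depend on $\bm\alpha$. When $\bm\alpha\in\mathbb{R}^d\setminus\mathbb{Q}^d$ the surviving translation is generic --- no rational translation absorbs it --- so $X=\mathrm{ASL}_d(\mathbb{Q})\backslash\mathrm{ASL}_d(\mathbb{A})$ with its Haar probability measure, and there is no shift or deletion. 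When $\bm\alpha\in\mathbb{Q}^d$, however, $\delta((I_d,\bm\alpha))\in\mathrm{ASL}_d(\mathbb{Q})$: left-multiplying by it, the translation survives only through its finite-adelic part --- the discrepancy between the diagonal and first-coordinate embeddings of $\bm\alpha$, transported through $M$ --- so the orbit collapses into a translated expanding sphere in $X=\mathrm{SL}_d(\mathbb{Q})\backslash\mathrm{SL}_d(\mathbb{A})$, the counting function becomes $\#((\mathcal{A}-(\mathbf{0},\bm\beta M_f))\cap\delta(\mathbb{Q})^dg)$ with $\bm\beta=-\delta(\bm\alpha)_f$, and since $\delta(\mathbb{Q})^dg\ni\mathbf{0}$ for every $g\in\mathrm{SL}_d(\mathbb{A})$ one counts in $\delta(\mathbb{Q})^dg\setminus\{\mathbf{0}\}$ to discard a contribution that is otherwise present or absent uniformly in $g$ (the analogue of removing the scatterer at the base point in the Lorentz-gas picture).

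It then remains to check that the relevant counting function $F_r$ (and likewise its $\mathrm{SL}_d$-analogue, to which the same reasoning applies) may be integrated against the weak limit, i.e.\ that it is $\mu_X$-almost everywhere continuous; this is where $m(\partial\mathcal{A})=0$ is used. Since $\mathcal{A}$ is bounded and $\delta(\mathbb{Q})^dh$ is a lattice in $\mathbb{A}^d$, the set $\overline{\mathcal{A}}\cap\delta(\mathbb{Q})^dh$ is finite for every $h$, with cardinality locally bounded in $h$; if moreover $\delta(\mathbb{Q})^dh_0\cap\partial\mathcal{A}=\emptyset$ then, by discreteness and compactness, for $h$ near $h_0$ the finitely many relevant lattice points move continuously and remain off $\partial\mathcal{A}$, so the count is locally constant and $F_r$ is continuous at $h_0$. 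The exceptional set $\{h:\delta(\mathbb{Q})^dh\cap\partial\mathcal{A}\ne\emptyset\}$ is contained in the countable union $\bigcup_{\bm q}\{h:\bm q h\in\partial\mathcal{A}\}$ over $\bm q\in\mathbb{Q}^d$ (with $\bm q\ne\mathbf{0}$ in the $\mathrm{SL}_d$ case, $\mathbf{0}$ having been removed), and each term lifts to a null subset of the group: in the affine case by Fubini for the decomposition of Haar measure on $\mathrm{SL}_d(\mathbb{A})\ltimes\mathbb{A}^d$ together with $m(\partial\mathcal{A})=0$, in the linear case because for $\bm q\ne\mathbf{0}$ the orbit map $h\mapsto\bm q h$ is a submersion onto a conull subset of $\mathbb{A}^d$. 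Hence the exceptional set is $\mu_X$-null, and the Portmanteau theorem for bounded almost everywhere continuous functions (which needs no compactness of $X$, only that the $(h_t)_*\lambda$ and $\mu_X$ be probability measures) yields $\int_X F_r\,d((h_t)_*\lambda)\to\int_X F_r\,d\mu_X$, which is precisely the asserted limit in each case.

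The deepest ingredient is the spherical equidistribution theorem for expanding translated horospheres, which is established separately; granting it, the hard part of the present deduction is the bookkeeping of the first two paragraphs --- keeping the action conventions straight (row vectors, the $\mathrm{ASL}_d$ multiplication law, left versus right actions, inverses, whether $M$ acts before or after $g$) so as to match $h_t(\bm v)$ with a translated horosphere in precisely the normalisation of the equidistribution theorem, and, in the rational case, pinning down the frozen shift $(\mathbf{0},\bm\beta M_f)$ and the deletion of $\mathbf{0}$. The separate difficulty of windows that are compact, of positive measure, and with empty interior --- for which $m(\partial\mathcal{A})>0$ and the Portmanteau step fails --- does not arise here, since Theorem~\ref{mainthm} assumes $m(\partial\mathcal{A})=0$; it is dealt with later, by a probabilistic sandwiching argument, when the applications are derived.
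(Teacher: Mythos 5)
Your proposal is correct and follows essentially the same route as the paper: reduce the count to a bounded function on the adelic homogeneous space, invoke the spherical equidistribution theorem (Theorem~\ref{asladelicsphequid}), and dispose of the boundary via the Siegel--Weil formula. The only cosmetic difference is that you package the final step as portmanteau for bounded $\mu$-a.e.\ continuous functions, whereas the paper sandwiches the counting set between the open set $[\accentset{\circ}{\mathcal{A}}]_{\ge r}$ and the closed set $[\overline{\mathcal{A}}]_{\ge r}$ (Lemmas~\ref{countprop} and~\ref{countpropasl}, Corollary~\ref{limsupliminfaslrot}) and shows their measures agree because $(\partial\mathcal{A})_{\ge 1}$ is null --- an equivalent formulation of the same argument.
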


The space of adelic lattices can be viewed as a fibre bundle over the space of lattices, or in other words a space of ``marked'' lattices as coined by Marklof and Vinogradov in \cite{MarklofVinogradov2017}. In that setting they prove a spherical equidistribution result for every point in the base, but only almost every point in the fibre. However, the above \autoref{mainthm} holds for every point. \\

\textbf{Acknowledgement:} I would like to thank my PhD adviser, Prof Jens Marklof, for suggesting I investigate these questions as well as for his continued support and guidance.

\section{Preliminaries}
As mentioned in the case of the $p$-adic integers, a description as an inverse limit can sometimes be a convenient tool to reduce a problem to (possibly) many hopefully easier ones.
We give such a description here for the matrix rings discussed above.
In this section, we give a description of the relevant matrix rings for our purposes as an inverse limit. This description is a convenient reduction tool, as we demonstrate in \autoref{equid}.

\subsection{The special linear group}
For $N \in \mathbb{N}$, let $\Gamma(N) \subset \mathrm{SL}_d(\mathbb{Z})$ denote a principal congruence subgroup of $\mathrm{SL}_d(\mathbb{Z})$, i.e. $\Gamma(N) = \ker( \mathrm{SL}_d(\mathbb{Z}) \twoheadrightarrow \mathrm{SL}_d(\mathbb{Z}/N \mathbb{Z}))$.

The following statement is at the heart of the modern, adelic theory of automorphic representations, as presented in Gelbart's book \cite{Gelbart} for instance. It is not stated the way we do in that book, with the most closely-related result we were able to find in the classical literature being Proposition 3.3.1 in Bump's book \cite{Bump}.

\begin{lemma}\label{SLhomeo}
There is a homeomorphism 
\begin{equation} \mathrm{SL}_d(\mathbb{Q}) \backslash \mathrm{SL}_d(\mathbb{A}) \cong \varprojlim_{N \in \mathbb{N}} \Gamma(N) \backslash \mathrm{SL}_d(\mathbb{R}). \end{equation}
\end{lemma}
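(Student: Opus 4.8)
The plan is to reduce the adelic quotient to a tower of classical quotients via strong approximation and then pass to the inverse limit formally. Write $\mathrm{SL}_d(\widehat{\mathbb{Z}}) = \prod_p \mathrm{SL}_d(\mathbb{Z}_p) \subset \mathrm{SL}_d(\mathbb{A}_f)$, a compact open subgroup, and for $N \in \mathbb{N}$ put $K(N) = \ker\bigl(\mathrm{SL}_d(\widehat{\mathbb{Z}}) \twoheadrightarrow \mathrm{SL}_d(\mathbb{Z}/N\mathbb{Z})\bigr)$, so that the $K(N)$ form a neighbourhood basis of the identity, $\mathrm{SL}_d(\widehat{\mathbb{Z}}) = \varprojlim_N \mathrm{SL}_d(\mathbb{Z}/N\mathbb{Z})$ as profinite groups, and $\bigcap_N K(N) = \{e\}$. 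Strong approximation for $\mathrm{SL}_d$ (a simply connected group with $\mathrm{SL}_d(\mathbb{R})$ non-compact, so $\mathrm{SL}_d(\mathbb{Q})$ is dense in $\mathrm{SL}_d(\mathbb{A}_f)$; equivalently $\mathrm{SL}_d/\mathbb{Q}$ has class number one), which I would quote from \cite{Bump}, gives $\mathrm{SL}_d(\mathbb{A}) = \delta(\mathrm{SL}_d(\mathbb{Q})) \cdot \bigl(\iota(\mathrm{SL}_d(\mathbb{R})) \cdot \mathrm{SL}_d(\widehat{\mathbb{Z}})\bigr)$. Combining this with the elementary identity $\delta(\mathrm{SL}_d(\mathbb{Q})) \cap \bigl(\iota(\mathrm{SL}_d(\mathbb{R})) \cdot \mathrm{SL}_d(\widehat{\mathbb{Z}})\bigr) = \delta(\mathrm{SL}_d(\mathbb{Z}))$ (a rational matrix that is integral at every finite place is integral) yields a continuous bijection
\[ \mathrm{SL}_d(\mathbb{Z}) \backslash \bigl(\mathrm{SL}_d(\mathbb{R}) \times \mathrm{SL}_d(\widehat{\mathbb{Z}})\bigr) \longrightarrow \mathrm{SL}_d(\mathbb{Q}) \backslash \mathrm{SL}_d(\mathbb{A}), \]
which is open, hence a homeomorphism, because $\iota(\mathrm{SL}_d(\mathbb{R})) \cdot \mathrm{SL}_d(\widehat{\mathbb{Z}})$ is open in $\mathrm{SL}_d(\mathbb{A})$ and the projection $\mathrm{SL}_d(\mathbb{A}) \to \mathrm{SL}_d(\mathbb{Q})\backslash\mathrm{SL}_d(\mathbb{A})$ is an open map.

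Next I would handle each finite level. For fixed $N$, the group $\mathrm{SL}_d(\mathbb{Z})$ acts on the finite set $\mathrm{SL}_d(\mathbb{Z}/N\mathbb{Z})$ by left translation through the reduction map; this action is transitive (reduction $\mathrm{SL}_d(\mathbb{Z}) \twoheadrightarrow \mathrm{SL}_d(\mathbb{Z}/N\mathbb{Z})$ is surjective) with stabiliser of the identity coset exactly $\Gamma(N)$. A routine ``induced-space'' argument therefore produces a homeomorphism
\[ \mathrm{SL}_d(\mathbb{Z}) \backslash \bigl(\mathrm{SL}_d(\mathbb{R}) \times \mathrm{SL}_d(\mathbb{Z}/N\mathbb{Z})\bigr) \;\cong\; \Gamma(N) \backslash \mathrm{SL}_d(\mathbb{R}), \]
and these are compatible with transition maps on both sides: the reduction $\mathrm{SL}_d(\mathbb{Z}/M\mathbb{Z}) \to \mathrm{SL}_d(\mathbb{Z}/N\mathbb{Z})$ on the left, and the covering $\Gamma(M)\backslash\mathrm{SL}_d(\mathbb{R}) \to \Gamma(N)\backslash\mathrm{SL}_d(\mathbb{R})$ on the right, for $N \mid M$. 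Chaining this with the first display reduces the lemma to showing that the natural continuous map
\[ \mathrm{SL}_d(\mathbb{Z}) \backslash \bigl(\mathrm{SL}_d(\mathbb{R}) \times \mathrm{SL}_d(\widehat{\mathbb{Z}})\bigr) \longrightarrow \varprojlim_N \; \mathrm{SL}_d(\mathbb{Z}) \backslash \bigl(\mathrm{SL}_d(\mathbb{R}) \times \mathrm{SL}_d(\mathbb{Z}/N\mathbb{Z})\bigr) \]
is a homeomorphism.

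That this map is a bijection I would verify by hand. Injectivity uses $\bigcap_N K(N) = \{e\}$ together with the finiteness of the $\mathrm{SL}_d(\mathbb{Z})$-stabiliser of any point of $\mathrm{SL}_d(\mathbb{R})$: if two points of the source agree in every quotient, a compatible family of $\mathrm{SL}_d(\mathbb{Z})$-elements witnessing equivalence modulo $K(N)$ takes values in such a finite set for all large $N$, hence is eventually a single element $\gamma$, and $\gamma$ witnesses a genuine equivalence. Surjectivity follows from a finite-intersection-property argument: the fibres of $\mathrm{SL}_d(\mathbb{R}) \times \mathrm{SL}_d(\widehat{\mathbb{Z}}) \to \mathrm{SL}_d(\mathbb{R}) \times \mathrm{SL}_d(\mathbb{Z}/N\mathbb{Z})$ are compact and nested ($K(M) \subseteq K(N)$ for $N \mid M$), so a compatible family of points in the quotients lifts to a single point upstairs.

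The single genuinely nontrivial input is strong approximation; the rest is formal, and the part requiring the most care is purely topological — checking that the continuous bijections are homeomorphisms. One cannot invoke the ``continuous bijection out of a compact space'' shortcut, since $\mathrm{SL}_d(\mathbb{Q})\backslash\mathrm{SL}_d(\mathbb{A})$ is not compact for $d \ge 2$. Instead I would prove openness directly: sets of the form $\mathrm{SL}_d(\mathbb{Z}) \backslash\, \mathrm{SL}_d(\mathbb{Z}) \cdot \bigl(V_\infty \times \kappa\, K(N)\bigr)$, with $V_\infty \subset \mathrm{SL}_d(\mathbb{R})$ open, $\kappa \in \mathrm{SL}_d(\widehat{\mathbb{Z}})$, and $N \in \mathbb{N}$, form a basis for the topology of the source (because the $K(N)$ are a neighbourhood basis at $e$ and quotient maps by group actions are open), and each such set has open image in the inverse limit, its preimage in the level-$M$ quotient (for $N \mid M$) being the open set cut out by $V_\infty$ and finitely many $K(M)$-cosets.
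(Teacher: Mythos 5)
Your proof is correct and follows essentially the same route as the paper: strong approximation for $\mathrm{SL}_d$ plus the identification $\mathrm{SL}_d(\mathbb{Q})\backslash\mathrm{SL}_d(\mathbb{A})/K(N)\cong\Gamma(N)\backslash\mathrm{SL}_d(\mathbb{R})$ at each finite level, followed by passage to the inverse limit. The only difference is one of detail: you apply strong approximation once at the level of $\mathrm{SL}_d(\widehat{\mathbb{Z}})$ and then spell out the bijectivity and openness of the map onto $\varprojlim_N$, a step the paper's proof leaves implicit after establishing the finite-level identifications.
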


\begin{proof}
First, $(\Gamma(N) \backslash \mathrm{SL}_d(\mathbb{R}))_{N \in \mathbb{N}}$ is a projective system (of locally compact Hausdorff spaces): indeed, $M \mid N \implies \Gamma(N) \subset \Gamma(M)$.
Define, for $N \in \mathbb{N}$, the following compact open subgroups of $\mathrm{SL}_d(\mathbb{A}_f)$:
\begin{equation} K(N) = \ker(\mathrm{SL}_d(\widehat{\mathbb{Z}}) \to \mathrm{SL}_d(\mathbb{Z}/N \mathbb{Z})). \end{equation}

By the strong approximation theorem for $\mathrm{SL}_d$ --- whose  proof is elementary and already contained in Bourbaki's book on commutative algebra \cite[$\S 2, n^\circ 3$, Prop 4]{BourbakiAlgComm} ---  it follows that for every open subgroup $K$ of $\mathrm{SL}_d(\mathbb{A})$ and every place $p$ we have $\mathrm{SL}_d(\mathbb{Q})\mathrm{SL}_d(\mathbb{Q}_p)K = \mathrm{SL}_d(\mathbb{A})$.
In particular, we take $p = \infty$ and $K = \mathrm{SL}_d(\mathbb{R}) \times \prod_{p < \infty} K_p'$ where $K_p'$ is an open (finite index) subgroup of $K_p = \mathrm{SL}_d(\mathbb{Z}_p)$ and $K_p' = K_p$ for all but finitely many $p$.

More precisely, for $N \in \mathbb{N}$ with prime factorisation $N = \prod_{i = 1}^r p_i^{\alpha_i}$, take 
\begin{align*} \forall i \in \{1, \ldots, r\}, K_{p_i}' &= \ker(\mathrm{SL}_d(\mathbb{Z}_{p_i}) \to \mathrm{SL}_d(\mathbb{Z}_{p_i} / p_i^{\alpha_i} \mathbb{Z}_{p_i})), \\
 \text{for } p \ne p_i, K_p' &= K_p, \\
 K^N &= \mathrm{SL}_d(\mathbb{R}) \times \prod_{p < \infty} K_p' \\
\text{and } K(N) &= \prod_{p < \infty} K_p'. \end{align*}
We then have \begin{equation} \mathrm{SL}_d(\mathbb{A}) = \mathrm{SL}_d(\mathbb{Q}) \mathrm{SL}_d(\mathbb{R}) K(N). \end{equation}
Upon observing that $\mathrm{SL}_d(\mathbb{Q}) \cap K^N = \Gamma(N)$, this yields \begin{equation} \mathrm{SL}_d(\mathbb{Q}) \backslash \mathrm{SL}_d(\mathbb{A}) / K(N) = \Gamma(N) \backslash \mathrm{SL}_d(\mathbb{R}). \end{equation} \end{proof}

\begin{rmk} In the same way that $\mathrm{SL}_d(\mathbb{Z}) \backslash \mathrm{SL}_d(\mathbb{R})$ can be identified with the space of unimodular lattices in $\mathbb{R}^d$, $\mathrm{SL}_d(\mathbb{Q}) \backslash \mathrm{SL}_d(\mathbb{A})$ can be identified with the space of unimodular lattices in $\mathbb{A}^d$. We refer the reader to \cite[Section 0.1]{DeligneGL2} for a concise treatment of adelic lattices. \end{rmk}

\subsection{The special affine group} 
This is the corresponding statement for the affine special linear group.
\begin{lemma}\label{ASLhomeo} We have the following homeomorphism:
\begin{equation} \mathrm{ASL}_d(\mathbb{Q}) \backslash \mathrm{ASL}_d(\mathbb{A}) \cong \varprojlim_N (\Gamma(N) \ltimes N \mathbb{Z}^d) \backslash \mathrm{ASL}_d(\mathbb{R}). \end{equation}
 \end{lemma}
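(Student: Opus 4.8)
The plan is to imitate the proof of \autoref{SLhomeo} almost verbatim, the only genuinely new ingredient being strong approximation for the affine group $\mathrm{ASL}_d$, which in turn reduces to strong approximation for $\mathrm{SL}_d$ together with (additive) approximation for the translation part $\mathbb{G}_a^d$. First I would record that $(\Gamma(N) \ltimes N\mathbb{Z}^d)_{N \in \mathbb{N}}$, ordered by divisibility, really is a projective system of locally compact Hausdorff spaces: if $M \mid N$ then $\Gamma(N) \subset \Gamma(M)$ and $N\mathbb{Z}^d \subset M\mathbb{Z}^d$, and since $\mathrm{SL}_d(\mathbb{Z})$ preserves $\mathbb{Z}^d$ the set $\Gamma(N) \ltimes N\mathbb{Z}^d$ is indeed a subgroup of $\Gamma(M) \ltimes M\mathbb{Z}^d$, with the transition maps the obvious projections. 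Keeping the notation from the proof of \autoref{SLhomeo}, I would introduce the compact open subgroups of $\mathrm{ASL}_d(\mathbb{A}_f)$ given by $\widetilde{K}(N) = K(N) \ltimes (N\widehat{\mathbb{Z}})^d = \prod_{p < \infty}(K_p' \ltimes (p^{v_p(N)}\mathbb{Z}_p)^d)$, together with $\widetilde{K}^N = \mathrm{ASL}_d(\mathbb{R}) \times \widetilde{K}(N)$; these are genuine subgroups because $\mathrm{SL}_d(\widehat{\mathbb{Z}})$, hence $K(N)$, stabilises $\widehat{\mathbb{Z}}^d$ and therefore $(N\widehat{\mathbb{Z}})^d$. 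As in \autoref{SLhomeo}, everything then reduces to two assertions: the strong approximation identity $\mathrm{ASL}_d(\mathbb{A}) = \mathrm{ASL}_d(\mathbb{Q})\,\mathrm{ASL}_d(\mathbb{R})\,\widetilde{K}(N)$, and the intersection computation $\mathrm{ASL}_d(\mathbb{Q}) \cap \widetilde{K}^N = \Gamma(N) \ltimes N\mathbb{Z}^d$; from these one gets $\mathrm{ASL}_d(\mathbb{Q}) \backslash \mathrm{ASL}_d(\mathbb{A}) / \widetilde{K}(N) = (\Gamma(N) \ltimes N\mathbb{Z}^d) \backslash \mathrm{ASL}_d(\mathbb{R})$ compatibly with the transition maps, and, passing to the inverse limit over $N$ (using $\bigcap_N \widetilde{K}(N) = \{(I_d,\mathbf{0})\}$, exactly as in the implicit last step of \autoref{SLhomeo}), the claimed homeomorphism.

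The heart of the matter, and the step I expect to cost the most care, is the strong approximation identity, precisely because of the bookkeeping forced by the semidirect-product law $(M,\xi)(M',\xi') = (MM', \xi M' + \xi')$ and by the interplay between the embeddings $\iota$ and $\delta$. Given $(M, \xi) \in \mathrm{ASL}_d(\mathbb{A})$, I would first apply strong approximation for $\mathrm{SL}_d$ (the proof of \autoref{SLhomeo}) to write $M = \delta(\gamma)\,\iota(m)\,\kappa$ with $\gamma \in \mathrm{SL}_d(\mathbb{Q})$, $m \in \mathrm{SL}_d(\mathbb{R})$, and $\kappa \in K(N)$ sitting in $\mathrm{SL}_d(\mathbb{A})$ with trivial archimedean component, and then seek $w \in \mathbb{Q}^d$, $u \in \mathbb{R}^d$, $z \in (N\widehat{\mathbb{Z}})^d$ with $(M,\xi) = (\delta(\gamma), \delta(w))(\iota(m), \iota(u))(\kappa, z)$. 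Expanding the right-hand side, the $\mathrm{SL}_d$-part is automatically $M$, and one is left to solve $z = \xi\kappa^{-1} - \delta(w)\iota(m) - \iota(u)$ inside $(N\widehat{\mathbb{Z}})^d$ with trivial archimedean component. Comparing archimedean parts (where $\kappa$ is trivial) forces the harmless choice $u = \xi_\infty - wm \in \mathbb{R}^d$; comparing finite parts (where $\iota(m)$ is trivial) reduces to finding $w \in \mathbb{Q}^d$ with $\delta(w)_f \equiv \xi_f\kappa^{-1} \pmod{(N\widehat{\mathbb{Z}})^d}$. This is possible because $\mathbb{A}_f = \mathbb{Q} + \widehat{\mathbb{Z}}$ (as $\mathbb{A}_f/\widehat{\mathbb{Z}} \cong \mathbb{Q}/\mathbb{Z}$) and $\widehat{\mathbb{Z}} = \mathbb{Z} + N\widehat{\mathbb{Z}}$, whence $\mathbb{A}_f^d = \mathbb{Q}^d + (N\widehat{\mathbb{Z}})^d$, and because $\kappa^{-1} \in \mathrm{SL}_d(\widehat{\mathbb{Z}})$ keeps $\xi_f\kappa^{-1}$ in $\mathbb{A}_f^d$ and preserves $(N\widehat{\mathbb{Z}})^d$; with $w$, $u$ so chosen, $z$ is determined and does lie in $(N\widehat{\mathbb{Z}})^d$. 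The conceptual content here is just the general principle that strong approximation for $H$ with respect to $K_H$ plus additive approximation $V(\mathbb{A}) = V(\mathbb{Q}) + V(\mathbb{R}) + K_V$ (with $K_H$ normalising $K_V$) yields strong approximation for $H \ltimes V$ with respect to $K_H \ltimes K_V$, applied to $H = \mathrm{SL}_d$, $V = \mathbb{G}_a^d$; but carrying it out means being scrupulous about which components live at which place.

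Finally I would carry out the intersection computation: a diagonally embedded $(\gamma, w) \in \mathrm{ASL}_d(\mathbb{Q})$ lies in $\widetilde{K}^N$ iff $\gamma \in \mathrm{SL}_d(\mathbb{Q}) \cap K^N$ and, at every finite place $p$, $w \in (p^{v_p(N)}\mathbb{Z}_p)^d$. The first condition gives $\gamma \in \Gamma(N)$ exactly as in \autoref{SLhomeo}; the second says each $w_j \in \mathbb{Q}$ has $v_p(w_j) \ge v_p(N)$ for all $p$, i.e. $w_j \in N\mathbb{Z}$, so $w \in N\mathbb{Z}^d$. Hence $\mathrm{ASL}_d(\mathbb{Q}) \cap \widetilde{K}^N = \Gamma(N) \ltimes N\mathbb{Z}^d$, and the proof concludes exactly as that of \autoref{SLhomeo}: the double-coset identity at each level $N$, compatibility with the transition maps, and passage to the inverse limit. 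I do not anticipate any difficulty in these last two steps beyond what is already present (and left tacit) in the proof of \autoref{SLhomeo}; the only place where one must genuinely think is the translation-part approximation described above.
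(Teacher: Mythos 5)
Your proof is correct, but it takes a genuinely different route from the paper's. The paper proves \autoref{ASLhomeo} in three lines: it asserts the product decomposition $\mathrm{ASL}_d(\mathbb{Q})\backslash\mathrm{ASL}_d(\mathbb{A})\cong\mathrm{SL}_d(\mathbb{Q})\backslash\mathrm{SL}_d(\mathbb{A})\times\mathbb{Q}^d\backslash\mathbb{A}^d$, invokes \autoref{SLhomeo} together with $\mathbb{Q}\backslash\mathbb{A}\cong\varprojlim_N N\mathbb{Z}\backslash\mathbb{R}$, and commutes the inverse limit with the product. You instead rerun the whole strong-approximation argument directly on $\mathrm{ASL}_d$: you build the compact open subgroups $\widetilde{K}(N)=K(N)\ltimes(N\widehat{\mathbb{Z}})^d$, prove $\mathrm{ASL}_d(\mathbb{A})=\mathrm{ASL}_d(\mathbb{Q})\,\mathrm{ASL}_d(\mathbb{R})\,\widetilde{K}(N)$ by solving explicitly for the translation part (using $\mathbb{A}_f=\mathbb{Q}+N\widehat{\mathbb{Z}}$ and the fact that $\kappa^{-1}$ preserves $(N\widehat{\mathbb{Z}})^d$; your equation for $z$ differs from the exact one by right multiplication by $\kappa$, which is immaterial for precisely this reason), and compute $\mathrm{ASL}_d(\mathbb{Q})\cap\widetilde{K}^N=\Gamma(N)\ltimes N\mathbb{Z}^d$. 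Your version is longer but arguably more robust: the quotient $\mathrm{ASL}_d(\mathbb{Q})\backslash\mathrm{ASL}_d(\mathbb{A})$ is naturally a torus \emph{bundle} over $\mathrm{SL}_d(\mathbb{Q})\backslash\mathrm{SL}_d(\mathbb{A})$ with fibre $\mathbb{Q}^d M\backslash\mathbb{A}^d$, and the paper's global product identification deserves more justification than it receives (the real analogue, that $\mathrm{ASL}_d(\mathbb{Z})\backslash\mathrm{ASL}_d(\mathbb{R})$ is a product of $\mathrm{SL}_d(\mathbb{Z})\backslash\mathrm{SL}_d(\mathbb{R})$ with a torus, is false as a statement about topological spaces). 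Your level-by-level double-coset argument sidesteps that issue entirely, at the cost of the bookkeeping you describe; what the paper's route buys, when it works, is brevity and the reduction of the affine case to two previously established facts.
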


\begin{proof}
We combine \autoref{SLhomeo} with $\mathbb{Q} \backslash \mathbb{A} \cong \varprojlim_N N\mathbb{Z} \backslash \mathbb{R}$ and use the commutativity of inverse limits with products (see \cite[2.12]{Borceux}) to write:
\begin{align}
\mathrm{ASL}_d(\mathbb{Q}) \backslash \mathrm{ASL}_d(\mathbb{A}) &\cong \mathrm{SL}_d(\mathbb{Q}) \backslash \mathrm{SL}_d(\mathbb{A}) \times \mathbb{Q}^d \backslash \mathbb{A}^d \\
&\cong \varprojlim_{N} \Gamma(N) \backslash \mathrm{SL}_d(\mathbb{R}) \times \varprojlim_N (N \mathbb{Z} \backslash \mathbb{R})^d \\
&\cong \varprojlim_N (\Gamma(N) \backslash \mathrm{SL}_d(\mathbb{R}) \times (N\mathbb{Z}^d) \backslash \mathbb{R}^d) \\
&\cong \varprojlim_N (\Gamma(N) \ltimes N \mathbb{Z}^d) \backslash \mathrm{ASL}_d(\mathbb{R}).
\end{align}
\end{proof}

\subsection{Function spaces}

For later applications, it will be helpful not only to have a description for the matrix rings as above but one for certain spaces of continuous functions over them.
In what follows, for $N \ge 1$, $(X,X_N) = ( \mathrm{SL}_d(\mathbb{Q}) \backslash \mathrm{SL}_d(\mathbb{A}),  \Gamma(N) \backslash \mathrm{SL}_d(\mathbb{R}))$.

For a topological space $T$, we denote by $\mathcal{C}_0(T)$ the set of real-valued continuous functions on $T$ vanishing at infinity.
 
\begin{lemma}\label{SLindlim} Any $f \in \mathcal{C}_0(X)$ can be approximated uniformly by functions $f_N \in \mathcal{C}_0(X_N)$.
\end{lemma}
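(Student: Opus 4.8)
The plan is to exploit the homeomorphism $X \cong \varprojlim_N X_N$ from \autoref{SLhomeo}, together with the fact that $X$ is a locally compact Hausdorff space and that the transition maps $\pi_N \colon X \to X_N$ are continuous proper surjections. The key structural observation is that the pullback maps $\pi_N^* \colon \mathcal{C}_0(X_N) \to \mathcal{C}_0(X)$ are isometric embeddings (for the sup norm), and that their images form an increasing (directed) family of closed subalgebras of $\mathcal{C}_0(X)$ whose union I claim is dense. So the whole lemma reduces to a density statement, which I would attack via Stone--Weierstrass.

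First I would set $\mathcal{A} = \bigcup_{N \ge 1} \pi_N^*(\mathcal{C}_0(X_N)) \subset \mathcal{C}_0(X)$. This is directed: since $M \mid N$ implies $\Gamma(N) \subset \Gamma(M)$, there is a natural projection $X_N \to X_M$, and any two functions pulled back from levels $M$ and $M'$ both live at level $\mathrm{lcm}(M,M')$; hence $\mathcal{A}$ is a subalgebra of $\mathcal{C}_0(X)$ (closed under sums and products, since these can be computed at a common level). Next I would verify the two hypotheses of the locally compact Stone--Weierstrass theorem for $\mathcal{A}$: (i) $\mathcal{A}$ vanishes nowhere, i.e. for each $x \in X$ there is some $f \in \mathcal{A}$ with $f(x) \ne 0$ — this is clear since already at level $N = 1$ we have $X_1 = \mathrm{SL}_d(\mathbf Z) \backslash \mathrm{SL}_d(\mathbf R)$ and $\mathcal{C}_0$ of a non-compact locally compact space separates points from infinity; and (ii) $\mathcal{A}$ separates points of $X$. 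For (ii), if $x \ne y$ in $X \cong \varprojlim_N X_N$, then by definition of the inverse limit topology their images $\pi_N(x) \ne \pi_N(y)$ for some $N$; since $X_N$ is a locally compact Hausdorff space, $\mathcal{C}_0(X_N)$ separates these two points, and pulling back gives an element of $\mathcal{A}$ separating $x$ and $y$. By Stone--Weierstrass, $\overline{\mathcal{A}} = \mathcal{C}_0(X)$, which is exactly the assertion of the lemma: any $f \in \mathcal{C}_0(X)$ is a uniform limit of functions $\pi_N^*(f_N)$ with $f_N \in \mathcal{C}_0(X_N)$.

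The main obstacle I anticipate is technical rather than conceptual: one must be careful that the inverse limit $\varprojlim_N X_N$ is taken over the directed set $(\mathbf N, \mid)$ of positive integers ordered by divisibility (not the usual order), so that the system is genuinely directed and the transition maps $X_N \to X_M$ for $M \mid N$ are well-defined quotient maps; and one must check that the topology on $X$ pulled across the homeomorphism of \autoref{SLhomeo} really is the inverse-limit topology, for which a sub-basis is given by preimages of opens under the $\pi_N$ — this is what makes the separation argument in (ii) work. A secondary point worth a sentence is that the $\pi_N$ are surjective (so that $\pi_N^*$ is isometric and $\mathcal{A} \subset \mathcal{C}_0(X)$ rather than merely $\mathcal{C}_b(X)$): surjectivity follows from $\mathrm{SL}_d(\mathbf A) = \mathrm{SL}_d(\mathbf Q)\mathrm{SL}_d(\mathbf R)K(N)$, established in the proof of \autoref{SLhomeo}. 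With these points in place the argument is routine. (One could alternatively avoid Stone--Weierstrass entirely and argue by a direct compact-exhaustion plus partition-of-unity argument on $X$, but invoking Stone--Weierstrass on the directed algebra $\mathcal{A}$ is cleaner.)
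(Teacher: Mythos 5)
Your proposal is correct, and it shares the same overall skeleton as the paper's proof --- both reduce the lemma to the density in $\mathcal{C}_0(X)$ of the union $\bigcup_N \pi_N^*(\mathcal{C}_0(X_N))$, using properness of the $\pi_N$ to ensure the pullbacks land in $\mathcal{C}_0(X)$ --- but you establish that density by a genuinely different mechanism. The paper invokes Gelfand duality: with proper maps, the category of locally compact Hausdorff spaces is contravariantly equivalent to that of commutative $C^*$-algebras, so the inverse limit $X=\varprojlim_N X_N$ dualises to a direct limit $\mathcal{C}_0(X)=\varinjlim_N \mathcal{C}_0(X_N)$, and density of the union of the images is then read off from the universal property of the colimit (or quoted from Blackadar). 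You instead verify directly that the union is a subalgebra (using directedness of $(\mathbb{N},\mid)$ and the fact that the finite-index transition maps $X_N\to X_M$ are proper, so functions at levels $M,M'$ can be compared at level $\operatorname{lcm}(M,M')$), that it vanishes nowhere, and that it separates points via the defining property of the inverse-limit topology, then conclude by the locally compact Stone--Weierstrass theorem (which, since the paper's $\mathcal{C}_0$ is real-valued, needs no self-adjointness hypothesis). Your route is more elementary and self-contained, and it makes explicit exactly where the inverse-limit topology enters (the separation step), which the categorical argument conceals; the paper's route is shorter on the page but outsources the real content to the duality theorem. Your closing remarks on surjectivity and on the divisibility ordering are correct but not strictly necessary: properness alone already guarantees $\pi_N^*(\mathcal{C}_0(X_N))\subset\mathcal{C}_0(X)$, which is all the lemma requires.
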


\begin{proof}
This is a consequence of \autoref{SLhomeo} and the fact that the canonical projection maps $\pi_N \colon X \to X_N$ are proper.

Properness ensures that if we pull back a function vanishing at infinity on $X_N$ up to $X$, it vanishes at infinity there as well.
The inverse limit in \autoref{SLhomeo} is in the category of locally compact Hausdorff spaces. With proper maps, that is dual to the category of commutative $C^*$-algebras. So $\mathcal{C}_0(X)$ is the direct limit of the $\mathcal{C}_0(X_N)$, and then it is immediate that functions in $\mathcal{C}_0(X)$ can be approximated by functions in $\mathcal{C}_0(X_N)$.

More precisely, since the maps $\pi_N$ are proper we have a duality (meaning a contravariant equivalence) between our category of spaces and that of commutative $C^*$-algebras. Because of the contravariance, limits in the former correspond to colimits in the latter. So by the assertion that $X = \varprojlim_N X_N$, we have $\mathcal{C}_0(X) = \varinjlim_N \mathcal{C}_0(X_N)$. Now the approximation statement is immediate: (one can either look at \cite[II.8.2]{Blackadar} or) it is easy to convince oneself by considering the closure of the unions of images of the $\mathcal{C}_0(X_N)$ in $\mathcal{C}_0(X)$ and proving that it has the universal property of a colimit, so it must coincide with $\mathcal{C}_0(X)$. In other words, because we have that inductive limit, we have (*-homomorphisms) $i_N \colon \mathcal{C}_0(X_N) \to \mathcal{C}_0(X)$ and the union of the $i_N(\mathcal{C}_0(X_N))$ is dense in $\mathcal{C}_0(X)$.

\end{proof}

The proof of the corresponding statement for the special affine group is entirely analogous.

\section{A Siegel--Weil formula} \label{adelicSiegel}

The final result presented in this section can be seen as a striking extension of the analogy between $\mathbb{R}/\mathbb{Z}$ and $\mathbb{A}/\mathbb{Q}$.
We first recall Siegel's celebrated mean value theorem \cite{SiegelMVT45} on the space of lattices. 

\begin{thm}\label{Siegelformula} If $f \in L^1(\mathbb{R}^d)$, then 
\begin{equation} \int_{\mathrm{SL}_d(\mathbb{Z}) \backslash \mathrm{SL}_d(\mathbb{R})} \sum_{\vecm \in \mathbb{Z}^d \setminus \{\mathbf 0\}} f(\vecm M) d \mu_{\mathrm{SL}_d(\mathbb{Z}) \backslash \mathrm{SL}_d(\mathbb{R})}(M) = \int_{\mathbb{R}^d} f(\vecx) d\vecx. \end{equation} \end{thm}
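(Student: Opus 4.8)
The plan is to interpret the left-hand side as integration against a Radon measure on $\mathbb{R}^d$, exploit its $\mathrm{SL}_d(\mathbb{R})$-invariance, and identify the resulting multiple of Lebesgue measure. Write $X = \mathrm{SL}_d(\mathbb{Z})\backslash\mathrm{SL}_d(\mathbb{R})$, $\mu = \mu_{\mathrm{SL}_d(\mathbb{Z})\backslash\mathrm{SL}_d(\mathbb{R})}$ the invariant probability measure, and, for $f \ge 0$ Borel, set $\widehat f(M) = \sum_{\vecm \in \mathbb{Z}^d \setminus \{\mathbf 0\}} f(\vecm M)$, a left $\mathrm{SL}_d(\mathbb{Z})$-invariant function descending to $X$. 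Both $f \mapsto \widehat f$ and $f \mapsto \int_{\mathbb{R}^d} f$ commute with increasing limits of non-negative functions, and — once the integrability below is in hand — with dominated limits, so a routine two-step approximation (dominated convergence to pass from $\mathcal{C}_c(\mathbb{R}^d)$ to bounded compactly supported Borel functions, then monotone convergence to all non-negative $f$) reduces the theorem to $f \in \mathcal{C}_c(\mathbb{R}^d)$ with $f \ge 0$; the general $f \in L^1$ then follows by splitting into positive and negative parts, which also shows $\sum_{\vecm} f(\vecm M)$ converges absolutely for $\mu$-a.e.\ $M$.

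The crucial technical input is that $\widehat f \in L^1(X,\mu)$ for $f \in \mathcal{C}_c(\mathbb{R}^d)$ with $f \ge 0$ — equivalently, $\int_X \#(\mathbb{Z}^d M \cap B)\, d\mu(M) < \infty$ for every bounded $B \subset \mathbb{R}^d$. I expect this to be the main obstacle, since it is the one step needing genuine reduction theory: one works in a Siegel fundamental domain, writes $M$ in Iwasawa coordinates $M = nak$, bounds $\widehat f(M)$ by $\|f\|_\infty$ times a count governed by the diagonal entries of $a$ (equivalently the successive minima of $\mathbb{Z}^d M$), and checks that the resulting majorant is integrable against the explicit invariant measure on the Siegel domain. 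Everything that follows is soft.

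Granting this, $\nu \colon f \mapsto \int_X \widehat f\, d\mu$ is a finite positive linear functional on $\mathcal{C}_c(\mathbb{R}^d)$, hence given by a Radon measure $\nu$ on $\mathbb{R}^d$. For $g \in \mathrm{SL}_d(\mathbb{R})$ one has $\widehat{f(\cdot\, g)}(M) = \sum_{\vecm} f(\vecm M g) = \widehat f(Mg)$, so right invariance of $\mu$ gives $\nu(f(\cdot\, g)) = \nu(f)$: thus $\nu$ is invariant under the linear $\mathrm{SL}_d(\mathbb{R})$-action on $\mathbb{R}^d$. That action has the two orbits $\{\mathbf 0\}$ and $\mathbb{R}^d\setminus\{\mathbf 0\}$; on the latter the point stabiliser $\mathbb{R}^{d-1}\rtimes\mathrm{SL}_{d-1}(\mathbb{R})$ is unimodular, so the invariant Radon measure there is unique up to scaling, and Lebesgue measure (invariant since $\det = 1$) realises it. Hence $\nu = a\,\delta_{\mathbf 0} + c\,\Leb$ for some $a,c \ge 0$. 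To see $a = 0$, choose $f_\varepsilon \in \mathcal{C}_c(\mathbb{R}^d)$ with $\mathbf{1}_{B(\mathbf 0,\varepsilon)} \le f_\varepsilon \le \mathbf{1}_{B(\mathbf 0,1)}$; for each fixed $M$ the shortest nonzero vector of $\mathbb{Z}^d M$ has positive length, so $\widehat{f_\varepsilon}(M) \to 0$ as $\varepsilon \to 0$, while $\widehat{f_\varepsilon} \le \widehat{\mathbf{1}_{B(\mathbf 0,1)}} \in L^1(\mu)$, and dominated convergence yields $a = \nu(\{\mathbf 0\}) \le \nu(f_\varepsilon) \to 0$.

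Finally, to show $c = 1$, apply the identity already obtained, $\int_X \widehat f\, d\mu = c\int_{\mathbb{R}^d} f$, to the dilated indicators $f_R = \mathbf{1}_{B(\mathbf 0,R)}$, for which $\widehat{f_R}(M) = \#(\mathbb{Z}^d M \cap B(\mathbf 0,R)\setminus\{\mathbf 0\})$; this gives $c = \int_X \widehat{f_R}(M)/\vol(B(\mathbf 0,R))\, d\mu(M)$ for every $R$. For each fixed $M$ the integrand tends to $1$ as $R \to \infty$ by the classical lattice-point count, and the reduction-theory estimates used for integrability show that the contribution of the cusp (lattices with anomalously short vectors, of measure decaying polynomially) is uniformly negligible, so one passes to the limit and concludes $c = 1$. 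Together with the first paragraph this establishes the formula for all $f \in L^1(\mathbb{R}^d)$.
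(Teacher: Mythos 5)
Your argument is correct, but it follows a genuinely different route from the one in the paper. The paper reproduces Weil's version of Siegel's proof: it decomposes $\mathbb{Z}^d\setminus\{\mathbf 0\}$ into $\Gamma$-orbits (positive multiples of primitive vectors, with stabiliser the upper-triangular unipotent integer subgroup), unfolds the integral over a quotient by that stabiliser, evaluates it via the Iwasawa decomposition to get $\mu(\Gamma\backslash G)f(\mathbf 0)+\zeta(d)\hat f(\mathbf 0)$ up to normalisation, and then pins down the constant by Poisson summation and Fourier inversion --- which simultaneously computes $\mathrm{vol}(\mathrm{SL}_d(\mathbb{Z})\backslash\mathrm{SL}_d(\mathbb{R}))$. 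You instead view $f\mapsto\int_X\widehat f\,d\mu$ as an $\mathrm{SL}_d(\mathbb{R})$-invariant Radon measure, classify such measures via the two orbits of the linear action and unimodularity of the stabiliser $\mathbb{R}^{d-1}\rtimes\mathrm{SL}_{d-1}(\mathbb{R})$, and normalise by the Gauss lattice-point count for a fixed covolume-one lattice together with a domination argument. Both proofs hinge on the same hard input --- the reduction-theoretic integrability of the Siegel transform of a compactly supported function, which you rightly flag as the crux and which controls both your $a=0$ step and the passage to the limit $R\to\infty$ (the dominating function $\sup_{R\ge1}\#(\mathbb{Z}^dM\cap B_R)/R^d$ is bounded by products of reciprocal successive minima, integrable for $d\ge2$). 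The trade-off: the paper's unfolding yields the volume of the quotient as a by-product and avoids any equidistribution input, whereas your approach avoids Poisson summation and explicit Iwasawa computations, works uniformly in $d$ without the orbit combinatorics of primitive vectors, and is closer in spirit to the invariance arguments used elsewhere in the paper; your sketch of the final interchange of limits ("uniformly negligible cusp contribution") deserves the explicit dominating function just indicated, but the mechanism you name is the right one.
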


In the above formula, the measure $\mu_{\mathrm{SL}_d(\mathbb{Z}) \backslash \mathrm{SL}_d(\mathbb{R})}$ is normalised to be a probability measure. In the proof below, we shall instead use the Haar measure coming from the Iwasawa decomposition, which will allow us --- following Siegel --- to compute the normalisation constant, that is, the volume of $\mathrm{SL}_d(\mathbb{Z}) \backslash \mathrm{SL}_d(\mathbb{R})$: $\prod_{k=2}^d \zeta(k)$, where $\zeta$ is the Riemann zeta function.
In addition, that formula was used by Siegel as the basis for a probabilistic method argument to solve a conjecture of Minkowski in the geometry of numbers.

In 1946, Weil viewed Siegel's result in the context of his integration theory on topological groups \cite{Weil1946} and gave a proof which essentially yields the adelic version as well. 
We sketch Weil's proof for $d=2$, which already contains the crucial ideas and helps us motivate the adelic result.

\begin{proof} 
Assume $f \in \mathcal{C}_c(\mathbb{R}^2)$ is non-negative. Let $G = \mathrm{SL}_2(\mathbb{R})$ and $\Gamma = \mathrm{SL}_2(\mathbb{Z})$.
We define, for $M \in G$, \begin{equation} F(M) = \sum_{\vecm \in \mathbb{Z}^2} f(\vecm M). \end{equation} 
Note that, by definition, $F$ is left-$\Gamma$-invariant.
The fact that for any primitive vector there exists a matrix in $\Gamma$ whose last column is that vector along with the fact that the stabiliser of  $(0,1)$ in $\Gamma$ is $N(\mathbb{Z}) = \left\{ \begin{pmatrix} 1 & x \\ 0 & 1 \end{pmatrix} \, : \, x \in \mathbb{Z} \right\}$ implies that $\mathbb{Z}^2  \setminus \{ \bm 0 \}$ is in bijection with $\mathbb{N} \times N(\mathbb{Z}) \backslash \Gamma$.
We can thus split the integral as follows: 
\begin{align} \int_{\Gamma \backslash G} F d\mu =& \int_{\Gamma \backslash G} f(\bm 0) d\mu + \int_{\Gamma \backslash G} \sum_{\bm m \in \mathbb{Z}^2 \setminus \{ \bm 0 \}} f(\bm m M) d\mu(M) \\
=&  \mu(\Gamma \backslash G) f(\bm 0) + \sum_{l \in \mathbb{N}} \int_{N(\mathbb{Z})\backslash G} f(l \cdot (0, 1) M) d\mu. \end{align}
If we define $S = \sum_{l \in \mathbb{N}} \int_{N(\mathbb{Z})\backslash G} f(l \cdot (0, 1) M) d\mu$ and $N = \left\{ \begin{pmatrix} 1 & x \\ 0 & 1 \end{pmatrix} \, : \, x \in \mathbb{R} \right\}$, then we can use the Iwasawa decomposition to compute $S$: 
\begin{equation} S = \mu(N(\mathbb{Z}) \backslash N) \zeta(2) \int_{\mathbb{R}^2} f(\bm x) d \bm x. \end{equation}
However $N(\mathbb{Z}) \backslash N$ can be identified with $\mathbb{Z} \backslash \mathbb{R}$ and so has measure $1$.
Finally we obtain: 
\begin{equation} \int_{\Gamma \backslash G} \sum_{\vecm \in \mathbb{Z}^2} f(\vecm M) d\mu(M) = \mu(\Gamma \backslash G) f(\bm 0) +  \zeta(2) \hat{f}(\bm 0)\end{equation}
where $\hat{f}$ is the Fourier transform of $f$.
Now Weil's trick is to apply Poisson summation, which reads:
\[ \sum_{\bm m \in \mathbb{Z}^2} f(\vecm M) = \sum_{\bm n \in \mathbb{Z}^2} \hat{f}(\bm n \; ^tM^{-1}). \]
Since $M \mapsto \; ^tM^{-1}$ is an automorphism of $G$ which maps $\Gamma$ to itself, we can reproduce the above calculation with $\hat{f}$ instead of $f$, and conclude, noting that $\hat{\hat{f}}(\bm 0)=f(\bm 0)$ by Fourier inversion: 
\[ \mu(\Gamma \backslash G) f(\bm 0) +  \zeta(2) \hat{f}(\bm 0) = \mu(\Gamma \backslash G) \hat{f}(\bm 0) +  \zeta(2) f(\bm 0). \]
Choosing $f$ such that $f(\bm 0) \ne \hat{f}(\bm 0)$ allows us to deduce that $\mu(\Gamma \backslash G) = \zeta(2)$.

\end{proof}

As Weil himself writes in the \emph{Commentaire} of his \emph{\OE{}uvres Scientifiques} \cite{weil1979oeuvres} pertaining to that 1946 paper:

``Je constatai que par l'application de quelques r\'{e}sultats g\'{e}n\'{e}raux, et par l'usage de la sommation de Poisson, on pouvait simplifier la d\'{e}monstration de Siegel [\ldots]. Il n'y a eu aucune difficult\'{e}, par la suite, \`{a} transposer au cas `ad\'{e}lique' la m\'{e}thode que j'y employais et \`{a} l'appliquer au calcul du `nombre de Tamagawa'.''

Indeed, the same type of calculation as in the above proof can be performed adelically. 
We note one key difference: $\mathrm{SL}_d(\mathbb{A})$ does not act transitively on $\mathbb{A}^d \setminus \{ \bm 0 \}$ ($\mathbb{A}$ is merely a ring, not a field).
However, $\mathrm{SL}_d(\mathbb{A})$ does act transitively on the smaller, but nonetheless Zariski-open, set $X=U(\mathbb{A})$, where $U$ is the complement of the origin in affine $\mathbb{Q}$-space. Still, the complement has measure $0$ and so Weil can make the argument work \cite[Chapter III, 3.4]{WeilBook}. In the adelic case, the $\zeta$ factor in the above calculation is always $1$, which is usually phrased as the assertion that the Tamagawa number of $\mathrm{SL}_d$ over $\mathbb{Q}$ is $1$. For more on Tamagawa numbers (including their proper definition), we refer to Clozel's S\'eminaire Bourbaki \cite{ClozelTamagawa}.

We state the Siegel--Weil formula that ensues:
\begin{thm}\label{SiegelWeilformula} If $f \in L^1(\mathbb{A}^d)$, then 
\begin{equation} \int_{\mathrm{SL}_d(\mathbb{Q}) \backslash \mathrm{SL}_d(\mathbb{A})} \sum_{\vecq \in \mathbb{Q}^d \setminus \{\mathbf 0\}} f(\vecq M) d \mu_{\mathrm{SL}_d(\mathbb{Q}) \backslash \mathrm{SL}_d(\mathbb{A})}(M) = \int_{\mathbb{A}^d} f(\vecx) d\vecx. \end{equation} \end{thm}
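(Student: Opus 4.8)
The plan is to mimic Weil's proof of the real case (sketched above for $d=2$) but carry it out uniformly over all places, exploiting the fact that $\mathrm{SL}_d(\mathbb{A})$ acts transitively on the Zariski-open subset $X = U(\mathbb{A}) \subset \mathbb{A}^d \setminus \{\mathbf 0\}$, where $U$ is the complement of the origin in affine space. Since $\mathbb{A}^d \setminus X$ has Haar measure zero, the left-hand side is unchanged if we replace the sum over $\mathbb{Q}^d \setminus \{\mathbf 0\}$ by a sum over the single $\mathrm{SL}_d(\mathbb{Q})$-orbit of a fixed primitive vector, say $\mathbf{e} = (0, \ldots, 0, 1)$. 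By approximation it suffices to treat $f \in \mathcal{C}_c(\mathbb{A}^d)$ non-negative (using $L^1$-density and monotone convergence to pass to general $f \in L^1$).

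First I would set $F(g) = \sum_{\vecq \in \mathbb{Q}^d} f(\vecq g)$ for $g \in \mathrm{SL}_d(\mathbb{A})$, which is left-$\mathrm{SL}_d(\mathbb{Q})$-invariant, and split off the contribution of $\vecq = \mathbf 0$, which yields $\mu_{\mathrm{SL}_d(\mathbb{Q}) \backslash \mathrm{SL}_d(\mathbb{A})}(\mathrm{SL}_d(\mathbb{Q}) \backslash \mathrm{SL}_d(\mathbb{A}))\, f(\mathbf 0) = f(\mathbf 0)$ since the measure is a probability measure. For the remaining terms, the stabiliser of $\mathbf e$ in $\mathrm{SL}_d(\mathbb{A})$ is the mirabolic-type subgroup $H$ (matrices fixing $\mathbf e$ under the right action), and transitivity on $X(\mathbb{A})$ together with the measure-zero complement identifies the sum-integral with $\int_{H(\mathbb{Q}) \backslash \mathrm{SL}_d(\mathbb{A})} f(\mathbf e\, g)\, d\mu(g)$. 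Unfolding this along $H(\mathbb{Q}) \backslash H(\mathbb{A})$ (which is compact and, with the compatibly normalised invariant measure, has volume $1$ — this is where the Tamagawa number normalisation enters) collapses the integral to $\int_{\mathbb{A}^d} f(\vecx)\, d\vecx$ up to the measure of $H(\mathbb{Q}) \backslash H(\mathbb{A})$, giving $\int_{\Gamma' \backslash \mathrm{SL}_d(\mathbb{A})} F\, d\mu = c\, f(\mathbf 0) + \hat f(\mathbf 0)$ for some constant $c = \mu_{\mathrm{SL}_d(\mathbb{Q}) \backslash \mathrm{SL}_d(\mathbb{A})}(\ldots) = 1$, where $\hat f$ is the adelic Fourier transform (self-dual Haar measure on $\mathbb{A}^d$, which is the one for which $\mathbb{Q}^d$ is a unimodular lattice).

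Then I would invoke adelic Poisson summation, $\sum_{\vecq \in \mathbb{Q}^d} f(\vecq g) = \sum_{\vecn \in \mathbb{Q}^d} \hat f(\vecn\, {}^t g^{-1})$, valid for Schwartz--Bruhat $f$ and hence for a dense class. As in the real case, $g \mapsto {}^t g^{-1}$ is an automorphism of $\mathrm{SL}_d(\mathbb{A})$ preserving $\mathrm{SL}_d(\mathbb{Q})$, so repeating the computation with $\hat f$ in place of $f$ gives the symmetric identity $c\, f(\mathbf 0) + \hat f(\mathbf 0) = c\, \hat f(\mathbf 0) + f(\mathbf 0)$ once we also record, via Fourier inversion, $\hat{\hat f}(\mathbf 0) = f(\mathbf 0)$. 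Choosing $f$ with $f(\mathbf 0) \ne \hat f(\mathbf 0)$ forces $c = 1$ (equivalently, the Tamagawa number of $\mathrm{SL}_d$ over $\mathbb{Q}$ is $1$), and substituting back yields exactly $\int_{\mathrm{SL}_d(\mathbb{Q}) \backslash \mathrm{SL}_d(\mathbb{A})} \sum_{\vecq \in \mathbb{Q}^d \setminus \{\mathbf 0\}} f(\vecq M)\, d\mu(M) = \hat f(\mathbf 0) = \int_{\mathbb{A}^d} f(\vecx)\, d\vecx$.

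The main obstacle I anticipate is the careful bookkeeping of Haar measure normalisations across all places simultaneously: one must pin down the self-dual measure on $\mathbb{A}^d$, the compatible quotient measure on $H(\mathbb{Q}) \backslash H(\mathbb{A})$, and verify that the product of local volumes telescopes to $1$ (the crucial point that distinguishes the adelic statement from the archimedean one, where the analogous constant is $\zeta(2)$ for $d=2$). Handling the measure-zero set $\mathbb{A}^d \setminus U(\mathbb{A})$ and justifying transitivity of $\mathrm{SL}_d(\mathbb{A})$ on $U(\mathbb{A})$ — which reduces to transitivity of $\mathrm{SL}_d(\mathbb{Z}_p)$ on primitive vectors of $\mathbb{Z}_p^d$ at each finite place, a standard fact — is routine but must be stated cleanly; and the passage from $\mathcal{C}_c$ (or Schwartz--Bruhat) to general $f \in L^1(\mathbb{A}^d)$ requires a standard density and positivity argument. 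Alternatively, one can shortcut most of this by simply citing Weil's treatment \cite[Chapter III, 3.4]{WeilBook}, since the computation is identical to the one we sketched for the real case with the only change being that the local zeta factors now multiply to $1$.
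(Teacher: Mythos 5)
Your proposal is correct and follows essentially the same route as the paper, which sketches Weil's Poisson-summation argument for the real case and then observes that the identical computation goes through adelically once one notes the transitivity of $\mathrm{SL}_d(\mathbb{A})$ on $U(\mathbb{A})$, the measure-zero complement, and the fact that the $\zeta$-factor degenerates to $1$ (the Tamagawa number of $\mathrm{SL}_d$), deferring the details to Weil \cite[Chapter III, 3.4]{WeilBook}. The only cosmetic slip is your phrase about ``replacing'' the sum over $\mathbb{Q}^d\setminus\{\mathbf 0\}$ by a single orbit: since $\mathbb{Q}$ is a field that set already \emph{is} one $\mathrm{SL}_d(\mathbb{Q})$-orbit (which is precisely why no $\zeta$-factor appears), and the measure-zero issue only enters when identifying the unfolded integral with $\int_{\mathbb{A}^d} f$.
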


\section{Equidistribution theorems} \label{equid}
\subsection{On the space of adelic lattices}

For $\bm x \in \mathbb{R}^{d-1}$, define $n(\bm x) = \begin{pmatrix} 1 & \bm x \\ ^t \bm 0 & I_{d-1} \end{pmatrix} \in \mathrm{SL}_d(\mathbb{R})$ and recall that, for $t>0$, we defined the diagonal flow $\Phi^t = \operatorname{diag}(e^{-(d-1)t}, e^t, \ldots, e^t)$.


\begin{thm}\label{sladelicequid} For every bounded real-valued continuous function $f$ on $\mathrm{SL}_d(\mathbb{Q})\backslash\mathrm{SL}_d(\mathbb{A})$, every $M \in \mathrm{SL}_d(\mathbb{A})$ and every Borel probability measure $\lambda$ on $\mathbb{R}^{d-1}$ which is absolutely continuous with respect to the Lebesgue measure on $\mathbb{R}^{d-1}$,
\begin{equation} \lim_{t \to \infty} \int_{\mathbb{R}^{d-1}} f(M \iota(n(\bm x) \Phi^t)) d \lambda(\bm x) = \int_{\mathrm{SL}_d(\mathbb{Q}) \backslash \mathrm{SL}_d(\mathbb{A})} f d \mu_{\mathrm{SL}_d(\mathbb{Q}) \backslash \mathrm{SL}_d(\mathbb{A})}. \end{equation} \end{thm}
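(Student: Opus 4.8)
The plan is to reduce the adelic equidistribution statement to its classical counterpart on the finite-level spaces $X_N = \Gamma(N)\backslash\mathrm{SL}_d(\mathbf R)$, where the needed equidistribution of expanding horospheres is known (this is the classical theorem of the Marklof--Str\"ombergsson type for translates of $n(\bm x)\Phi^t$ against absolutely continuous measures, which follows from Ratner/linearization or from mixing for the diagonal flow). First I would note that, by a standard approximation argument, it suffices to prove the convergence for $f \in \mathcal C_0(X)$: a general bounded continuous $f$ can be handled because the measures $(n(\bm x)\Phi^t)_*\lambda$ are tight — on the finite level this is the non-escape-of-mass statement for expanding horospheres, and tightness lifts since the projections $\pi_N$ are proper — so no mass escapes to infinity, and the $\mathcal C_0$ case plus constants suffices.

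Next, given $f \in \mathcal C_0(X)$ and $\eps>0$, apply \autoref{SLindlim} to choose $N$ and $f_N \in \mathcal C_0(X_N)$ with $\|f - \pi_N^* f_N\|_\infty < \eps$. Then
\[
\left| \int_{\mathbf R^{d-1}} f(M\iota(n(\bm x)\Phi^t))\,d\lambda(\bm x) - \int_{\mathbf R^{d-1}} f_N(\pi_N(M\iota(n(\bm x)\Phi^t)))\,d\lambda(\bm x) \right| < \eps
\]
uniformly in $t$. Now the point is that, under the homeomorphism of \autoref{SLhomeo}, the image $\pi_N(M\iota(n(\bm x)\Phi^t))$ equals $g_N\, n(\bm x)\Phi^t$ in $\Gamma(N)\backslash\mathrm{SL}_d(\mathbf R)$ for some fixed $g_N \in \mathrm{SL}_d(\mathbf R)$ depending only on $M$ and $N$ (write $M = \delta(\gamma)\,\iota(h)\,k$ with $\gamma\in\mathrm{SL}_d(\mathbf Q)$, $h\in\mathrm{SL}_d(\mathbf R)$, $k\in K(N)$ via strong approximation, so that multiplying on the right by $\iota(n(\bm x)\Phi^t)$ and projecting kills the $\delta(\gamma)$ on the left and the $k$ on the right, leaving $h\,n(\bm x)\Phi^t$). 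Hence the inner integral is exactly the classical horosphere average on $X_N$ with a shifted basepoint, to which the known finite-level equidistribution theorem applies: it converges as $t\to\infty$ to $\int_{X_N} f_N\, d\mu_{X_N}$. Finally, since $\mu_X$ pushes forward to $\mu_{X_N}$ under $\pi_N$ (both being the normalized Haar/invariant probability measures, compatible across the tower), $\int_{X_N} f_N\,d\mu_{X_N} = \int_X \pi_N^* f_N\,d\mu_X$, which is within $\eps$ of $\int_X f\,d\mu_X$. Letting $\eps\to 0$ gives the claim.

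The main obstacle I expect is the combination of two points that are easy to state but need care. The first is the compatibility of basepoints: one must check that right-translation by $\iota(n(\bm x)\Phi^t)$ in $\mathrm{SL}_d(\mathbf A)$ really does descend, under the identification $X\cong\varprojlim_N X_N$, to right-translation by $n(\bm x)\Phi^t$ in each $X_N$ — i.e. that the archimedean component of the adelic element is exactly what governs the finite-level dynamics, while the finite components only shift the (fixed) basepoint by an element of $K(N)$ that gets absorbed. The second is justifying the passage from $\mathcal C_0$ to bounded continuous $f$: this rests on a no-escape-of-mass estimate for the expanding horosphere averages $(n(\bm x)\Phi^t)_*\lambda$, uniform in $t$, which must be invoked (on the finite level, e.g. via Siegel-type or the known quantitative non-divergence of horospherical orbits) and then transported to $X$ using properness of $\pi_N$ together with the Siegel--Weil formula \autoref{SiegelWeilformula} to control the tails directly on $\mathrm{SL}_d(\mathbf Q)\backslash\mathrm{SL}_d(\mathbf A)$ if one prefers to argue adelically.
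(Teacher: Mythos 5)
Your proposal is correct and follows essentially the same route as the paper: reduce to the finite-level spaces $X_N=\Gamma(N)\backslash\mathrm{SL}_d(\mathbf R)$ via the inverse-limit description and \autoref{SLindlim}, invoke the classical equidistribution of expanding horospheres there, and then pass from $\mathcal C_0$ to $\mathcal C_b$ using that all measures involved are probability measures. If anything, you are more explicit than the paper on the strong-approximation decomposition $M=\delta(\gamma)\iota(h)k$ showing that the adelic translate descends to a classical horosphere translate with shifted (and $N$-dependent) basepoint, and your tightness argument for the $\mathcal C_0\to\mathcal C_b$ step is an acceptable variant of the paper's sandwich-by-functions-constant-outside-a-compact-set argument.
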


\begin{proof}
Let $X = \mathrm{SL}_d(\mathbb{Q}) \backslash \mathrm{SL}_d(\mathbb{A})$ and, for $N \in \mathbb{N} \, , \,X_N = \Gamma(N) \backslash \mathrm{SL}_d(\mathbb{R})$. Denote by $\mu$ and $\mu_N$ the corresponding Haar measures.
Let $f \in \mathcal{C}_0(X)$ where $\mathcal{C}_0$ denotes the set of real-valued continuous functions vanishing at infinity. Let also $\lambda \ll \operatorname{Leb}_{d-1}$ be a Borel probability measure on $\mathbb{R}^{d-1}$.
Recall that we have (\autoref{SLhomeo}) \begin{equation}\label{homeo} X \cong \varprojlim_{N \in \mathbb{N}} X_N. \end{equation} 
We know --- by using the mixing property of the diagonal flow $\Phi^t$ \cite{Mooreergodicity}, as done in \cite{EskinMcMullen}, for instance --- that for every $N \in \mathbb{N}$, every $M_\infty \in \mathrm{SL}_d(\mathbb{R})$ and every $g \in \mathcal{C}_0(X_N)$,
\begin{equation}\label{mixing} \lim_{t \to \infty} \int_{\mathbb{R}^{d-1}} g(M_\infty n(\bm x) \Phi^t) d \lambda(\bm x) = \int_{X_N} g d \mu_N.\end{equation}
The result therefore follows right away when $f$ factors through $\pi_N \colon X \to X_N$ (so that it is actually a function on $X_N$). 
Indeed, the expression we want then simply reduces to the one we know: $\int_X f d\mu = \int_{X_N} f d\mu_N$.
So now the idea is to approximate every $f$ by such functions $f$ that ``come from'' $X_N$ (for ever larger $N$).
Let $\varepsilon > 0$.
By the claim about inverse limits \eqref{homeo} (more precisely, \autoref{SLindlim}), we can find $N \in \mathbb{N}$ and $f_N \in \mathcal{C}_0(X_N)$ such that \begin{equation}\label{approx} \forall M_0 \in X, \, |f(M_0) - f_N(\pi_N(M_0))| \le \frac {\varepsilon}3. \end{equation}
In particular, $\forall t > 0, \forall \bm x \in \mathbb{R}^{d-1}, \, |f(\iota(M_t(\bm x))) - f_N(\pi_N(\iota(M_t(\bm x))))| \le \frac {\varepsilon}3$, 
where we define $M_t(\bm x) = M_\infty n(\bm x) \Phi^t$.
Since $\lambda$ is a probability measure, this yields
\begin{equation} \forall t > 0, \, \left| \int_{\mathbb{R}^{d-1}} f(\iota(M_t(\bm x))) - f_N(\pi_N(\iota(M_t(\bm x)))) d\lambda(\bm x) \right| \le \frac {\varepsilon}3. \end{equation}
By \eqref{mixing}, we can find $t_0 > 0$ such that for all $t \ge t_0$, \begin{equation} \left| \int_{\mathbb{R}^{d-1}} f_N(\pi_N(\iota(M_t(\bm x)))) d \lambda(\bm x) - \int_{X_N} f_N d \mu_N \right| \le \frac {\varepsilon}3. \end{equation}
Now we get, for $t \ge t_0$,
\begin{align*} & \left| \int_{\mathbb{R}^{d-1}} f(\iota(M_t(\bm x))) d\lambda(\bm x) - \int_X f d\mu \right| \\ &\le \left|\int_{\mathbb{R}^{d-1}} f(\iota(M_t(\bm x))) d\lambda(\bm x) - \int_{\mathbb{R}^{d-1}} f_N(\pi_N(\iota(M_t(\bm x)))) d\lambda(\bm x) \right| \\ &\phantom{sp}+ \left|\int_{\mathbb{R}^{d-1}} f_N(\pi_N(\iota(M_t(\bm x)))) d\lambda(\bm x) - \int_{X_N} f_N d\mu_N \right| \\ &\phantom{sp}+ \left| \int_{X_N} f_N d\mu_N - \int_X f d\mu \right| \\ &\le \frac {\varepsilon}3 + \frac {\varepsilon}3 + \frac {\varepsilon}3 = \varepsilon. \end{align*}
The last $\frac {\varepsilon}3$ comes from:
\begin{equation} \left| \int_{X_N} f_N d\mu_N - \int_X f d\mu \right| =  \left| \int_X f_N \circ \pi_N d\mu - \int_X f d\mu \right| \le \int_X |f_N  \circ \pi_N - f| d\mu \le \frac {\varepsilon}3. \end{equation} We used the fact that $\mu_N = (\pi_N)_* \mu$ to get the first equality, while the last inequality follows from \eqref{approx} and the fact that $\mu$ is a probability measure. 

This concludes the proof when $f \in \mathcal{C}_0(X)$.
It is now a standard approximation argument to pass from the space of continuous functions vanishing at infinity $\mathcal{C}_0$ to the space of bounded continuous functions $\mathcal{C}_b$ given that the measure spaces involved are all probability spaces. We sketch that argument here for the sake of completeness. Because the measures involved are probability measures, the theorem immediately follows for constant functions $f$. This allows us to extend the result to continuous functions which are constant outside some compact set.
Now if $f$ is in $\mathcal{C}_b(X)$ and $\varepsilon > 0$, we find continuous functions on $X$, $f^-$ and $f^+$, which are constant outside some compact set and satisfy: $f^- \le f \le f^+$ and $\| f^+ - f^- \|_{L^1(X, \mu)} \le \varepsilon.$ Applying the result to those functions gives it for $f$ as well.

\end{proof}

To prove \autoref{slrandhoro}, it is helpful to state the following consequence of the above theorem explicitly:
\begin{cor}\label{limsupliminf} For every $M \in \mathrm{SL}_d(\mathbb{A})$, every Borel probability measure $\lambda$ on $\mathbb{R}^{d-1}$ which is absolutely continuous with respect to the Lebesgue measure on $\mathbb{R}^{d-1}$ and every measurable subset $\mathcal{E} \subset \mathrm{SL}_d(\mathbb{Q}) \backslash \mathrm{SL}_d(\mathbb{A})$, we have 
\begin{equation} \limsup_{t \to \infty} \lambda( \{ \bm x \in \mathbb{R}^{d-1} \, : \, M \iota(n(\bm x) \Phi^t) \in \mathcal{E} \}) \le \mu_{\mathrm{SL}_d(\mathbb{Q}) \backslash \mathrm{SL}_d(\mathbb{A})}(\overline{\mathcal{E}}) \end{equation}
and 
\begin{equation} \liminf_{t \to \infty} \lambda( \{ \bm x \in \mathbb{R}^{d-1} \, : \, M \iota(n(\bm x) \Phi^t) \in \mathcal{E} \}) \ge \mu_{\mathrm{SL}_d(\mathbb{Q}) \backslash \mathrm{SL}_d(\mathbb{A})}(\accentset{\circ}{\mathcal{E}}). \end{equation} \end{cor}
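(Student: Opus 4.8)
The plan is to deduce \autoref{limsupliminf} from \autoref{sladelicequid} by the usual portmanteau-style argument relating weak convergence of measures to the behaviour of indicator functions of sets, adapted here to the one-parameter family indexed by $t$. Concretely, for each $t>0$ let $\nu_t$ denote the pushforward under $\bm x \mapsto M\iota(n(\bm x)\Phi^t)$ of the probability measure $\lambda$; these are Borel probability measures on $X = \mathrm{SL}_d(\mathbb{Q})\backslash\mathrm{SL}_d(\mathbb{A})$, and \autoref{sladelicequid} says precisely that $\int f\, d\nu_t \to \int f\, d\mu$ for every bounded continuous $f$, i.e. $\nu_t \to \mu$ weakly as $t\to\infty$. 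The quantities in the statement are $\nu_t(\mathcal{E})$, and we want $\limsup_t \nu_t(\mathcal{E}) \le \mu(\overline{\mathcal{E}})$ and $\liminf_t \nu_t(\mathcal{E}) \ge \mu(\accentset{\circ}{\mathcal{E}})$. Since $\nu_t(\mathcal{E}) = 1 - \nu_t(X\setminus\mathcal{E})$ and similarly for $\mu$, and since $X\setminus\accentset{\circ}{\mathcal{E}} = \overline{X\setminus\mathcal{E}}$, the two inequalities are equivalent, so it suffices to prove the $\limsup$ bound for closed sets (equivalently the $\liminf$ bound for open sets, then restrict/pass to $\mathcal{E}$ sitting between $\accentset{\circ}{\mathcal{E}}$ and $\overline{\mathcal{E}}$ and use $\nu_t(\accentset{\circ}{\mathcal{E}}) \le \nu_t(\mathcal{E}) \le \nu_t(\overline{\mathcal{E}})$).

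For the closed-set bound, I would argue as follows. Let $F = \overline{\mathcal{E}}$, which is closed in $X$. Fix $\varepsilon > 0$. Since $X$ is a locally compact Hausdorff space (being an inverse limit of such, by \autoref{SLhomeo}) and $\mu$ is a Radon probability measure, by outer regularity there is an open set $U \supseteq F$ with $\mu(U) \le \mu(F) + \varepsilon$. Using Urysohn's lemma in this locally compact Hausdorff setting — or, to stay inside the function class handled by \autoref{sladelicequid}, a bounded continuous function — choose a continuous function $f\colon X \to [0,1]$ with $f \equiv 1$ on $F$ and $f \equiv 0$ outside $U$; if one prefers to land in $\mathcal{C}_0(X)$ one first shrinks $U$ to have compact closure when $F$ is compact, but in general the $\mathcal{C}_b$ version of \autoref{sladelicequid} (which is what is actually stated) removes any such worry. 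Then $\mathbf{1}_{\mathcal{E}} \le \mathbf{1}_F \le f$, so $\nu_t(\mathcal{E}) \le \int_X f\, d\nu_t$ for every $t$. Taking $\limsup_{t\to\infty}$ and applying \autoref{sladelicequid} to $f$ gives
\[
\limsup_{t\to\infty} \nu_t(\mathcal{E}) \le \lim_{t\to\infty}\int_X f\, d\nu_t = \int_X f\, d\mu \le \mu(U) \le \mu(F) + \varepsilon = \mu(\overline{\mathcal{E}}) + \varepsilon.
\]
Letting $\varepsilon \to 0$ yields the first displayed inequality. The second then follows by applying the first to the complement: $\liminf_t \nu_t(\mathcal{E}) = 1 - \limsup_t \nu_t(X\setminus\mathcal{E}) \ge 1 - \mu(\overline{X\setminus\mathcal{E}}) = 1 - \mu(X\setminus\accentset{\circ}{\mathcal{E}}) = \mu(\accentset{\circ}{\mathcal{E}})$, using that $\mu$ is a probability measure and that the closure of the complement is the complement of the interior.

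The only genuinely delicate point is making sure the approximating function $f$ can be taken within the class for which \autoref{sladelicequid} is available. Since that theorem is stated for all bounded continuous functions (the proof in the excerpt already bootstraps from $\mathcal{C}_0$ to $\mathcal{C}_b$), an Urysohn function $f\colon X\to[0,1]$ separating the closed set $F$ from the closed set $X\setminus U$ suffices, and such a function exists because $X$ is locally compact Hausdorff, hence completely regular (indeed normal on the relevant pieces). I would spell out the Urysohn step explicitly for the reader and flag that the $\mathcal{C}_b$ version of \autoref{sladelicequid} is exactly what makes the argument go through without fussing over compactness of supports. Everything else — monotonicity of measures, the complementation identity, sending $\varepsilon\to 0$ — is routine.
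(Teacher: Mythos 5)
Your argument is correct and is essentially the paper's own proof: both interpret the quantity as $\nu_t(\mathcal{E})$ for the pushforward measures $\nu_t$, use $\mathcal{E}\subset\overline{\mathcal{E}}$, and deduce the $\limsup$ bound from \autoref{sladelicequid} via the portmanteau theorem, obtaining the $\liminf$ bound by complementation. The only difference is that the paper simply cites the portmanteau theorem, whereas you reprove the closed-set inequality from scratch via outer regularity and a Urysohn function --- a harmless (and arguably welcome) expansion of the same step.
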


\begin{proof}
We write
\begin{equation}  \lambda( \{ \bm x \in \mathbb{R}^{d-1} \, : \, M \iota(n(\bm x) \Phi^t) \in \mathcal{E} \}) = \int_{\mathbb{R}^{d-1}} \chi_\mathcal{E}(M \iota(n(\bm x) \Phi^t)) d\lambda(\bm x). \end{equation}
Since $\mathcal{E} \subset \overline{\mathcal{E}}$, we have 
\begin{equation} \lambda( \{ \bm x \in \mathbb{R}^{d-1} \, : \, M \iota(n(\bm x) \Phi^t) \in \mathcal{E} \}) \le \int_{\mathbb{R}^{d-1}} \chi_{\overline{\mathcal{E}}}(M \iota(n(\bm x) \Phi^t)) d\lambda(\bm x). \end{equation} 
It now follows from \autoref{sladelicequid} in conjunction with the portmanteau theorem ($\overline{\mathcal{E}}$ is closed) that \begin{equation} \limsup_{t \to \infty} \lambda( \{ \bm x \in \mathbb{R}^{d-1} \, : \, M \iota(n(\bm x) \Phi^t) \in \mathcal{E} \}) \le \mu_{\mathrm{SL}_d(\mathbb{Q}) \backslash \mathrm{SL}_d(\mathbb{A})}(\overline{\mathcal{E}}). \end{equation}
The lower bound is obtained in a similar fashion.
\end{proof}
In fact, to obtain \autoref{mainthm} as announced in the introduction, the relevant equidistribution theorem is the following spherical equidistribution result:
\begin{thm} \label{sladelicequidrot} For every bounded continuous function $f$ on $\mathrm{SL}_d(\mathbb{Q})\backslash\mathrm{SL}_d(\mathbb{A})$, every $M \in \mathrm{SL}_d(\mathbb{A})$ and every Borel probability measure $\lambda$ on $S^{d-1}$ which is absolutely continuous with respect to the Lebesgue measure,
\[ \lim_{t \to \infty} \int_{S^{d-1}} f(M \iota(K(\bm v) \Phi^t)) d \lambda(\bm v) = \int_{\mathrm{SL}_d(\mathbb{Q}) \backslash \mathrm{SL}_d(\mathbb{A})} f d \mu_{\mathrm{SL}_d(\mathbb{Q}) \backslash \mathrm{SL}_d(\mathbb{A})}. \] \end{thm}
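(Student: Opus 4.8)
\emph{Strategy.} I would deduce \autoref{sladelicequidrot} from \autoref{sladelicequid}: after a change of variables on the sphere, an expanding spherical average becomes, as $t\to\infty$, an expanding horospherical average up to a bounded correction that collapses in the limit. Exactly as at the end of the proof of \autoref{sladelicequid}, one first reduces to $f\in\mathcal C_0(X)$, $X=\mathrm{SL}_d(\mathbb{Q})\backslash\mathrm{SL}_d(\mathbb{A})$ (the sandwich between functions constant off a compact set works verbatim, all measures being probabilities); such an $f$ is uniformly continuous, i.e.\ for every $\varepsilon>0$ there is a neighbourhood $W$ of the identity of $\mathrm{SL}_d(\mathbb{A})$ with $|f(xg)-f(x)|<\varepsilon$ for all $x\in X$, $g\in W$. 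Fix $\varepsilon>0$. The map $K$ is smooth away from a single point $\bm v_0$; since $\lambda\ll\mathrm{Leb}$, the part of the integral over a small geodesic ball $B(\bm v_0,\eta)$ is at most $\|f\|_\infty\lambda(B(\bm v_0,\eta))$ uniformly in $t$, hence $\le\varepsilon$ for $\eta$ small. By a partition of unity on $S^{d-1}\setminus B(\bm v_0,\eta)$ it then suffices to prove: if $\lambda$ is absolutely continuous and supported in one small geodesic ball $\mathcal U$ around a point $\bm v_1\ne\bm v_0$, then $\int_{S^{d-1}}f(M\iota(K(\bm v)\Phi^t))\,d\lambda(\bm v)\to\lambda(\mathcal U)\int_X f\,d\mu$, with $\mathcal U$ as small as we like.

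\emph{Working in the chart.} Put $R:=K(\bm v_1)^{-1}\in\mathrm{SO}(d)$, so $\bm e_1 R=\bm v_1$; for $\bm v=\bm wR$ with $\bm w$ near $\bm e_1$ set $\widetilde K(\bm w):=R\,K(\bm wR)$, a smooth map with $\bm w\widetilde K(\bm w)=\bm e_1$ and $\widetilde K(\bm e_1)=I_d$. Since $K(\bm v)\Phi^t=R^{-1}\widetilde K(\bm w)\Phi^t$, the constant $R^{-1}$ is absorbed into the base point ($M\rightsquigarrow M\iota(R^{-1})$, legitimate because \autoref{sladelicequid} holds for arbitrary base point). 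As $\widetilde K(\bm e_1)=I_d$ lies in the dense open set on which the relevant Gauss-type factorization is available (lower-right $(d-1)\times(d-1)$ block invertible), on a small enough $\mathcal U$ one has $\widetilde K(\bm w)=n(\bm x(\bm w))\,\bar P(\bm w)$ with $\bar P(\bm w)$ block lower-triangular, $\bar P(\bm e_1)=I_d$, and $\bm w\mapsto\bm x(\bm w)$ a diffeomorphism onto a neighbourhood of $\bm 0\in\mathbb{R}^{d-1}$ (its differential at $\bm e_1$ is invertible). Write $\bar P=\bar P_{\mathrm d}\,\bar n$ with $\bar P_{\mathrm d}$ block-diagonal and $\bar n$ lower unipotent. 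The key point is that conjugation by $\Phi^t$ contracts the lower unipotent, $\bar n(\bm u)\,\Phi^t=\Phi^t\,\bar n(e^{-dt}\bm u)$, while $\bar P_{\mathrm d}$ commutes with $\Phi^t$, so that
\[ \widetilde K(\bm w)\,\Phi^t \;=\; \bigl(n(\bm x(\bm w))\,\Phi^t\bigr)\,\bigl(\bar P_{\mathrm d}(\bm w)\,\bar n(e^{-dt}\bm u(\bm w))\bigr), \]
the second factor containing no $\Phi^t$. As $t\to\infty$ this second factor converges uniformly over $\overline{\mathcal U}$ to $\bar P_{\mathrm d}(\bm w)$, which, $\mathcal U$ being small, stays uniformly close to $\bar P_{\mathrm d}(\bm e_1)=I_d$; hence for $\mathcal U$ small and $t$ large it lies in $W$, and uniform continuity lets us drop it: $f(M\iota(\widetilde K(\bm w)\Phi^t))=f(M\iota(n(\bm x(\bm w))\Phi^t))+O(\varepsilon)$ uniformly in $\bm w\in\mathcal U$.

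\emph{Conclusion.} Changing variables $\bm w\mapsto\bm x$ turns $\int_{\mathcal U}f(M\iota(n(\bm x(\bm w))\Phi^t))\,d\lambda(\bm w)$ into $\int_{\mathbb{R}^{d-1}}f(M\iota(n(\bm x)\Phi^t))\,d\lambda'(\bm x)$ with $\lambda'$ absolutely continuous of total mass $\lambda(\mathcal U)$; normalising and applying \autoref{sladelicequid} gives the limit $\lambda(\mathcal U)\int_X f\,d\mu$. Summing the finitely many chart contributions (plus the $O(\varepsilon)$ from the ball about $\bm v_0$) and letting $\varepsilon\to0$ finishes the $\mathcal C_0$ case, and the sandwich argument passes to bounded continuous $f$.

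\emph{Main obstacle.} The delicate point is the second step: perturbations of the base point that are \emph{followed} by $\Phi^t$ get conjugated by the expanding flow and cannot be controlled by uniform continuity, so the entire purpose of the Gauss-type factorization is to move every harmless factor to the \emph{right} of $n(\bm x)\Phi^t$, where conjugation by $\Phi^t$ either kills it (the $e^{-dt}$) or never enters, while the one persistent factor $R^{-1}$ is constant and is swallowed by the base point that \autoref{sladelicequid} permits; one must also check that each change of variables preserves absolute continuity and that, although the cover of $S^{d-1}\setminus B(\bm v_0,\eta)$ may depend on $\varepsilon$, it stays finite. Alternatively, the whole reduction takes place inside $\mathrm{SL}_d(\mathbb{R})$, so it equally deduces the analogous statement on each $\Gamma(N)\backslash\mathrm{SL}_d(\mathbb{R})$ from \eqref{mixing} --- essentially the spherical equidistribution of \cite{marklof_strombergsson_free_path_length_2010} --- after which \autoref{sladelicequidrot} follows from the inverse-limit argument in the proof of \autoref{sladelicequid} word for word.
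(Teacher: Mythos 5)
Your argument is correct, but the route taken in your main body is genuinely different from (and much more self-contained than) the paper's. The paper's proof of \autoref{sladelicequidrot} is essentially a two-line modification of the proof of \autoref{sladelicequid}: it replaces the horospherical input \eqref{mixing} on each floor $\Gamma(N)\backslash\mathrm{SL}_d(\mathbb{R})$ by the real spherical equidistribution theorem of \cite[Section 5.4]{marklof_strombergsson_free_path_length_2010}, sets $M_t(\bm v)=M_\infty K(\bm v)\Phi^t$, and reruns the inverse-limit/$\mathcal C_0$-approximation argument verbatim. You instead deduce the adelic spherical statement directly from the adelic horospherical \autoref{sladelicequid}, by localizing on the sphere, factorizing $\widetilde K(\bm w)=n(\bm x(\bm w))\,\bar P_{\mathrm d}(\bm w)\,\bar n(\bm u(\bm w))$, and pushing every non-horospherical factor to the right of $\Phi^t$, where the lower unipotent is contracted at rate $e^{-dt}$ and the block-diagonal part is absorbed by right uniform continuity of $f\in\mathcal C_0(X)$ after shrinking the chart; this is in substance a reproof of the Marklof--Str\"ombergsson spherical-from-horospherical reduction, transplanted to the adelic quotient, and it goes through because the entire chart manipulation lives in the real place while \autoref{sladelicequid} permits an arbitrary adelic base point (which swallows the constant rotation $R^{-1}$). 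The details you flag do check out: the conjugation identities are correct, the differential of $\bm w\mapsto\bm x(\bm w)$ at $\bm e_1$ is invertible so the change of variables preserves absolute continuity, and the cover of $S^{d-1}\setminus B(\bm v_0,\eta)$ is finite by compactness. Your approach buys independence from the exact form of the cited real-variable result at the cost of length; the paper's buys brevity at the cost of an external citation. Your closing alternative --- run the same reduction inside each $\Gamma(N)\backslash\mathrm{SL}_d(\mathbb{R})$ and then lift through the inverse limit --- is precisely the paper's proof.
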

\begin{proof}
The proof follows the same lines as that of \autoref{sladelicequid}, relying on the fact (see \cite[Section 5.4]{marklof_strombergsson_free_path_length_2010})
that for every Borel probability measure $\lambda$ on $S^{d-1}$ which is absolutely continuous with respect to the Lebesgue measure, every $N \in \mathbb{N}$, every bounded continuous  $f \colon \mathrm{SL}_d(\mathbb{Z}) \backslash \mathrm{SL}_d(\mathbb{R}) \to \mathbb{R}$ and every $M_\infty \in \mathrm{SL}_d(\mathbb{R})$,
\[ \lim_{t \to \infty} \int_{S^{d-1}} f(M K(\bm v) \Phi^t) d\lambda(\bm v) = \int_{\mathrm{SL}_d(\mathbb{Z}) \backslash \mathrm{SL}_d(\mathbb{R})} f d\mu_{\mathrm{SL}_d(\mathbb{Z}) \backslash \mathrm{SL}_d(\mathbb{R})}. \]
instead of \eqref{mixing}. We define $M_t(\bm v) = M_\infty K(\bm v) \Phi^t$ this time and go through with the argument.
\end{proof}

We also state its corollary, which can be proved using \autoref{sladelicequidrot} in the same way as \autoref{limsupliminf} was proved using \autoref{sladelicequid}.

\begin{cor}\label{limsupliminfrot} For every $M \in \mathrm{SL}_d(\mathbb{A})$, every Borel probability measure $\lambda$ on $S^{d-1}$ which is absolutely continuous with respect to the Lebesgue measure on $S^{d-1}$ and every measurable subset $\mathcal{E} \subset \mathrm{SL}_d(\mathbb{Q}) \backslash \mathrm{SL}_d(\mathbb{A})$, we have 
\begin{equation} \limsup_{t \to \infty} \lambda( \{ \bm v \in S^{d-1} \, : \, M \iota(K(\bm v) \Phi^t) \in \mathcal{E} \}) \le \mu_{\mathrm{SL}_d(\mathbb{Q}) \backslash \mathrm{SL}_d(\mathbb{A})}(\overline{\mathcal{E}}) \end{equation}
and 
\begin{equation} \liminf_{t \to \infty} \lambda( \{ \bm v \in S^{d-1} \, : \, M \iota(K(\bm v) \Phi^t) \in \mathcal{E} \}) \ge \mu_{\mathrm{SL}_d(\mathbb{Q}) \backslash \mathrm{SL}_d(\mathbb{A})}(\accentset{\circ}{\mathcal{E}}). \end{equation} \end{cor}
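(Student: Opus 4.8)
The plan is to mimic exactly the derivation of \autoref{limsupliminf} from \autoref{sladelicequid}, replacing the horospherical equidistribution statement with its spherical counterpart \autoref{sladelicequidrot}. First I would write, for a measurable subset $\mathcal{E} \subset \mathrm{SL}_d(\mathbb{Q}) \backslash \mathrm{SL}_d(\mathbb{A})$,
\[ \lambda(\{ \bm v \in S^{d-1} \, : \, M \iota(K(\bm v) \Phi^t) \in \mathcal{E} \}) = \int_{S^{d-1}} \chi_\mathcal{E}(M \iota(K(\bm v) \Phi^t)) \, d\lambda(\bm v), \]
and then bound $\chi_{\mathcal{E}} \le \chi_{\overline{\mathcal{E}}}$ pointwise, so that the left-hand side is at most $\int_{S^{d-1}} \chi_{\overline{\mathcal{E}}}(M \iota(K(\bm v) \Phi^t)) \, d\lambda(\bm v)$.

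The key step is then to combine \autoref{sladelicequidrot} with the portmanteau theorem. Since $\overline{\mathcal{E}}$ is closed, the portmanteau theorem (the direction asserting $\limsup_t \nu_t(C) \le \nu(C)$ for closed $C$ when $\nu_t \to \nu$ weakly) applied to the family of pushforward measures $\nu_t$ on $\mathrm{SL}_d(\mathbb{Q}) \backslash \mathrm{SL}_d(\mathbb{A})$ defined by $\nu_t(B) = \lambda(\{ \bm v \in S^{d-1} \, : \, M \iota(K(\bm v) \Phi^t) \in B \})$ — which converge weakly to $\mu_{\mathrm{SL}_d(\mathbb{Q}) \backslash \mathrm{SL}_d(\mathbb{A})}$ by \autoref{sladelicequidrot} — yields
\[ \limsup_{t \to \infty} \lambda(\{ \bm v \in S^{d-1} \, : \, M \iota(K(\bm v) \Phi^t) \in \mathcal{E} \}) \le \mu_{\mathrm{SL}_d(\mathbb{Q}) \backslash \mathrm{SL}_d(\mathbb{A})}(\overline{\mathcal{E}}). \]
For the lower bound I would instead use $\chi_{\mathcal{E}} \ge \chi_{\accentset{\circ}{\mathcal{E}}}$ with $\accentset{\circ}{\mathcal{E}}$ open, and invoke the open-set direction of the portmanteau theorem, $\liminf_t \nu_t(O) \ge \nu(O)$.

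I do not expect any genuine obstacle here: everything is a formal consequence of weak convergence. The only point requiring a word of care is the applicability of the portmanteau theorem, which is standard once one notes that $\mathrm{SL}_d(\mathbb{Q}) \backslash \mathrm{SL}_d(\mathbb{A})$ is a locally compact Hausdorff (indeed Polish-like, being an inverse limit of nice spaces) space and that $\mu_{\mathrm{SL}_d(\mathbb{Q}) \backslash \mathrm{SL}_d(\mathbb{A})}$ is a Borel probability measure, so that $\chi_{\mathcal{E}}$ for arbitrary measurable $\mathcal{E}$ is pinned between the open set $\accentset{\circ}{\mathcal{E}}$ and the closed set $\overline{\mathcal{E}}$; since \autoref{sladelicequidrot} is stated for all bounded continuous $f$, weak convergence in the portmanteau sense holds. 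The proof is therefore nearly identical to that of \autoref{limsupliminf}, and I would simply state it as such: ``The proof is identical to that of \autoref{limsupliminf}, using \autoref{sladelicequidrot} in place of \autoref{sladelicequid}.''
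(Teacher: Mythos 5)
Your proposal is correct and matches the paper's argument exactly: the paper likewise derives \autoref{limsupliminfrot} by sandwiching $\chi_{\mathcal{E}}$ between $\chi_{\accentset{\circ}{\mathcal{E}}}$ and $\chi_{\overline{\mathcal{E}}}$ and invoking \autoref{sladelicequidrot} together with the portmanteau theorem, just as \autoref{limsupliminf} was obtained from \autoref{sladelicequid}. No gaps.
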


\subsection{On the space of affine adelic lattices}

We now state the equidistribution theorems for affine adelic lattices. These follow from the ones in \cite{marklof_strombergsson_free_path_length_2010} in the case of $\bm \alpha \in \mathbb{R}^d \setminus \mathbb{Q}^d$, thanks to the reduction strategy developed in the proof of \autoref{sladelicequid} and \autoref{ASLhomeo}.

\begin{thm}\label{asladelicequid} For every bounded continuous function $f$ on $\mathrm{ASL}_d(\mathbb{Q})\backslash\mathrm{ASL}_d(\mathbb{A})$, every $M \in \mathrm{SL}_d(\mathbb{A})$, every Borel probability measure $\lambda$ on $\mathbb{R}^{d-1}$ which is absolutely continuous with respect to the Lebesgue measure on $\mathbb{R}^{d-1}$ and every $\bm \alpha \in \mathbb{A}^d$,
\[  \nu_{\bm \alpha}(f) = \lim_{t \to \infty} \int_{\mathbb{R}^{d-1}} f((I_d, \bm \alpha) (M, \bm 0) \iota((n(\bm x) \Phi^t, \bm 0))) d \lambda(\bm x) \]
exists and is given by
\[ \nu_{\bm \alpha}(f) = \begin{cases}  \int_{\mathrm{ASL}_d(\mathbb{Q}) \backslash \mathrm{ASL}_d(\mathbb{A})} f d \mu_{\mathrm{ASL}_d(\mathbb{Q}) \backslash \mathrm{ASL}_d(\mathbb{A})} &\mbox{if } \bm \alpha_\infty \in \mathbb{R}^d \setminus \mathbb{Q}^d \\
\int_{\mathrm{SL}_d(\mathbb{Q}) \backslash \mathrm{SL}_d(\mathbb{A})} f(g (I_d, (\bm 0, \bm \beta M_f))) d\mu_{\mathrm{SL}_d(\mathbb{Q}) \backslash \mathrm{SL}_d(\mathbb{A})}(g) &\mbox{if } \bm \alpha_\infty \in \mathbb{Q}^d \end{cases}, \] 
where $\bm \beta = \bm \alpha_f - \delta(\bm \alpha_\infty)_f$. \end{thm}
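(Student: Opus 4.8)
The strategy is to imitate the proof of \autoref{sladelicequid}: transfer the problem through the inverse-limit description of \autoref{ASLhomeo} down to the finite-level spaces $Y_N := (\Gamma(N) \ltimes N\mathbb{Z}^d) \backslash \mathrm{ASL}_d(\mathbb{R})$, on which the dichotomy between rational and irrational translation parameters is already understood. Throughout, write $Y := \mathrm{ASL}_d(\mathbb{Q}) \backslash \mathrm{ASL}_d(\mathbb{A})$, let $\mu_Y$, $\mu_N$ denote the Haar probability measures on $Y$, $Y_N$, split $\bm\alpha = (\bm\alpha_\infty, \bm\alpha_f)$ and $M = (M_\infty, M_f)$ into archimedean and finite parts, and set $g_t(\bm x) := (I_d, \bm\alpha)(M, \bm 0)\,\iota((n(\bm x)\Phi^t, \bm 0))$. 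The multiplication law of $\mathrm{ASL}_d$ gives $g_t(\bm x) = (M\iota(n(\bm x)\Phi^t),\, \bm\alpha M \iota(n(\bm x)\Phi^t))$; since $\iota(n(\bm x)\Phi^t)$ is trivial at every finite place, the component of $g_t(\bm x)$ at each place $v \ne \infty$ is the \emph{fixed} element $(M_v, \bm\alpha_v M_v)$ of $\mathrm{ASL}_d(\mathbb{Q}_v)$, independent of $t$ and $\bm x$.

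\textbf{The case $\bm\alpha_\infty \in \mathbb{Q}^d$.} Here everything collapses to \autoref{sladelicequid}. Since $(I_d, \delta(-\bm\alpha_\infty)) \in \mathrm{ASL}_d(\mathbb{Q})$, modulo $\mathrm{ASL}_d(\mathbb{Q})$ we may replace $(I_d, \bm\alpha)$ by $(I_d, (\bm 0, \bm\beta))$ with $\bm\beta = \bm\alpha_f - \delta(\bm\alpha_\infty)_f$, and a short further manipulation (using again that $\iota(n(\bm x)\Phi^t)$ is trivial at the finite places) yields
\[ g_t(\bm x) \equiv (M\iota(n(\bm x)\Phi^t),\, \bm 0)\,(I_d, (\bm 0, \bm\beta M_f)) \pmod{\mathrm{ASL}_d(\mathbb{Q})}. \]
Hence $\int_{\mathbb{R}^{d-1}} f(g_t(\bm x))\, d\lambda(\bm x) = \int_{\mathbb{R}^{d-1}} h(M\iota(n(\bm x)\Phi^t))\, d\lambda(\bm x)$, where $h(g) := f\big((g, \bm 0)(I_d, (\bm 0, \bm\beta M_f))\big)$ is a well-defined bounded continuous function on $\mathrm{SL}_d(\mathbb{Q}) \backslash \mathrm{SL}_d(\mathbb{A})$ (well-definedness because left translation by $(\delta(\gamma), \bm 0)$, $\gamma \in \mathrm{SL}_d(\mathbb{Q})$, does not change the class in $Y$). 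Applying \autoref{sladelicequid} to $h$ and $M$ gives exactly the limit $\int_{\mathrm{SL}_d(\mathbb{Q}) \backslash \mathrm{SL}_d(\mathbb{A})} f(g(I_d, (\bm 0, \bm\beta M_f)))\, d\mu_{\mathrm{SL}_d(\mathbb{Q}) \backslash \mathrm{SL}_d(\mathbb{A})}(g)$ claimed in this case.

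\textbf{The case $\bm\alpha_\infty \notin \mathbb{Q}^d$.} Now I would run the reduction of \autoref{sladelicequid} verbatim. By the $\mathcal{C}_0$-to-$\mathcal{C}_b$ sandwiching argument at the end of that proof it suffices to treat $f \in \mathcal{C}_0(Y)$, and by the affine analogue of \autoref{SLindlim} such an $f$ is approximated uniformly within $\varepsilon/3$ by $f_N \circ \pi_N$ for a suitable $N$ and some $f_N \in \mathcal{C}_0(Y_N)$. Because the finite part of $g_t(\bm x)$ does not depend on $t$ or $\bm x$, \autoref{ASLhomeo} (strong approximation for $\mathrm{ASL}_d$) provides a single $(\gamma, \bm r) \in \mathrm{ASL}_d(\mathbb{Q})$ whose left translate carries that finite part into the compact open subgroup $K(N) \ltimes (N\widehat{\mathbb{Z}})^d$; reading off the archimedean component then shows
\[ \pi_N(g_t(\bm x)) = \big[\,(I_d, \bm c)\,(\gamma M_\infty, \bm 0)\,(n(\bm x)\Phi^t, \bm 0)\,\big] \in Y_N, \qquad \bm c := (\bm r + \bm\alpha_\infty)\gamma^{-1}, \]
and $\bm c \notin \mathbb{Q}^d$ precisely because $\bm\alpha_\infty \notin \mathbb{Q}^d$ and $\gamma \in \mathrm{SL}_d(\mathbb{Q})$. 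This is an expanding horosphere translate in $Y_N$ with \emph{irrational} shift $\bm c$, so the Marklof--Str\"ombergsson equidistribution theorem \cite[Section 5]{marklof_strombergsson_free_path_length_2010} (in the irrational-shift regime, for the congruence cover $Y_N$) gives $\int_{\mathbb{R}^{d-1}} f_N(\pi_N(g_t(\bm x)))\, d\lambda(\bm x) \to \int_{Y_N} f_N\, d\mu_N = \int_Y f_N \circ \pi_N\, d\mu_Y$ as $t \to \infty$. Combining the two $\varepsilon/3$ estimates with $\mu_N = (\pi_N)_* \mu_Y$, exactly as in \autoref{sladelicequid}, identifies the limit with $\int_Y f\, d\mu_Y$.

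The only genuinely delicate step is the bookkeeping in the second case: one must verify that the finite part of $g_t(\bm x)$ can indeed be absorbed into $K(N) \ltimes (N\widehat{\mathbb{Z}})^d$ by a single $\mathbb{Q}$-rational element independent of $t$ and $\bm x$ --- so that $\pi_N(g_t(\bm x))$ is a bona fide expanding-horosphere translate in $Y_N$ to which the Marklof--Str\"ombergsson theorem applies --- and that the induced archimedean shift $\bm c$ inherits the irrationality of $\bm\alpha_\infty$ at \emph{every} level $N$, this last point being what secures full equidistribution on each $Y_N$, hence on the inverse limit $Y$. Everything else is the formal apparatus already set up for \autoref{sladelicequid}.
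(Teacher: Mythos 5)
Your proposal is correct and follows essentially the same route as the paper: the rational case is reduced to \autoref{sladelicequid} via exactly the same left-translation by $(I_d,-\delta(\bm\alpha_\infty))$ and the identity $(I_d,(\bm 0,\bm\beta))(M,\bm 0)\iota((n(\bm x)\Phi^t,\bm 0)) = (M\iota(n(\bm x)\Phi^t),\bm 0)(I_d,(\bm 0,\bm\beta M_f))$, while the irrational case is reduced through \autoref{ASLhomeo} and the $\mathcal{C}_0$-approximation to Marklof--Str\"ombergsson's Theorem 5.2 on each congruence cover $\widetilde{X_N}$. Your only addition is to make explicit the strong-approximation bookkeeping (absorbing the fixed finite part into $K(N)\ltimes(N\widehat{\mathbb{Z}})^d$ and checking the induced archimedean shift $\bm c$ stays irrational), which the paper leaves implicit in the phrase ``the claim boils down to''.
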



\begin{proof}
We begin with the case of $\bm \alpha_\infty \in \mathbb{R}^d \setminus \mathbb{Q}^d$.

We define for $N \in \mathbb{N}$, $\widetilde{X_N} = (\Gamma(N) \ltimes N \mathbb{Z}^d) \backslash \mathrm{ASL}_d(\mathbb{R})$ (where $\Gamma(N)$ is a principal congruence subgroup in $\mathrm{SL}_d(\mathbb{Z})$) and denote by $\widetilde{\mu_N}$ the corresponding Haar measure.
Following the proof of \autoref{sladelicequid} and using \autoref{ASLhomeo} instead of \autoref{SLhomeo}, the claim boils down to this statement: for every $M_\infty \in \mathrm{SL}_d(\mathbb{R})$, every $\bm \alpha_\infty \in \mathbb{R}^d \setminus \mathbb{Q}^d$, every $N \in \mathbb{N}$ and every $g \in C_b(\widetilde{X_N})$,
\[ \lim_{t \to \infty} \int_{\mathbb{R}^{d-1}} g((I_d, \bm \alpha_\infty)(M_\infty, \bm 0)(n(\bm x) \Phi^t, \bm 0)) d\lambda(\bm x) = \int_{\widetilde{X_N}} g d\widetilde{\mu_N}. \]
This is however the content of \cite[Theorem 5.2]{marklof_strombergsson_free_path_length_2010}.

If $\bm \alpha_\infty \in \mathbb{Q}^d$, then for $\Gamma = \mathrm{ASL}_d(\mathbb{Q})$ we have $(I_d, \delta(\bm \alpha_\infty)) \in \Gamma$ and thus
\begin{align}  &\Gamma(I_d, \bm \alpha) (M, \bm 0) \iota((n(\bm x) \Phi^t, \bm 0)) \\
&= \Gamma(I_d, -\delta(\bm \alpha_\infty))(I_d, \bm \alpha) M \iota(n(\bm x) \Phi^t)  \\
&=  \Gamma \iota(M_\infty n(\bm x) \Phi^t) (I_d, (\bm 0, \bm \beta)) (I_d, M_f)  \end{align}
where $\bm \beta$ is defined as above. We can therefore conclude that this case reduces to \autoref{sladelicequid} upon observing that \[ (I_d, (\bm 0, \bm \beta)) (I_d, M_f) = (I_d, M_f)(I_d, \bm \beta M_f). \]

\end{proof}

We once again state the following consequence as a separate corollary.

\begin{cor}
\label{limsupliminfasl} For every $M \in \mathrm{SL}_d(\mathbb{A})$, every Borel probability measure $\lambda$ on $\mathbb{R}^{d-1}$ which is absolutely continuous with respect to the Lebesgue measure on $\mathbb{R}^{d-1}$, every measurable subset $\mathcal{E} \subset \mathrm{ASL}_d(\mathbb{Q}) \backslash \mathrm{ASL}_d(\mathbb{A})$ and every $\bm \alpha \in \mathbb{A}^d$, we have 
\begin{align*} & \limsup_{t \to \infty} \lambda( \{ \bm x \in \mathbb{R}^{d-1} \, : \, (I_d, \bm \alpha) (M, \bm 0) \iota((n(\bm x) \Phi^t, \bm 0)) \in \mathcal{E} \}) \\ &\le  \begin{cases} \mu_{\mathrm{ASL}_d(\mathbb{Q}) \backslash \mathrm{ASL}_d(\mathbb{A})}(\overline{\mathcal{E}}) &\mbox{if } \bm \alpha_\infty \in \mathbb{R}^d \setminus \mathbb{Q}^d \\
\mu_{\mathrm{SL}_d(\mathbb{Q}) \backslash \mathrm{SL}_d(\mathbb{A})}(\{ g \in \mathrm{SL}_d(\mathbb{Q}) \backslash \mathrm{SL}_d(\mathbb{A}) \, : \, g (I_d, (\bm 0, \bm \beta M_f)) \in \overline{\mathcal{E}} \})  &\mbox{if } \bm \alpha_\infty \in \mathbb{Q}^d \end{cases} 
\end{align*}
and 
\begin{align*} &\liminf_{t \to \infty} \lambda( \{ \bm x \in \mathbb{R}^{d-1} \, : \, (I_d, \bm \alpha) (M, \bm 0) \iota((n(\bm x) \Phi^t, \bm 0)) \in \mathcal{E} \}) \\
&\ge \begin{cases} \mu_{\mathrm{ASL}_d(\mathbb{Q}) \backslash \mathrm{ASL}_d(\mathbb{A})}(\accentset{\circ}{\mathcal{E}}) &\mbox{if } \bm \alpha_\infty \in \mathbb{R}^d \setminus \mathbb{Q}^d \\
\mu_{\mathrm{SL}_d(\mathbb{Q}) \backslash \mathrm{SL}_d(\mathbb{A})}(\{ g \in \mathrm{SL}_d(\mathbb{Q}) \backslash \mathrm{SL}_d(\mathbb{A}) \, : \, g (I_d, (\bm 0, \bm \beta M_f)) \in \accentset{\circ}{\mathcal{E}} \})  &\mbox{if } \bm \alpha_\infty \in \mathbb{Q}^d
, \end{cases} 
\end{align*}
where $\bm \beta = \bm \alpha_f - \delta(\bm \alpha_\infty)_f$.
\end{cor}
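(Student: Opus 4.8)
The plan is to deduce \autoref{limsupliminfasl} from \autoref{asladelicequid} exactly as \autoref{limsupliminf} was deduced from \autoref{sladelicequid}, i.e. by writing the measure of the set in question as an integral of a characteristic function and then sandwiching $\chi_{\mathcal E}$ between the continuous data controlled by the portmanteau theorem. Concretely, first I would observe that
\[
\lambda(\{ \bm x \in \mathbb{R}^{d-1} \, : \, (I_d, \bm\alpha)(M,\bm 0)\iota((n(\bm x)\Phi^t,\bm 0)) \in \mathcal E \}) = \int_{\mathbb{R}^{d-1}} \chi_{\mathcal E}\big((I_d,\bm\alpha)(M,\bm 0)\iota((n(\bm x)\Phi^t,\bm 0))\big)\, d\lambda(\bm x),
\]
and since $\mathcal E \subset \overline{\mathcal E}$ with $\overline{\mathcal E}$ closed, the integrand is bounded above by $\chi_{\overline{\mathcal E}}$. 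Because $\chi_{\overline{\mathcal E}}$ is upper semicontinuous, it can be approximated from above by bounded continuous functions, so applying \autoref{asladelicequid} to such approximants and passing to the limit (the portmanteau direction for closed sets) gives $\limsup_{t\to\infty}(\cdots) \le \nu_{\bm\alpha}(\chi_{\overline{\mathcal E}}{-}\text{style limit})$. Symmetrically, using $\accentset{\circ}{\mathcal E} \subset \mathcal E$ with $\accentset{\circ}{\mathcal E}$ open and $\chi_{\accentset{\circ}{\mathcal E}}$ lower semicontinuous yields the $\liminf$ bound.

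The one genuinely new point compared with \autoref{limsupliminf} is bookkeeping of the two cases in \autoref{asladelicequid}. When $\bm\alpha_\infty \in \mathbb{R}^d \setminus \mathbb{Q}^d$ the limiting functional is simply integration against $\mu_{\mathrm{ASL}_d(\mathbb{Q})\backslash\mathrm{ASL}_d(\mathbb{A})}$, so the argument is verbatim that of \autoref{limsupliminf} and produces $\mu_{\mathrm{ASL}_d(\mathbb{Q})\backslash\mathrm{ASL}_d(\mathbb{A})}(\overline{\mathcal E})$ and $\mu_{\mathrm{ASL}_d(\mathbb{Q})\backslash\mathrm{ASL}_d(\mathbb{A})}(\accentset{\circ}{\mathcal E})$ respectively. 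When $\bm\alpha_\infty \in \mathbb{Q}^d$, the limiting functional is $f \mapsto \int_{\mathrm{SL}_d(\mathbb{Q})\backslash\mathrm{SL}_d(\mathbb{A})} f(g(I_d,(\bm 0, \bm\beta M_f)))\, d\mu(g)$ with $\bm\beta = \bm\alpha_f - \delta(\bm\alpha_\infty)_f$; applying it to a closed set $\overline{\mathcal E}$ via $\chi_{\overline{\mathcal E}}(g(I_d,(\bm 0,\bm\beta M_f))) = \chi_{\{g \, : \, g(I_d,(\bm 0,\bm\beta M_f)) \in \overline{\mathcal E}\}}(g)$ gives exactly $\mu_{\mathrm{SL}_d(\mathbb{Q})\backslash\mathrm{SL}_d(\mathbb{A})}(\{ g \, : \, g(I_d,(\bm 0,\bm\beta M_f)) \in \overline{\mathcal E}\})$, and likewise with $\accentset{\circ}{\mathcal E}$ for the $\liminf$. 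Here one uses that right translation by the fixed element $(I_d,(\bm 0,\bm\beta M_f))$ is a homeomorphism of $\mathrm{SL}_d(\mathbb{Q})\backslash\mathrm{SL}_d(\mathbb{A})$, so the preimage of a closed (resp. open) set under this translation is again closed (resp. open), which is what makes the portmanteau step legitimate.

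The main --- and only mild --- obstacle is making the upper-semicontinuity approximation precise in this noncompact adelic setting: one wants, for a closed set $\overline{\mathcal E}$, a decreasing sequence $f_k \in \mathcal C_b$ with $f_k \downarrow \chi_{\overline{\mathcal E}}$ pointwise, so that dominated convergence (all spaces involved are probability spaces) lets us pass from $\int f_k$ to $\mu(\overline{\mathcal E})$ on the right-hand side while $\limsup_t \int \chi_{\mathcal E}(\cdots) d\lambda \le \limsup_t \int f_k(\cdots) d\lambda = \int f_k\, d\mu_{\text{limit}}$ for each $k$. Such $f_k$ are standard (e.g. $f_k(y) = \max(0, 1 - k\,\mathrm{dist}(y, \overline{\mathcal E}))$ for a metric inducing the topology, truncated to lie in $\mathcal C_b$), so this is routine; I would simply remark that the argument is ``entirely analogous'' to the proof of \autoref{limsupliminf}, spelling out only the case distinction and the translation-homeomorphism observation above.
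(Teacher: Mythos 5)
Your proposal is correct and follows the same route the paper intends: the paper proves this corollary exactly as \autoref{limsupliminf} was deduced from \autoref{sladelicequid}, namely by bounding $\chi_{\mathcal{E}}$ by $\chi_{\overline{\mathcal{E}}}$ (resp.\ below by $\chi_{\accentset{\circ}{\mathcal{E}}}$) and invoking the portmanteau theorem for the weak limit supplied by \autoref{asladelicequid}, with the rational case handled by recognising the limit functional as integration against the pushforward of $\mu_{\mathrm{SL}_d(\mathbb{Q})\backslash\mathrm{SL}_d(\mathbb{A})}$ under right translation by $(I_d,(\bm 0,\bm\beta M_f))$. Your explicit semicontinuous approximation and the translation-homeomorphism remark are just the standard proof of the portmanteau step spelled out, so there is no substantive difference.
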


As in the previous subsection, we have analogues for spherical equidistribution.
\begin{thm} \label{asladelicsphequid} For every bounded continuous function $f$ on $\mathrm{ASL}_d(\mathbb{Q})\backslash\mathrm{ASL}_d(\mathbb{A})$, every $M \in \mathrm{SL}_d(\mathbb{A})$, every Borel probability measure $\lambda$ on $S^{d-1}$ which is absolutely continuous with respect to the Lebesgue measure and every $\bm \alpha \in \mathbb{A}^d$,
\[ \lim_{t \to \infty} \int_{S^{d-1}} f((I_d, \bm \alpha) (M, \bm 0) \iota((K(\bm v) \Phi^t, 0)))) d \lambda(\bm v) \] exists and is also given by $\nu_{\bm \alpha}(f)$ as defined in \autoref{asladelicequid}. \end{thm}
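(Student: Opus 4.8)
The plan is to transcribe the proof of \autoref{asladelicequid}, replacing each horocyclic ingredient by its spherical counterpart: \autoref{sladelicequidrot} in place of \autoref{sladelicequid}, and the spherical finite-level equidistribution on $(\Gamma(N)\ltimes N\mathbb{Z}^d)\backslash\mathrm{ASL}_d(\mathbb{R})$ in place of \cite[Theorem 5.2]{marklof_strombergsson_free_path_length_2010}. First I would run the same reduction as in \autoref{sladelicequid}: using the $\mathrm{ASL}$-analogue of \autoref{SLindlim} together with \autoref{ASLhomeo}, it suffices to prove the statement for $f$ pulled back from a finite level $\widetilde{X_N}=(\Gamma(N)\ltimes N\mathbb{Z}^d)\backslash\mathrm{ASL}_d(\mathbb{R})$, the passage from such $f$ to a general bounded continuous $f$ being the $\varepsilon/3$-approximation followed by the $\mathcal{C}_0$-to-$\mathcal{C}_b$ sandwich already used there (all measures in sight are probability measures). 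Note that $K$ is smooth off a single point, which is $\lambda$-null since $\lambda$ is absolutely continuous, so the discontinuity never interferes with any of the limits.

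For $\bm\alpha_\infty\in\mathbb{R}^d\setminus\mathbb{Q}^d$ the reduced statement is: for every $M_\infty\in\mathrm{SL}_d(\mathbb{R})$, every $N$ and every $g\in\mathcal{C}_b(\widetilde{X_N})$,
\[ \lim_{t\to\infty}\int_{S^{d-1}} g\big((I_d,\bm\alpha_\infty)(M_\infty,\bm 0)(K(\bm v)\Phi^t,\bm 0)\big)\,d\lambda(\bm v)=\int_{\widetilde{X_N}} g\,d\widetilde{\mu_N}. \]
This is the spherical version of \cite[Theorem 5.2]{marklof_strombergsson_free_path_length_2010}; it follows from the horocyclic version by the same device used in \cite[Section 5.4]{marklof_strombergsson_free_path_length_2010} to pass from $n(\bm x)$-averages to $K(\bm v)$-averages, and it is the affine analogue of the $\mathrm{SL}$ statement already invoked in the proof of \autoref{sladelicequidrot}. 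Feeding it back through the approximation gives the irrational case.

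For $\bm\alpha_\infty\in\mathbb{Q}^d$ I would reuse, verbatim, the group-theoretic identity from the proof of \autoref{asladelicequid}: since $(I_d,\delta(\bm\alpha_\infty))\in\mathrm{ASL}_d(\mathbb{Q})$,
\begin{align*} \mathrm{ASL}_d(\mathbb{Q})\,(I_d,\bm\alpha)(M,\bm 0)\iota((K(\bm v)\Phi^t,\bm 0)) &=\mathrm{ASL}_d(\mathbb{Q})\,\iota(M_\infty K(\bm v)\Phi^t)\,(I_d,(\bm 0,\bm\beta))(I_d,M_f)\\ &=\mathrm{ASL}_d(\mathbb{Q})\,\iota(M_\infty K(\bm v)\Phi^t)\,(I_d,M_f)(I_d,(\bm 0,\bm\beta M_f)), \end{align*}
with $\bm\beta=\bm\alpha_f-\delta(\bm\alpha_\infty)_f$. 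Applying \autoref{sladelicequidrot} to the bounded continuous function $g\mapsto f(g\,(I_d,(\bm 0,\bm\beta M_f)))$ on $\mathrm{SL}_d(\mathbb{Q})\backslash\mathrm{SL}_d(\mathbb{A})$ then produces exactly the value $\nu_{\bm\alpha}(f)$ recorded in \autoref{asladelicequid}.

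The only point that is not a literal transcription of earlier arguments is the finite-level \emph{spherical} equidistribution used in the irrational case, so that is where I expect any real work to sit. If \cite{marklof_strombergsson_free_path_length_2010} does not record the affine spherical statement in precisely the form above, one proves it directly along the lines that yielded \autoref{sladelicequidrot} from \autoref{sladelicequid}: mixing of $\Phi^t$ on $\mathrm{ASL}_d(\mathbb{Z})\backslash\mathrm{ASL}_d(\mathbb{R})$ together with the standard comparison of sphere and horosphere averages. Everything else — the inverse-limit reduction, the $\varepsilon/3$ bookkeeping, and the $\mathbb{Q}$-point computation — is identical to what is already in the paper.
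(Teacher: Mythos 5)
Your proposal is correct and follows essentially the same route as the paper: the same inverse-limit reduction to finite level, the same appeal to the affine spherical equidistribution of Marklof--Str\"ombergsson (the paper cites \cite[Corollary 5.4]{marklof_strombergsson_free_path_length_2010} for exactly the finite-level statement you isolate as the only non-transcribed ingredient), and the same group-theoretic identity reducing the rational case to \autoref{sladelicequidrot}.
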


\begin{proof}
We again define, for $N \in \mathbb{N}$, $\widetilde{X_N} = (\Gamma(N) \ltimes N \mathbb{Z}^d) \backslash \mathrm{ASL}_d(\mathbb{R})$ and denote by $\widetilde{\mu_N}$ the corresponding Haar measure.

When $\bm \alpha_\infty \in \mathbb{R}^d \setminus \mathbb{Q}^d$, using the same method as in the proof of \autoref{asladelicequid}, the result follows from this statement: 
for every $M_\infty \in \mathrm{SL}_d(\mathbb{R})$, every $\bm \alpha_\infty \in \mathbb{R}^d \setminus \mathbb{Q}^d$, every $N \in \mathbb{N}$ and every $g \in C_b(\widetilde{X_N})$,
\[ \lim_{t \to \infty} g((I_d, \bm \alpha)(M_\infty, \bm 0)(E(\bm x) \Phi^t, \bm 0)) d\lambda(\bm x) = \int_{\widetilde{X_N}} g d\mu_{\widetilde{X_N}}, \]
where $E(\bm x) = \exp \begin{pmatrix} 0 & \bm x \\ -\bm x^t & 0_{d-1} \end{pmatrix}$.
That statement is a special case of \cite[Corollary 5.4]{marklof_strombergsson_free_path_length_2010}.

When $\bm \alpha_\infty \in \mathbb{Q}^d$, it reduces to \autoref{sladelicequidrot} in the same way that \autoref{asladelicequid} reduces to \autoref{sladelicequid} in that case.
\end{proof}

\begin{cor} \label{limsupliminfaslrot} 
For every $M \in \mathrm{SL}_d(\mathbb{A})$, every Borel probability measure $\lambda$ on $S^{d-1}$ which is absolutely continuous with respect to the Lebesgue measure on $\mathbb{R}^{d-1}$, every measurable subset $\mathcal{E} \subset \mathrm{ASL}_d(\mathbb{Q}) \backslash \mathrm{ASL}_d(\mathbb{A})$ and every $\bm \alpha \in \mathbb{A}^d$, we have 
\begin{align*} & \limsup_{t \to \infty} \lambda( \{ \bm v \in S^{d-1} \, : \, (I_d, \bm \alpha) (M, \bm 0) \iota((K(\bm v) \Phi^t, \bm 0)) \in \mathcal{E} \}) \\ &\le  \begin{cases} \mu_{\mathrm{ASL}_d(\mathbb{Q}) \backslash \mathrm{ASL}_d(\mathbb{A})}(\overline{\mathcal{E}}) &\mbox{if } \bm \alpha_\infty \in \mathbb{R}^d \setminus \mathbb{Q}^d \\
\mu_{\mathrm{SL}_d(\mathbb{Q}) \backslash \mathrm{SL}_d(\mathbb{A})}(\{ g \in \mathrm{SL}_d(\mathbb{Q}) \backslash \mathrm{SL}_d(\mathbb{A}) \, : \, g (I_d, (\bm 0, \bm \beta M_f)) \in \overline{\mathcal{E}} \})  &\mbox{if } \bm \alpha_\infty \in \mathbb{Q}^d \end{cases} 
\end{align*}
and 
\begin{align*} &\liminf_{t \to \infty} \lambda( \{ \bm v \in S^{d-1} \, : \, (I_d, \bm \alpha) (M, \bm 0) \iota((K(\bm v) \Phi^t, \bm 0)) \in \mathcal{E} \}) \\
&\ge \begin{cases} \mu_{\mathrm{ASL}_d(\mathbb{Q}) \backslash \mathrm{ASL}_d(\mathbb{A})}(\accentset{\circ}{\mathcal{E}}) &\mbox{if } \bm \alpha_\infty \in \mathbb{R}^d \setminus \mathbb{Q}^d \\
\mu_{\mathrm{SL}_d(\mathbb{Q}) \backslash \mathrm{SL}_d(\mathbb{A})}(\{ g \in \mathrm{SL}_d(\mathbb{Q}) \backslash \mathrm{SL}_d(\mathbb{A}) \, : \, g (I_d, (\bm 0, \bm \beta M_f)) \in \accentset{\circ}{\mathcal{E}} \})  &\mbox{if } \bm \alpha_\infty \in \mathbb{Q}^d
, \end{cases} 
\end{align*}
where $\bm \beta = \bm \alpha_f - \delta(\bm \alpha_\infty)_f$.
\end{cor}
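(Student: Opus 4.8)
The plan is to deduce this from \autoref{asladelicsphequid} together with the portmanteau theorem, in exactly the way \autoref{limsupliminf} was deduced from \autoref{sladelicequid}. For each $t > 0$ write $\Psi_t \colon S^{d-1} \to \mathrm{ASL}_d(\mathbb{Q}) \backslash \mathrm{ASL}_d(\mathbb{A})$ for the map $\bm v \mapsto (I_d, \bm\alpha)(M, \bm 0)\iota((K(\bm v)\Phi^t, \bm 0))$, and let $\mu_t = (\Psi_t)_* \lambda$ be the pushforward of $\lambda$ along $\Psi_t$, a Borel probability measure on $\mathrm{ASL}_d(\mathbb{Q}) \backslash \mathrm{ASL}_d(\mathbb{A})$. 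Then $\int f \, d\mu_t = \int_{S^{d-1}} f(\Psi_t(\bm v)) \, d\lambda(\bm v)$ for every bounded continuous $f$, so \autoref{asladelicsphequid} says precisely that $\mu_t$ converges weakly, as $t \to \infty$, to $\nu_{\bm\alpha}$. Moreover $\nu_{\bm\alpha}$ is itself a Borel probability measure on $\mathrm{ASL}_d(\mathbb{Q}) \backslash \mathrm{ASL}_d(\mathbb{A})$: it is the Haar probability measure $\mu_{\mathrm{ASL}_d(\mathbb{Q}) \backslash \mathrm{ASL}_d(\mathbb{A})}$ when $\bm\alpha_\infty \in \mathbb{R}^d \setminus \mathbb{Q}^d$, and the pushforward of $\mu_{\mathrm{SL}_d(\mathbb{Q}) \backslash \mathrm{SL}_d(\mathbb{A})}$ under the continuous map $g \mapsto g(I_d, (\bm 0, \bm\beta M_f))$ when $\bm\alpha_\infty \in \mathbb{Q}^d$.

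Next I would record that $\mathrm{ASL}_d(\mathbb{Q}) \backslash \mathrm{ASL}_d(\mathbb{A})$ is metrizable: by \autoref{ASLhomeo} it is the inverse limit, over the countable directed set $(\mathbb{N}, \mid)$, of the second-countable locally compact Hausdorff spaces $(\Gamma(N) \ltimes N\mathbb{Z}^d) \backslash \mathrm{ASL}_d(\mathbb{R})$, hence is separable and metrizable, so the portmanteau theorem applies without any fuss. It gives $\limsup_{t \to \infty} \mu_t(\overline{\mathcal{E}}) \le \nu_{\bm\alpha}(\overline{\mathcal{E}})$ (since $\overline{\mathcal{E}}$ is closed) and $\liminf_{t \to \infty} \mu_t(\accentset{\circ}{\mathcal{E}}) \ge \nu_{\bm\alpha}(\accentset{\circ}{\mathcal{E}})$ (since $\accentset{\circ}{\mathcal{E}}$ is open). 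Since $\accentset{\circ}{\mathcal{E}} \subset \mathcal{E} \subset \overline{\mathcal{E}}$ and $\mu_t(\mathcal{E}) = \lambda(\{ \bm v \in S^{d-1} : \Psi_t(\bm v) \in \mathcal{E} \})$, this yields the desired $\limsup$ bound with $\nu_{\bm\alpha}(\overline{\mathcal{E}})$ on the right and the desired $\liminf$ bound with $\nu_{\bm\alpha}(\accentset{\circ}{\mathcal{E}})$ on the right.

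It remains only to unwind $\nu_{\bm\alpha}$ on a closed, respectively open, set in the two cases. For $\bm\alpha_\infty \in \mathbb{R}^d \setminus \mathbb{Q}^d$ one simply has $\nu_{\bm\alpha}(\overline{\mathcal{E}}) = \mu_{\mathrm{ASL}_d(\mathbb{Q}) \backslash \mathrm{ASL}_d(\mathbb{A})}(\overline{\mathcal{E}})$ and $\nu_{\bm\alpha}(\accentset{\circ}{\mathcal{E}}) = \mu_{\mathrm{ASL}_d(\mathbb{Q}) \backslash \mathrm{ASL}_d(\mathbb{A})}(\accentset{\circ}{\mathcal{E}})$. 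For $\bm\alpha_\infty \in \mathbb{Q}^d$, extending the identity $\nu_{\bm\alpha}(f) = \int_{\mathrm{SL}_d(\mathbb{Q}) \backslash \mathrm{SL}_d(\mathbb{A})} f(g(I_d, (\bm 0, \bm\beta M_f))) \, d\mu_{\mathrm{SL}_d(\mathbb{Q}) \backslash \mathrm{SL}_d(\mathbb{A})}(g)$ from continuous $f$ to indicator functions gives $\nu_{\bm\alpha}(\overline{\mathcal{E}}) = \mu_{\mathrm{SL}_d(\mathbb{Q}) \backslash \mathrm{SL}_d(\mathbb{A})}(\{ g : g(I_d, (\bm 0, \bm\beta M_f)) \in \overline{\mathcal{E}} \})$, and likewise with $\accentset{\circ}{\mathcal{E}}$. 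This is exactly the assertion. There is no real obstacle here beyond bookkeeping: the only genuinely substantive inputs are \autoref{asladelicsphequid}, used as a black box, and the passage from convergence of integrals against $\mathcal{C}_b$-functions to the portmanteau inequalities, which is legitimate because \autoref{ASLhomeo} makes the ambient space metrizable.
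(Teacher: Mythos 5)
Your proposal is correct and takes essentially the same route the paper intends: the corollary is deduced from \autoref{asladelicsphequid} via the portmanteau theorem applied to $\overline{\mathcal{E}}$ and $\accentset{\circ}{\mathcal{E}}$, using $\accentset{\circ}{\mathcal{E}} \subset \mathcal{E} \subset \overline{\mathcal{E}}$ and identifying $\nu_{\bm \alpha}$ as the Haar measure (irrational case) or a pushforward of $\mu_{\mathrm{SL}_d(\mathbb{Q}) \backslash \mathrm{SL}_d(\mathbb{A})}$ under right translation (rational case), exactly as \autoref{limsupliminf} is deduced from \autoref{sladelicequid}. Your extra remark on metrizability via \autoref{ASLhomeo} is a harmless refinement the paper leaves implicit.
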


\section{Proof of the main theorem}

In this section, we build up to a proof of \autoref{mainthm}.

Define, for $\mathcal{A} \subset \mathbb{A}^d$ and a positive integer $r \ge 1$, \[ \mathcal{A}_{\ge r}^{\#} = \{ g \in \mathrm{SL}_d(\mathbb{Q}) \backslash \mathrm{SL}_d(\mathbb{A}) \, : \, \#(\mathcal{A} \cap \delta(\mathbb{Q})^d g \setminus \{ \mathbf 0\}) \ge r \}. \]
\begin{lemma}\label{countprop} The following properties hold:
\begin{enumerate}
\item If $\mathcal{A} \subset \mathcal{B} \subset \mathbb{A}^d$, then $\mathcal{A}_{\ge r}^{\#} \subset \mathcal{B}_{\ge r}^{\#}$.
\item If $\mathcal{A} \subset \mathbb{A}^d$ is open, then so is $\mathcal{A}_{\ge r}^{\#}$.
\item If $\mathcal{A} \subset \mathbb{A}^d$ is closed and bounded, then $\mathcal{A}_{\ge r}^{\#}$ is closed.
\item If $\mathcal{A} \subset \mathbb{A}^d$ is such that $m(\mathcal{A}) = 0$, then $\mu(\mathcal{A}_{\ge r}^{\#}) = 0$.
\end{enumerate}
\end{lemma}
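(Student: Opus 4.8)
The plan is to treat the four assertions in turn, noting that the first two are essentially formal while the third and fourth carry the analytic content. Throughout, I write $g$ for a point of $X = \mathrm{SL}_d(\mathbb{Q})\backslash\mathrm{SL}_d(\mathbb{A})$ and, abusing notation, also for a coset representative in $\mathrm{SL}_d(\mathbb{A})$; the set $\delta(\mathbb{Q})^d g$ is a well-defined subset of $\mathbb{A}^d$ independent of the representative, since $\delta(\mathbb{Q})^d$ is left-$\mathrm{SL}_d(\mathbb{Q})$-invariant (as row vectors, $\vecq \gamma$ ranges over $\delta(\mathbb{Q})^d$ as $\vecq$ does, for $\gamma\in\mathrm{SL}_d(\mathbb{Q})$). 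For (1): if $\mathcal{A}\subset\mathcal{B}$ then $\#(\mathcal{A}\cap\delta(\mathbb{Q})^dg\setminus\{\mathbf0\})\le\#(\mathcal{B}\cap\delta(\mathbb{Q})^dg\setminus\{\mathbf0\})$ for every $g$, so the defining inequality $\ge r$ is inherited, giving $\mathcal{A}^{\#}_{\ge r}\subset\mathcal{B}^{\#}_{\ge r}$.

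For (2): suppose $\mathcal{A}$ is open and $g_0\in\mathcal{A}^{\#}_{\ge r}$, so there are distinct nonzero lattice points $\vecq_1 g_0,\ldots,\vecq_r g_0\in\mathcal{A}$ with $\vecq_i\in\delta(\mathbb{Q})^d$. Right-multiplication by $g$ is continuous in $g$, so for each $i$ the set $\{g : \vecq_i g\in\mathcal{A}\}$ is open (preimage of the open set $\mathcal{A}$ under the continuous map $g\mapsto\vecq_i g$); their finite intersection is an open neighbourhood of $g_0$ on which at least $r$ distinct nonzero points of $\delta(\mathbb{Q})^d g$ lie in $\mathcal{A}$ (distinctness of the $\vecq_i g$ is preserved since the $\vecq_i$ are distinct and $g$ is invertible). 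Hence $\mathcal{A}^{\#}_{\ge r}$ is open. Care is needed only to check the statement descends to the quotient $X$, which follows because the condition is left-$\mathrm{SL}_d(\mathbb{Q})$-invariant and $X$ carries the quotient topology.

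For (3): the natural route is to show the complement is open, i.e. that if $g_0$ has at most $r-1$ nonzero points of $\delta(\mathbb{Q})^d g_0$ in the closed bounded set $\mathcal{A}$, the same holds nearby. Here boundedness of $\mathcal{A}$ is essential: it forces $\mathcal{A}\cap\delta(\mathbb{Q})^d g$ to be finite for every $g$ (an adelic lattice meets any bounded set in finitely many points, by discreteness of $\delta(\mathbb{Q})^d$ in $\mathbb{A}^d$ and compactness of $\mathbb{A}/\mathbb{Q}$), and, crucially, gives a \emph{local uniform} bound: choosing a compact neighbourhood $\Omega$ of $g_0$ one shows there is a finite set $S\subset\delta(\mathbb{Q})^d$ such that for all $g\in\Omega$, $\mathcal{A}\cap\delta(\mathbb{Q})^dg\subset\{\vecq g : \vecq\in S\}$ — because $\mathcal{A}\Omega^{-1}$ is bounded and hence meets the lattice $\delta(\mathbb{Q})^d$ (viewed in the appropriate trivialisation) in finitely many points. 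Then for each $\vecq\in S$, $\{g\in\Omega : \vecq g\in\mathcal{A}\}$ is closed (preimage of the closed set $\mathcal{A}$), so the counting function $g\mapsto\#(\mathcal{A}\cap\delta(\mathbb{Q})^dg\setminus\{\mathbf0\})=\sum_{\vecq\in S,\,\vecq\ne\mathbf0}\chi_{\mathcal{A}}(\vecq g)$ is upper semicontinuous on $\Omega$ (a finite sum of upper semicontinuous indicator functions), whence $\{g : \#\ge r\}$ is closed. I expect this local-finiteness/uniformity step to be the main obstacle: one must pass cleanly between the adelic picture and the inverse-limit description of \autoref{SLhomeo}, or argue directly with the product topology on $\mathrm{SL}_d(\mathbb{A})$ and the fact that $\mathcal{A}$, being bounded, is contained in $\widehat{\mathbb{Z}}$-integral vectors at all but finitely many places with bounded archimedean part — so that only finitely many $\vecq\in\mathbb{Q}^d$ can contribute uniformly over a compact $\Omega$.

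For (4): this is a Siegel--Weil (first moment) argument. If $m(\mathcal{A})=0$, apply \autoref{SiegelWeilformula} with $f=\chi_{\mathcal{A}}$ (or approximate $\chi_{\mathcal{A}}$ from above by $L^1$ functions if one wants to stay within continuous test functions, using outer regularity of $m$):
\[ \int_{X}\#(\mathcal{A}\cap\delta(\mathbb{Q})^dg\setminus\{\mathbf0\})\,d\mu(g)=\int_{X}\sum_{\vecq\in\mathbb{Q}^d\setminus\{\mathbf0\}}\chi_{\mathcal{A}}(\delta(\vecq)g)\,d\mu(g)=\int_{\mathbb{A}^d}\chi_{\mathcal{A}}=m(\mathcal{A})=0. \]
Since the integrand is a nonnegative (integer-valued) function, it vanishes $\mu$-almost everywhere; in particular the set where it is $\ge r\ge1$ has $\mu$-measure zero, i.e. $\mu(\mathcal{A}^{\#}_{\ge r})=0$. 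The only subtlety is matching notation: $\delta(\mathbb{Q})^dg$ means $\{\delta(\vecq)g : \vecq\in\mathbb{Q}^d\}$, so the sum over $\delta(\mathbb{Q})^dg\setminus\{\mathbf0\}$ is exactly the sum over $\vecq\in\mathbb{Q}^d\setminus\{\mathbf0\}$ appearing in \autoref{SiegelWeilformula}, and $m$ here denotes the Haar measure on $\mathbb{A}^d$ normalised as in that theorem.
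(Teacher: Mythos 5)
Your proof is correct and follows essentially the same route as the paper's: parts (1), (2) and (4) are argued identically (monotonicity, finite intersections of preimages of the open set, and the Siegel--Weil first-moment bound), and part (3) rests on the same key point, namely that boundedness of $\mathcal{A}$ plus discreteness of $\delta(\mathbb{Q})^d$ give a locally uniform finite set of lattice vectors that can meet $\mathcal{A}$. The only cosmetic difference is that in (3) you conclude via upper semicontinuity of the counting function on a compact neighbourhood, whereas the paper builds an explicit open neighbourhood of $g_0$ in the complement using a contraction-type neighbourhood $U$ of the identity and open sets $V_{\vecr}$ separating the finitely many remaining lattice points from $\mathcal{A}$.
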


Before proceeding with the proof which is, {\em mutatis mutandis}, the proof of \cite[Lemma 6.2]{marklof_strombergsson_free_path_length_2010}, we comment on the choice of a metric on $\mathbb{A}^d$. As we only need to know that there is a nice (translation-invariant, homogeneous) distance on $\mathbb{A}^d$ we could appeal to a general result on the existence of such metrics in locally compact second-countable topological groups due to Struble \cite{struble_metrics_1974}. However, in our special case, it is also possible to give an explicit definition of such a distance, as done by McFeat in \cite[Part I, 3.1]{McFeat1971} --- see also \cite{TorbaZuniga2013, HaynesMunday2013}.
With that out of the way and such a metric $\rho$ on $\mathbb{A}^d$, we set for $\bm x \in \mathbb{A}^d$, $\|\bm x\| = \rho(\bm 0, \bm x)$.

 \begin{proof} 
 
 1. This is immediate.
 
 2. Let $g_0 \in \mathcal{A}_{\ge r}^\#$. Find $\bm v_1, \ldots, \bm v_r \in \delta(\mathbb{Q})^d \setminus \{ \mathbf{0} \}$ such that for every $i, \, \bm v_i g_0 \in \mathcal{A}$.
 Define $f_i \colon \mathrm{SL}_d(\mathbb{A}) \to \mathbb{A}^d$, $g \mapsto \bm v_i g$ and $\Omega = \bigcap_{i = 1}^r f_i^{-1}(\mathcal{A})$. Observe that $g_0 \in \Omega$ and each point of $\Omega$ projects to a point of $\mathcal{A}_{\ge r}^\#$. 
 Since $\Omega$ is open (as the finite intersection of the $f_i^{-1}(\mathcal{A})$ which are all open because $\mathcal{A}$ is open and every $f_i$ is continuous), this implies that $\mathcal{A}_{\ge r}^\#$ is open.  
 
 3. Let $g_0 \in X \setminus \mathcal{A}_{\ge r}^\#$. This means $\#(\mathcal{A} \cap \delta(\mathbb{Q})^d g_0 \setminus \{ \mathbf 0\})  < r$. 
 Let $U$ be a neighbourhood of the identity in $\mathrm{SL}_d(\mathbb{A})$ such that for every $\bm x \in \mathbb{A}^d$ and every $M \in U$, we have \begin{equation} \| \bm xM - \bm x \| \le \frac 12 \| \bm x \|. \end{equation}
 By assumption $\mathcal{A}$ is bounded, so we can define $R = \max_{\bm x \in \mathcal{A}} \| \bm x \|$.
 For any $g \in U$ and any $\bm x \in \mathbb{A}^d$ satisfying $\| \bm x \| > 2R$, it follows that $\bm xg \notin \mathcal{A}$. Indeed: 
 \begin{equation} \|\bm xg\| \ge \| \bm x \| - \|\bm xg - \bm x\| \ge \frac 12 \|\bm x\| > R. \end{equation}
 Define the finite set $F = \{ \vecr \in \delta(\mathbb{Q})^d \setminus \{ \mathbf{0} \} \, : \, \| \vecr  g_0 \| \le 2R \text{ and } \vecr g \notin \mathcal{A} \}$.
 For each $\vecr \in F$, choose an open set $V_\vecr \subset \mathbb{A}^d$ such that $\vecr g_0 \in V_\vecr \subset \mathbb{A}^d \setminus \mathcal{A}$.
 Set $U' = Ug_0 \cap \left( \bigcap_{\vecr \in F} \{ g \in X \, : \, \vecr g \in V_\vecr \} \right)$. 
 $U'$ is an open subset of $X$ and $g_0 \in U'$. Moreover, by construction each $g \in U'$ projects to a point of $X \setminus \mathcal{A}_{\ge r}^\#$, which concludes the proof that $\mathcal{A}_{\ge r}^\#$ is closed.
 
 4. We have, for $r \ge 1$, 
 \begin{align} \mu(\mathcal{A}_{\ge r}^\#) &\le \mu(\mathcal{A}_{\ge 1}^\#) \\
 &= \mu(\{ g \in X \, : \, \mathcal{A} \cap \delta(\mathbb{Q})^d g \ne \{ \bm 0 \} \}) \\
 &\le \sum_{\bm q \in \delta(\mathbb{Q})^d \setminus \{\mathbf{0}\}} \mu(\{ g \in X \, : \,  \bm q g \in \mathcal{A} \}) \\
 &= \sum_{\bm q \in \delta(\mathbb{Q})^d \setminus \{\mathbf{0}\}} \int_X \chi_\mathcal{A}(\bm qg) d\mu(g) \\
 &= \int_X \sum_{\bm q \in \delta(\mathbb{Q})^d \setminus \{\mathbf{0}\}} \chi_\mathcal{A}(\bm qg) d\mu(g) \\
 &= m(\mathcal{A}) \\
 &= 0, \end{align}
 where the penultimate equality follows from \autoref{Siegelformula}.
 \end{proof}
 
 \begin{thm}\label{slrandhoro}
Let $\mathcal{A} \subset \mathbb{A}^d$ be a bounded Borel set with $m(\partial \mathcal{A}) = 0$. 
For every $M \in \mathrm{SL}_d(\mathbb{A})$, every $r \in \mathbb{Z}_{\ge 0}$ and every Borel probability measure $\lambda \ll \operatorname{Leb}_{d-1}$, \begin{equation} \lambda(\{ \bm x \in \mathbb{R}^{d-1} \, : \, \#(\mathcal{A} \iota(\Phi^{-t} n(\bm x)) \cap \delta(\mathbb{Q})^d M \setminus \{ \mathbf{0} \}) \ge r \}) \end{equation} has a limit when $t$ tends to infinity and it is given by 
\begin{equation} \mu( \{ g \in \mathrm{SL}_d(\mathbb{Q}) \backslash \mathrm{SL}_d(\mathbb{A}) \, : \, \# (\mathcal{A} \cap \delta(\mathbb{Q})^d g \setminus \{ \mathbf{0} \}) \ge r \} ). \end{equation}
\end{thm}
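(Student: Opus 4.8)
The plan is to reinterpret the left-hand side as the $\lambda$-measure of a horospherical preimage of the set $\mathcal{A}_{\ge r}^{\#}$ and then to invoke the equidistribution corollary \autoref{limsupliminf}, the only genuine subtlety being that $\mathcal{A}_{\ge r}^{\#}$ is in general neither open nor closed.

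First I would dispose of $r=0$: then $\mathcal{A}_{\ge 0}^{\#} = X := \mathrm{SL}_d(\mathbb{Q}) \backslash \mathrm{SL}_d(\mathbb{A})$ and both sides equal $1$, so from now on $r \ge 1$. Next, unwinding the definitions: a vector $\bm q M$ with $\bm q \in \delta(\mathbb{Q})^d$ lies in $\mathcal{A}\iota(\Phi^{-t} n(\bm x))$ exactly when $\bm q \cdot \bigl(M\iota(n(-\bm x)\Phi^t)\bigr) \in \mathcal{A}$ (using that $\iota$ is a homomorphism and $n(\bm x)^{-1}=n(-\bm x)$), and it is nonzero iff $\bm q \ne \bm 0$; hence, writing $g = M\iota(n(-\bm x)\Phi^t)$,
\[ \#\bigl(\mathcal{A}\iota(\Phi^{-t} n(\bm x)) \cap \delta(\mathbb{Q})^d M \setminus \{\bm 0\}\bigr) = \#\bigl(\mathcal{A} \cap \delta(\mathbb{Q})^d g \setminus \{\bm 0\}\bigr), \]
and the right-hand side depends only on the class of $g$ in $X$, since left multiplication by $\mathrm{SL}_d(\mathbb{Q})$ fixes $\delta(\mathbb{Q})^d$. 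Expressing this count as $\sum_{\bm q \in \delta(\mathbb{Q})^d \setminus \{\bm 0\}} \chi_{\mathcal{A}}(\bm q g)$ simultaneously shows it is finite (because $\mathcal{A}$ is bounded and $\delta(\mathbb{Q})^d g$ is a lattice in $\mathbb{A}^d$) and that $\mathcal{A}_{\ge r}^{\#} \subseteq X$, and the set $\{\bm x \in \mathbb{R}^{d-1} : \cdots \ge r\}$, are Borel. After the change of variable $\bm x \mapsto -\bm x$, which preserves absolute continuity with respect to $\operatorname{Leb}_{d-1}$, the quantity becomes $\widetilde\lambda\bigl(\{\bm x \in \mathbb{R}^{d-1} : M\iota(n(\bm x)\Phi^t) \in \mathcal{A}_{\ge r}^{\#}\}\bigr)$ for a suitable Borel probability measure $\widetilde\lambda \ll \operatorname{Leb}_{d-1}$.

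Applying \autoref{limsupliminf} with $\mathcal{E} = \mathcal{A}_{\ge r}^{\#}$ I would then get
\[ \mu\bigl(\accentset{\circ}{\mathcal{A}_{\ge r}^{\#}}\bigr) \le \liminf_{t \to \infty}\widetilde\lambda(\cdots) \le \limsup_{t \to \infty}\widetilde\lambda(\cdots) \le \mu\bigl(\overline{\mathcal{A}_{\ge r}^{\#}}\bigr), \]
so the theorem follows once I show $\mu\bigl(\partial\mathcal{A}_{\ge r}^{\#}\bigr) = 0$ (note $\mu(\accentset{\circ}{\mathcal{A}_{\ge r}^{\#}}) \le \mu(\mathcal{A}_{\ge r}^{\#}) \le \mu(\overline{\mathcal{A}_{\ge r}^{\#}})$ is automatic). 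For this, \autoref{countprop}(1) gives the nesting $(\mathcal{A}^{\circ})_{\ge r}^{\#} \subseteq \mathcal{A}_{\ge r}^{\#} \subseteq (\overline{\mathcal{A}})_{\ge r}^{\#}$, where by \autoref{countprop}(2) the left-hand set is open and, since $\overline{\mathcal{A}}$ is closed and bounded, by \autoref{countprop}(3) the right-hand set is closed; hence $(\mathcal{A}^{\circ})_{\ge r}^{\#} \subseteq \accentset{\circ}{\mathcal{A}_{\ge r}^{\#}}$ and $\overline{\mathcal{A}_{\ge r}^{\#}} \subseteq (\overline{\mathcal{A}})_{\ge r}^{\#}$, whence
\[ \partial\mathcal{A}_{\ge r}^{\#} \;\subseteq\; (\overline{\mathcal{A}})_{\ge r}^{\#} \setminus (\mathcal{A}^{\circ})_{\ge r}^{\#}. \]

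Finally I would observe that if $g$ lies in the set on the right then $\delta(\mathbb{Q})^d g$ has strictly more nonzero points in $\overline{\mathcal{A}}$ than in $\mathcal{A}^{\circ}$, hence at least one nonzero point in $\overline{\mathcal{A}} \setminus \mathcal{A}^{\circ} = \partial\mathcal{A}$, i.e. $(\overline{\mathcal{A}})_{\ge r}^{\#} \setminus (\mathcal{A}^{\circ})_{\ge r}^{\#} \subseteq (\partial\mathcal{A})_{\ge 1}^{\#}$; since $m(\partial\mathcal{A}) = 0$, \autoref{countprop}(4) gives $\mu\bigl((\partial\mathcal{A})_{\ge 1}^{\#}\bigr) = 0$, so $\mu\bigl(\partial\mathcal{A}_{\ge r}^{\#}\bigr) = 0$ and all three of $\mu(\accentset{\circ}{\mathcal{A}_{\ge r}^{\#}})$, $\mu(\mathcal{A}_{\ge r}^{\#})$, $\mu(\overline{\mathcal{A}_{\ge r}^{\#}})$ coincide; the squeeze above then closes the argument. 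The substantive content is entirely packaged into \autoref{countprop}, whose part (4) rests on Siegel's formula \autoref{Siegelformula} and whose part (3) (closedness) is the delicate point there, using the boundedness of $\mathcal{A}$ to control how far lattice points move under small perturbations; within the present proof the only thing to watch is the boundary bookkeeping just described. I would also remark that the hypothesis $m(\partial\mathcal{A}) = 0$ forces $m(\mathcal{A}^{\circ}) = m(\mathcal{A})$, so the lower bound is vacuous only in the degenerate case where $\mathcal{A}$ is itself $m$-null, in which case every term is $0$.
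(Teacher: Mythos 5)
Your proposal is correct and follows essentially the same route as the paper: sandwich the quantity via \autoref{limsupliminf}, reduce the boundary discrepancy to the containment $(\overline{\mathcal{A}})_{\ge r}^{\#} \setminus (\accentset{\circ}{\mathcal{A}})_{\ge r}^{\#} \subseteq (\partial\mathcal{A})_{\ge 1}^{\#}$, and kill it with \autoref{countprop}(4) and Siegel's formula. The only (immaterial) difference is that you apply the corollary to $\mathcal{E} = \mathcal{A}_{\ge r}^{\#}$ and then trap its topological closure and interior between $(\accentset{\circ}{\mathcal{A}})_{\ge r}^{\#}$ and $(\overline{\mathcal{A}})_{\ge r}^{\#}$, whereas the paper first uses monotonicity and applies the corollary to those already open/closed sets directly.
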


 \begin{proof} Consider $\overline{\mathcal{A}}_{\ge r}^\#$, which is closed by property 3 of \autoref{countprop}, and observe: 
 \begin{align} \label{upper}
 &\limsup_{t \to +\infty} \lambda(\{ \bm x \in \mathbb{R}^{d-1} \, : \, \# (\mathcal{A} \iota( \Phi^{-t} n(\bm x) ) \cap \delta(\mathbb{Q})^d M \setminus \{ \mathbf{0} \}) \ge r \}) \\
 &\le \limsup_{t \to +\infty} \lambda(\{ \bm x \in \mathbb{R}^{d-1} \, : \, \# (\overline{\mathcal{A}} \cap \delta(\mathbb{Q})^d M  \iota(n(-\bm x) \Phi^t) \setminus \{ \mathbf{0} \}) \ge r \}) \\
 &\le \mu(\overline{\mathcal{A}}_{\ge r}^\#). \end{align}
 The first inequality follows from property 1 of \autoref{countprop} and the second one from \autoref{limsupliminf}.

 Similarly, $\accentset{\circ}{\mathcal{A}}_{\ge r}^\#$ is open by property 2 of \autoref{countprop} and we get 
 \begin{align} \label{lower}
 &\liminf_{t \to +\infty} \lambda(\{ \bm x \in \mathbb{R}^{d-1} \, : \, \# (\mathcal{A} \iota( \Phi^{-t} n(\bm x)) \cap \delta(\mathbb{Q})^d M \setminus \{ \mathbf{0} \}) \ge r \}) \\
 &\ge \mu(\accentset{\circ}{\mathcal{A}}_{\ge r}^\#). \end{align}
 
Finally, note that \[ \overline{\mathcal{A}}_{\ge r}^\# \setminus \accentset{\circ}{\mathcal{A}}_{\ge r}^\# \subset (\partial A)_{\ge 1}^\# \] so that property 4 of \autoref{countprop} implies $\mu((\partial A)_{\ge 1}^\#) = 0$. 
Hence $\mu(\overline{\mathcal{A}}_{\ge r}^\#) = \mu(\accentset{\circ}{\mathcal{A}}_{\ge r}^\#)$, which yields the desired conclusion in view of \eqref{upper} and \eqref{lower}. \end{proof}

We also define, for $\mathcal{A} \subset \mathbb{A}^d$ and an integer $r \ge 1$, 
\[ [\mathcal{A}]_{\ge r} = \{ g \in \mathrm{ASL}_d(\mathbb{Q}) \backslash \mathrm{ASL}_d(\mathbb{A}) \, : \, \#(\mathcal{A} \cap \delta(\mathbb{Q})^d g) \ge r \} \] and prove the analogue of \autoref{countprop} for these sets, which in turn yields an analogue of \autoref{slrandhoro} for the special affine group.

\begin{lemma} \label{countpropasl} The following properties hold:
\begin{enumerate}
\item If $\mathcal{A} \subset \mathcal{B} \subset \mathbb{A}^d$, then $[\mathcal{A}]_{\ge r} \subset [\mathcal{B}]_{\ge r}$.
\item If $\mathcal{A} \subset \mathbb{A}^d$ is open, then so is $[\mathcal{A}]_{\ge r}$.
\item If $\mathcal{A} \subset \mathbb{A}^d$ is closed and bounded, then $[\mathcal{A}]_{\ge r}$ is closed.
\item If $\mathcal{A} \subset \mathbb{A}^d$ is such that $m(\mathcal{A}) = 0$, then $\mu_{\mathrm{ASL}_d(\mathbb{Q}) \backslash \mathrm{ASL}_d(\mathbb{A})}([\mathcal{A}]_{\ge r}) = 0$.
\end{enumerate}
\end{lemma}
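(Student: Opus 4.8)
The plan is to follow the proof of \autoref{countprop} line by line, the affine case differing from the linear one only cosmetically. The one preliminary point to settle is that the quantity $\#(\mathcal{A}\cap\delta(\mathbb{Q})^d g)$ is genuinely a function on the quotient $\mathrm{ASL}_d(\mathbb{Q})\backslash\mathrm{ASL}_d(\mathbb{A})$: if $\gamma=(\Gamma,\bm\nu)\in\mathrm{ASL}_d(\mathbb{Q})$ then $\delta(\mathbb{Q})^d(\gamma g)=\{(\bm q\Gamma+\bm\nu)g:\bm q\in\delta(\mathbb{Q})^d\}=\delta(\mathbb{Q})^d g$, since $\bm q\mapsto\bm q\Gamma+\bm\nu$ permutes $\delta(\mathbb{Q})^d$; and for each fixed $g=(M,\bm\xi)$ the map $\bm x\mapsto\bm xM+\bm\xi$ is a homeomorphism of $\mathbb{A}^d$, so $\delta(\mathbb{Q})^d g$ is discrete and meets every bounded set in a finite set. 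Throughout, the right action of $\mathrm{ASL}_d(\mathbb{A})$ on $\mathbb{A}^d$ plays the role that the linear action of $\mathrm{SL}_d(\mathbb{A})$ played in \autoref{countprop}.

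Items 1--3 then go through essentially verbatim. Item 1 is immediate. For item 2, given $g_0\in[\mathcal{A}]_{\ge r}$ choose $r$ distinct $\bm v_1,\dots,\bm v_r\in\delta(\mathbb{Q})^d$ with $\bm v_ig_0\in\mathcal{A}$; the maps $g\mapsto\bm v_ig$ are continuous from $\mathrm{ASL}_d(\mathbb{A})$ to $\mathbb{A}^d$, the points $\bm v_ig$ remain distinct for every $g$, and the finite intersection of the preimages of the open set $\mathcal{A}$ is an open neighbourhood of $g_0$ all of whose points project into $[\mathcal{A}]_{\ge r}$. For item 3, given $g_0\notin[\mathcal{A}]_{\ge r}$ set $R=\max_{\bm x\in\mathcal{A}}\|\bm x\|$ and run the usual upper-semicontinuity-of-counting argument: choose a small symmetric neighbourhood $U$ of the identity so that the affine action by elements of $U$ perturbs points of $\mathbb{A}^d$ by a controlled amount — because of the translation part of $\mathrm{ASL}_d$ this takes the shape $\|\bm yh-\bm y\|\le\tfrac12\|\bm y\|+1$ rather than a pure contraction, which changes nothing beyond replacing $2R$ by $2(R+1)$ — so that no $\bm rg$ with $\|\bm rg_0\|$ large can lie in $\mathcal{A}$; then separate from $\mathcal{A}$ by open sets $V_{\bm r}$ the finitely many remaining rational points with $\bm rg_0\notin\mathcal{A}$, and intersect $Ug_0$ with the open conditions $\bm rg\in V_{\bm r}$ to obtain an open neighbourhood of $g_0$ inside the complement of $[\mathcal{A}]_{\ge r}$.

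Item 4 is the only place where something not already recorded in the excerpt is needed, namely the mean value theorem for the affine group: for $f\in L^1(\mathbb{A}^d)$, $\int_{\mathrm{ASL}_d(\mathbb{Q})\backslash\mathrm{ASL}_d(\mathbb{A})}\sum_{\bm q\in\delta(\mathbb{Q})^d}f(\bm qg)\,d\mu(g)=\int_{\mathbb{A}^d}f$. This follows painlessly from the fibration $\mathrm{ASL}_d(\mathbb{Q})\backslash\mathrm{ASL}_d(\mathbb{A})\to\mathrm{SL}_d(\mathbb{Q})\backslash\mathrm{SL}_d(\mathbb{A})$ underlying \autoref{ASLhomeo}, whose fibre over $[M]$ is the compact group $\delta(\mathbb{Q})^d M\backslash\mathbb{A}^d$ of total mass $1$: disintegrating $\mu$ over this map and unfolding the inner sum over the lattice $\delta(\mathbb{Q})^d M$ on each fibre gives $\int_{\mathbb{A}^d}f$ independently of $[M]$, hence the same value after integrating over the base (equivalently one may quote \autoref{SiegelWeilformula} and restore the $\bm q=\bm 0$ term). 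Granting this, the bound in item 4 of \autoref{countprop} copies over unchanged: $\mu_{\mathrm{ASL}_d(\mathbb{Q})\backslash\mathrm{ASL}_d(\mathbb{A})}([\mathcal{A}]_{\ge r})\le\mu([\mathcal{A}]_{\ge1})\le\sum_{\bm q\in\delta(\mathbb{Q})^d}\int\chi_\mathcal{A}(\bm qg)\,d\mu(g)=\int\sum_{\bm q}\chi_\mathcal{A}(\bm qg)\,d\mu(g)=m(\mathcal{A})=0$. The only substantive things to get right are thus this affine mean value theorem and the descent of the counting sum to the quotient that makes it applicable; everything else is a routine transcription of \autoref{countprop}.
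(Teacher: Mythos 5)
Your proposal is correct and follows essentially the same route as the paper: items 1--3 are a direct transcription of \autoref{countprop} with the translation part absorbed into a slightly enlarged radius (your $2(R+1)$ versus the paper's $4R$ is a cosmetic difference), and item 4 rests on the same affine mean value identity, obtained by disintegrating $\mu_{\mathrm{ASL}_d(\mathbb{Q})\backslash\mathrm{ASL}_d(\mathbb{A})}$ over the $\mathbb{Q}^d\backslash\mathbb{A}^d$-fibre and unfolding, exactly as in the paper's displayed bound. The preliminary check that the counting function descends to the quotient is a welcome (if routine) addition the paper leaves implicit.
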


\begin{proof}
(i) is once again immediate and (ii) can be readily proved by adapting the proof of (ii) in \autoref{countprop}.

For (iii), the only change to be made is that $2R$ now needs to be $4R$ in order to be able to write, for $g = (M, \bm \xi) \in \mathrm{ASL}_d(\mathbb{A})$ in the appropriate neighbourhood of the identity and $\bm x \in \mathbb{A}^d$ with $\| x \| > 4R$, 
\[ \| \bm x g \| = \| \bm x M + \bm \xi \| \ge \| \bm x \| - \| \bm x M - M \| - \| \bm \xi \| > \frac 12 \| \bm x \| - R > R. \]

As for (iv), this time we have 
\begin{align*}
&\mu_{\mathrm{ASL}_d(\mathbb{Q}) \backslash \mathrm{ASL}_d(\mathbb{A})}(\{ g \in \mathrm{ASL}_d(\mathbb{A}) \, : \, \mathcal{A} \cap \delta(\mathbb{Q})^d g \ne \emptyset \}) \\
&\le \sum_{\bm q \in \delta(\mathbb{Q})^d} \int_{\mathrm{SL}_d(\mathbb{A})} \int_{\mathbb{A}^d} \chi_{\mathcal{A}}(\bm q M + \bm \xi) d \bm \xi d\mu_{\mathrm{SL}_d(\mathbb{Q}) \backslash \mathrm{SL}_d(\mathbb{A})}(M)
\end{align*}
and the statement once again follows from \autoref{Siegelformula}.
\end{proof}

\begin{thm}\label{aslrandhoro}
Let $\mathcal{A} \subset \mathbb{A}^d$ be a bounded Borel set with $m(\partial \mathcal{A}) = 0$. 
For every $M \in \mathrm{SL}_d(\mathbb{A})$, every $\bm \alpha \in \mathbb{R}^d$, every $r \in \mathbb{Z}_{\ge 0}$ and every Borel probability measure $\lambda \ll \operatorname{Leb}_{d-1}$, \begin{equation} \lambda(\{ \bm x \in \mathbb{R}^{d-1} \, : \, \#(\mathcal{A} \iota((\Phi^{-t} n(\bm x), \bm 0)) (I_d, \bm \alpha) \cap \delta(\mathbb{Q})^d M) \ge r \}) \end{equation} has a limit when $t$ tends to infinity and it is given by 
\begin{equation} \mu_{\mathrm{ASL}_d(\mathbb{Q}) \backslash \mathrm{ASL}_d(\mathbb{A})}( \{ g \in \mathrm{ASL}_d(\mathbb{Q}) \backslash \mathrm{ASL}_d(\mathbb{A}) \, : \, \# (\mathcal{A} \cap \delta(\mathbb{Q})^d g) \ge r \} )\end{equation}
if $\bm \alpha \in \mathbb{R}^d \setminus \mathbb{Q}^d$
and by
\begin{equation} \mu_{\mathrm{SL}_d(\mathbb{Q}) \backslash \mathrm{SL}_d(\mathbb{A})}( \{ g \in \mathrm{SL}_d(\mathbb{Q}) \backslash \mathrm{SL}_d(\mathbb{A}) \, : \, \# (\mathcal{A} - (\bm 0, \bm \beta M_f) \cap \delta(\mathbb{Q})^d g \setminus \{ \bm 0 \} ) \ge r \} )\end{equation}
if $\bm \alpha \in \mathbb{Q}^d$,
where $\bm \beta = - \delta(\bm \alpha)_f$.
\end{thm}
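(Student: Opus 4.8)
The plan is to repeat the proof of \autoref{slrandhoro} almost verbatim, with \autoref{countpropasl} in the role of \autoref{countprop} and \autoref{limsupliminfasl} in the role of \autoref{limsupliminf}, and in the rational case to reduce the statement back to \autoref{slrandhoro} itself. I would start by rewriting the counting condition: for any $g \in \mathrm{ASL}_d(\mathbb{A})$ the set $\mathcal{A}\,g \cap \delta(\mathbb{Q})^d M$ has the same cardinality as $\mathcal{A} \cap \delta(\mathbb{Q})^d M g^{-1} = \mathcal{A} \cap \delta(\mathbb{Q})^d h$ with $h = (M,\mathbf{0})g^{-1} \in \mathrm{ASL}_d(\mathbb{A})$; since $\delta(\mathbb{Q})^d$ is left-invariant under $\mathrm{ASL}_d(\mathbb{Q})$, the condition $\#(\mathcal{A}\,g \cap \delta(\mathbb{Q})^d M) \ge r$ says precisely that the class of $h$ lies in $[\mathcal{A}]_{\ge r} \subset \mathrm{ASL}_d(\mathbb{Q})\backslash\mathrm{ASL}_d(\mathbb{A})$. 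Taking $g = \iota((\Phi^{-t}n(\bm x),\mathbf{0}))(I_d,\bm\alpha)$ and performing the harmless substitution $\bm x \mapsto -\bm x$ (which preserves absolute continuity of $\lambda$), the class of $h$ is exactly of the form to which \autoref{limsupliminfasl} applies.

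When $\bm\alpha \in \mathbb{R}^d \setminus \mathbb{Q}^d$, the argument is then identical to that of \autoref{slrandhoro}. By part~1 of \autoref{countpropasl} one has $[\accentset{\circ}{\mathcal{A}}]_{\ge r} \subset [\mathcal{A}]_{\ge r} \subset [\overline{\mathcal{A}}]_{\ge r}$, the first set being open (part~2) and the last being closed (part~3, as $\overline{\mathcal{A}}$ is bounded). Applying the first branch of \autoref{limsupliminfasl} with $\mathcal{E} = [\overline{\mathcal{A}}]_{\ge r}$ and with $\mathcal{E} = [\accentset{\circ}{\mathcal{A}}]_{\ge r}$, together with monotonicity of the count, bounds the $\limsup$ of the quantity in the statement of the theorem by $\mu_{\mathrm{ASL}_d(\mathbb{Q})\backslash\mathrm{ASL}_d(\mathbb{A})}([\overline{\mathcal{A}}]_{\ge r})$ and its $\liminf$ below by $\mu_{\mathrm{ASL}_d(\mathbb{Q})\backslash\mathrm{ASL}_d(\mathbb{A})}([\accentset{\circ}{\mathcal{A}}]_{\ge r})$. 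Since $[\overline{\mathcal{A}}]_{\ge r} \setminus [\accentset{\circ}{\mathcal{A}}]_{\ge r} \subset [\partial\mathcal{A}]_{\ge 1}$, part~4 of \autoref{countpropasl} and the hypothesis $m(\partial\mathcal{A}) = 0$ force these two measures to coincide; as they both equal $\mu_{\mathrm{ASL}_d(\mathbb{Q})\backslash\mathrm{ASL}_d(\mathbb{A})}([\mathcal{A}]_{\ge r})$ (which in particular is measurable), this is the claimed limit.

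When $\bm\alpha \in \mathbb{Q}^d$ the second branch of \autoref{limsupliminfasl} is the relevant one, and the computation carried out in the proof of \autoref{asladelicequid} in this case shows that, modulo $\mathrm{ASL}_d(\mathbb{Q})$, the class of $h$ is a flow point in $\mathrm{SL}_d(\mathbb{Q})\backslash\mathrm{SL}_d(\mathbb{A})$ right-translated by $(I_d,(\mathbf{0},\bm\beta M_f))$ with $\bm\beta = -\delta(\bm\alpha)_f$; equivalently, the counting condition turns into $\#\big((\mathcal{A}-(\mathbf{0},\bm\beta M_f)) \cap \delta(\mathbb{Q})^d g \setminus \{\mathbf{0}\}\big) \ge r$, the removal of $\mathbf{0}$ being forced because this degenerate (translated linear) adelic lattice always contains the marked point. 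Since translation by $(\mathbf{0},\bm\beta M_f)$ is a measure-preserving homeomorphism of $\mathbb{A}^d$, the set $\mathcal{A}-(\mathbf{0},\bm\beta M_f)$ is again a bounded Borel set with boundary of measure zero, so \autoref{slrandhoro} applies to it verbatim and produces the second formula. I expect this last case to be the main obstacle: one must keep careful track of the inversions and of the side on which each group element acts, both to identify the drift vector $(\mathbf{0},\bm\beta M_f)$ correctly and to justify excising the origin, whereas the generic case and the boundary-measure-zero bookkeeping are routine once \autoref{countpropasl} is available.
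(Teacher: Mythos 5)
Your proposal is correct and takes exactly the paper's route: the paper's entire proof of this theorem is the single remark that it ``follows the same lines as the proof of \autoref{slrandhoro}, using \autoref{countpropasl} instead of \autoref{countprop} and \autoref{limsupliminfasl} instead of \autoref{limsupliminf}'', which is precisely the substitution you carry out. Your extra bookkeeping for the rational case --- moving the flow to the other side of the lattice, invoking the computation from \autoref{asladelicequid} to identify the drift $(\bm 0, \bm\beta M_f)$, and excising the origin --- only makes explicit what the paper leaves implicit.
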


\begin{proof}
The proof follows the same lines as the proof of \autoref{slrandhoro}, using \autoref{countpropasl} instead of \autoref{countprop} and \autoref{limsupliminfasl} instead of \autoref{limsupliminf}.
\end{proof}

We can now see that \autoref{mainthm} is proved just like \autoref{aslrandhoro} thanks this time to \autoref{limsupliminfaslrot} instead of \autoref{limsupliminfasl}.

\section{Applications} \label{apps}

For $d \ge 2$, a lattice $\mathcal{L}$ in $\mathbb{A}^d$ and a bounded ``window set'' $\mathcal{W} \subset \mathbb{A}_f^d$ 
, define the (adelic) cut-and-project set
\[ \mathcal{P}(\mathcal{L}, \mathcal{W}) = \{ \pi(\bm x) \, : \, \bm x \in \mathcal L, \, \pi_{\text{int}}(\bm x) \in \mathcal W \} \subset \mathbb{R}^d, \]
where $\pi \colon \mathbb{A}^d \to \mathbb{R}^d$ and $\pi_{\text{int}} \colon \mathbb{A}^d \to \mathbb{A}_f^d$ are the canonical projections.

In this section, a crucial assumption is $m_f(\partial \mathcal{W})=0$.

A key feature is that we are able to extend the results presented below to point sets such as the set of primitive lattice points in $\mathbb{R}^d$ as discussed in the introduction. It does fall within our framework and can be viewed as $\mathcal{P}(\mathbb{Q}^d, \mathcal{W})$, with however $m_f(\partial \mathcal{W}) > 0$. We discuss this and other examples in \autoref{examples}. That our results carry through for such ``irregular'' point sets follows from a probabilistic result we prove in \autoref{densityincr}, and which may be of independent interest.
We discuss this extension in \autoref{extension}.

\subsection{Free path length}\label{appfpl}
Recall that for a set of scatterers with common radius $\rho$, the free path length $\tau(\bm q, \bm v, \rho)$ is defined (see \eqref{fpldef}) as the first time a particle with initial position $\bm q \in \mathbb{R}^d$ and initial direction $\bm v \in S^{d-1}$ hits one of the scatterers.
Define, for $\xi > 0$, the cylinder \[ \mathscr{C}_{\xi} = \{ (x_1, x_2 \ldots, x_d) \in \mathbb{R}^d \, : \, 0 < x_1 < \xi, \, x_2^2 + \ldots + x_d^2 < 1  \}. \]

\begin{cor}\label{fpl} Fix a lattice $\mathcal{L} = \mathbb{Q}^d A$ in $\mathbb{A}^d$, $A \in \mathrm{SL}_d(\mathbb{A})$.  Let $\bm q$ in $\mathbb{R}^d$ and $\bm \alpha = -\bm qA_\infty^{-1}$. For every bounded $\mathcal{W} \subset \mathbb{A}_f^d$ with $m_f(\partial \mathcal{W})=0$, there exists a function $F_{\mathcal{P}, \bm \alpha} \colon \mathbb{R}_{\ge 0} \to [0,1]$ such that for every Borel probability measure $\lambda$ on $S^{d-1}$ which is absolutely continuous with respect to the Lebesgue measure and every $\xi > 0$, 
\begin{equation} \lim_{\rho \to 0} \lambda(\{ \bm v \in S^{d-1} \, : \, \rho^{d-1} \tau(\bm q, \bm v, \rho) \ge \xi \}) = F_{\mathcal{P}, \bm \alpha}(\xi).\end{equation} 
Moreover, for $\bm \alpha \in \mathbb{R}^d \setminus \mathbb{Q}^d$, the limiting distribution $F_{\mathcal{P},\bm \alpha}$ is independent of $\bm \alpha$ (and thus $\bm q$) and given by 
\begin{align} \label{fpllimdistirr}
\begin{split}
 F_\mathcal{P}(\xi) &= \mu(\{M \in \mathrm{ASL}_d(\mathbb{Q}) \backslash \mathrm{ASL}_d(\mathbb{A}) \, : \,  \mathcal{P}(\delta(\mathbb{Q})^d M, \mathcal{W}) \cap \mathscr{C}_\xi = \emptyset \}) \\
&= \mu(\{M \in \mathrm{ASL}_d(\mathbb{Q}) \backslash \mathrm{ASL}_d(\mathbb{A}) \, : \,  \delta(\mathbb{Q})^d M \cap (\mathscr{C}_\xi \times \mathcal{W}) = \emptyset \}).  
\end{split}
\end{align}
For $\bm \alpha \in \mathbb{Q}^d$, the limiting distribution $F_{\mathcal{P}, \bm \alpha}$ is given by 
\begin{align} \label{fpllimdistrat}
\begin{split} F_\mathcal{P, \bm \alpha}(\xi) &= \mu(\{M \in \mathrm{SL}_d(\mathbb{Q}) \backslash \mathrm{SL}_d(\mathbb{A}) \, : \,  \mathcal{P}(\delta(\mathbb{Q})^d M, \mathcal{W}_{\bm \beta}) \cap \mathscr{C}_\xi = \emptyset \}) \\
&= \mu(\{M \in \mathrm{SL}_d(\mathbb{Q}) \backslash \mathrm{SL}_d(\mathbb{A}) \, : \,  \delta(\mathbb{Q})^d M \cap (\mathscr{C}_\xi \times \mathcal{W}_{\bm \beta}) = \emptyset \}),  
\end{split}
\end{align}
where $\bm \beta \in \delta(\mathbb{Q})^d$ is such that $\bm \alpha = \pi(\bm \beta)$ and $\mathcal{W}_{\bm \beta} = \mathcal{W} - \pi_{\text{int}}(\bm \beta)A_f$.
\end{cor}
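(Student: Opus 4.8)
The plan is to deduce this from \autoref{mainthm} by recasting the free path length as a lattice-point count on a renormalised adelic affine lattice and then translating the limiting measure it produces through the cut-and-project dictionary. Set $\rho = e^{-t}$, so that $\rho\to 0$ corresponds to $t\to\infty$, and write $L = \xi\rho^{1-d}$ for the trajectory length to be controlled.

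First I would carry out the geometric reduction. The condition $\rho^{d-1}\tau(\bm q,\bm v,\rho)\ge\xi$ (see \eqref{fpldef}) says that the segment $\{\bm q+s\bm v:0<s<L\}$ meets no ball of radius $\rho$ about a point of $\mathcal{P}(\mathcal{L},\mathcal{W})$, with the ball at $\bm q$ deleted precisely when $\bm q\in\mathcal{P}$, which is the case exactly when $\bm\alpha = -\bm q A_\infty^{-1}\in\mathbb{Q}^d$. Applying the rotation $K(\bm v)$, translating $\bm q$ to the origin, and applying the renormalising flow $\Phi^t$ to the real component of $\mathbb{A}^d$ --- the same renormalisation as in \cite{marklof_strombergsson_free_path_length_2010} --- this is equivalent, up to a controlled error, to the emptiness of $\mathcal{A}\,\iota\big((\Phi^{-t},\bm 0)(K(\bm v),\bm 0)(I_d,\bm\alpha)\big)\cap\delta(\mathbb{Q})^d A$ with $\mathcal{A} = \mathscr{C}_\xi\times\mathcal{W}\subset\mathbb{A}^d$ (and, in the rational case, with the origin removed from the intersection as in \autoref{mainthm}) --- exactly the counting problem of \autoref{mainthm}, with $r=1$ and $M=A$. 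The ``controlled error'' is that the $\Phi^t$-image of the $\rho$-neighbourhood of the segment is not literally $\mathscr{C}_\xi$ but a region sandwiched, for any fixed $\epsilon>0$ and all large $t$, between $\mathscr{C}_\xi$ and the slightly longer cylinder $\mathscr{C}^{(\epsilon)}_\xi := \{-\epsilon<x_1<\xi+\epsilon,\ x_2^2+\dots+x_d^2<1\}$, the two end caps having Lebesgue measure $O(\rho^d)$.

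Next I would verify the hypotheses of \autoref{mainthm} and control that error. The set $\mathcal{A} = \mathscr{C}_\xi\times\mathcal{W}$ is a bounded Borel subset of $\mathbb{A}^d$ ($\mathscr{C}_\xi$ is bounded, $\mathcal{W}$ is bounded by hypothesis), and $m(\partial\mathcal{A}) = 0$: writing the Haar measure on $\mathbb{A}^d = \mathbb{R}^d\times\mathbb{A}_f^d$ as $m = \operatorname{Leb}_d\otimes m_f$, one has $\partial\mathcal{A}\subseteq(\partial\mathscr{C}_\xi\times\overline{\mathcal{W}})\cup(\overline{\mathscr{C}_\xi}\times\partial\mathcal{W})$, while $\operatorname{Leb}_d(\partial\mathscr{C}_\xi)=0$ and $m_f(\partial\mathcal{W})=0$ --- this last being exactly the standing assumption on $\mathcal{W}$. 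So \autoref{mainthm} applies and gives a limit for $\lambda(\{\bm v:\#(\mathcal{A}\,\iota(\cdots)\cap\delta(\mathbb{Q})^d A)\ge 1\})$. To pass from this to the count for the true rescaled $\rho$-neighbourhood --- and to obtain convergence for \emph{every} $\xi>0$ --- I would squeeze: by monotonicity (the first items of \autoref{countprop} and \autoref{countpropasl}), that count lies between the counts for $\mathcal{A}$ and for $\mathcal{A}_\epsilon := \mathscr{C}^{(\epsilon)}_\xi\times\mathcal{W}$, so applying \autoref{mainthm} to both, the limsup and liminf over $t$ are trapped between $\mu(\{g:(\mathscr{C}_\xi\times\mathcal{W})\cap\delta(\mathbb{Q})^d g\ne\varnothing\})$ and $\mu(\{g:\mathcal{A}_\epsilon\cap\delta(\mathbb{Q})^d g\ne\varnothing\})$; letting $\epsilon\downarrow 0$, the latter decreases to $\mu(\{g:([0,\xi]\times B^{d-1}\times\mathcal{W})\cap\delta(\mathbb{Q})^d g\ne\varnothing\})$, which coincides with the former because $(\{0,\xi\}\times B^{d-1})\times\mathcal{W}$ is $m$-null and hence, by the fourth items of \autoref{countprop} and \autoref{countpropasl}, invisible to the limiting measure. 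Taking complements then shows that the limit in the corollary exists, does not depend on $\lambda$ (which is built into \autoref{mainthm}), and is the $\mu$-measure of the set of $g$ for which the relevant set avoids $\delta(\mathbb{Q})^d g$; since $\mathscr{C}_0=\varnothing$ we get $F_{\mathcal{P},\bm\alpha}(0)=1$, so $F_{\mathcal{P},\bm\alpha}$ maps $\mathbb{R}_{\ge 0}$ into $[0,1]$. (Note $\bm 0\notin\mathscr{C}_\xi\times\mathcal{W}$, since $\mathscr{C}_\xi$ excludes $x_1=0$, so removing the origin or not is immaterial here.)

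It remains to record the two explicit formulas. When $\bm\alpha\in\mathbb{R}^d\setminus\mathbb{Q}^d$, \autoref{mainthm} places the limit on $\mathrm{ASL}_d(\mathbb{Q})\backslash\mathrm{ASL}_d(\mathbb{A})$, and since $\delta(\mathbb{Q})^d g$ meets $\mathscr{C}_\xi\times\mathcal{W}$ if and only if $\mathcal{P}(\delta(\mathbb{Q})^d g,\mathcal{W})\cap\mathscr{C}_\xi\ne\varnothing$, this is \eqref{fpllimdistirr}; as the resulting expression involves neither $\bm\alpha$ nor $\bm q$ nor $A$, it is manifestly independent of $\bm q$. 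When $\bm\alpha\in\mathbb{Q}^d$, \autoref{mainthm} gives instead an integral over $\mathrm{SL}_d(\mathbb{Q})\backslash\mathrm{SL}_d(\mathbb{A})$ of the indicator that $\big((\mathscr{C}_\xi\times\mathcal{W})-(\bm 0,\bm\beta A_f)\big)\cap\delta(\mathbb{Q})^d g\setminus\{\bm 0\}=\varnothing$ with $\bm\beta=-\delta(\bm\alpha)_f$; a direct check identifies $(\mathscr{C}_\xi\times\mathcal{W})-(\bm 0,\bm\beta A_f)$ with $\mathscr{C}_\xi\times\mathcal{W}_{\bm\beta}$ in the notation of the statement, and the same dictionary turns this into \eqref{fpllimdistrat}. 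The only step with genuine content beyond \autoref{mainthm} is the geometric reduction of the second paragraph: renormalising the Lorentz trajectory correctly, matching the $K(\bm v)$ and translation conventions, and controlling the $O(\rho^d)$ end caps so that the limiting cutoff is \emph{exactly} $\mathscr{C}_\xi$. This runs parallel to the corresponding reduction in \cite{marklof_strombergsson_free_path_length_2010}, so I would carry it out by adapting their computation, the only new feature being the adelic bookkeeping --- the window $\mathcal{W}$ in the finite component, and the translated window $\mathcal{W}_{\bm\beta}$ in the rational case.
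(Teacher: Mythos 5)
Your proposal is correct and follows the same architecture as the paper's proof: apply \autoref{mainthm} to $\mathcal{A}=\mathscr{C}_\xi\times\mathcal{W}$ (with $M=A$ and $\bm\alpha=-\bm qA_\infty^{-1}$), check $m(\partial\mathcal{A})=0$ from $m_f(\partial\mathcal{W})=0$, sandwich the renormalised $\rho$-neighbourhood of the trajectory between two cylinders, and then translate the resulting measure through the cut-and-project dictionary, treating the rational and irrational cases of $\bm\alpha$ separately. The one place where you diverge is the removal of the auxiliary $\varepsilon$: the paper perturbs the \emph{length} of the cylinder, obtains $F_{\mathcal{P},\bm\alpha}(\xi+\varepsilon)\le\liminf\le\limsup\le F_{\mathcal{P},\bm\alpha}(\xi-\varepsilon)$, and then invokes the continuity of $F_{\mathcal{P},\bm\alpha}$, which it proves separately in \autoref{propslimdistr} via a Siegel--Weil estimate on $\vol(\mathscr{C}_{\xi+h}\triangle\mathscr{C}_\xi)$; you instead perturb the \emph{set} $\mathcal{A}$ to $\mathcal{A}_\epsilon=\mathscr{C}_\xi^{(\epsilon)}\times\mathcal{W}$ and let $\epsilon\downarrow 0$ using monotone convergence together with item 4 of \autoref{countprop} (resp.\ \autoref{countpropasl}) applied to the $m$-null set $(\{0,\xi\}\times B^{d-1})\times\mathcal{W}$. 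The two devices are close cousins (both ultimately rest on the Siegel--Weil formula), but yours is self-contained within the counting lemmas of Section 5 and does not require the continuity theorem as an external input, at the cost of a small extra verification that the decreasing intersection commutes with the event $\{\mathcal{A}_\epsilon\cap\delta(\mathbb{Q})^dg\ne\emptyset\}$ (which holds because $\delta(\mathbb{Q})^dg$ meets the bounded set $\mathcal{A}_1$ in finitely many points). Everything else --- the hypothesis check, the monotonicity step via item 1 of the counting lemmas, the observation that $\bm 0\notin\mathscr{C}_\xi\times\mathcal{W}$ so the deletion of the origin in the rational case is immaterial, and the identification of the shifted window $\mathcal{W}_{\bm\beta}$ --- matches the paper.
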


\begin{proof}
This essentially follows from \autoref{mainthm} applied to the set $\mathcal{A} = \mathscr{C}_\xi \times \mathcal{W}$.

Our assumption that $m_f(\partial \mathcal{W})=0$ allows us to verify that $m(\partial \mathcal{A})=0$. 

To see why this implies \autoref{fpl}, notice that \begin{align}
\label{longcyl} 
\lambda(\{ \bm v \in S^{d-1} \, : \, \widetilde{\mathscr{C}} \cap \mathcal{P} = \emptyset \}) &\leq \lambda(\{ \bm v \in S^{d-1} \, : \, \rho^{d-1} \tau(\bm q, \bm v; \rho) \ge \xi \}) \\
\label{shortcyl}
 &\leq \lambda(\{ \bm v \in S^{d-1} \, : \, \underline{\mathscr{C}} \cap \mathcal{P} = \emptyset \}) \end{align}
where $\underline{\mathscr{C}}$ is the open cylinder of radius $\rho$ about the line segment from $\bm q$ to $\bm q+(\rho^{1-d} \xi - \rho) \bm v$ and $\widetilde{\mathscr{C}}$ is the open cylinder of radius $\rho$ about the line segment from $\bm q$ to $\bm q+(\rho^{1-d}\xi + \rho)\bm v$.
In fact it is convenient to replace, for $\varepsilon > 0$ and $\rho$ small enough, $\underline{\mathscr{C}}$ by the slightly shorter open cylinder of radius $\rho$ about the line segment from $\bm q$ to $\bm q+\rho^{1-d} (\xi - \varepsilon) \bm v$ and $\widetilde{\mathscr{C}}$ by the slightly longer open cylinder of radius $\rho$ about the line segment from $\bm q$ to $\bm q+\rho^{1-d} (\xi + \varepsilon)\bm v$. 
We still denote those cylinders by $\underline{\mathscr{C}}$ and $\widetilde{\mathscr{C}}$ respectively, so the above upper and lower bound stay exactly the same.
The point is that we can write \[ \underline{\mathscr{C}} = \mathscr{C}_{\xi-\varepsilon} \Phi^{\log \rho} K(\bm v)^{-1}(I_d, \bm q) \] and similarly \[ \widetilde{\mathscr{C}} = \mathscr{C}_{\xi+\varepsilon} \Phi^{\log \rho} K(\bm v)^{-1}(I_d, \bm q). \]
Hence \[ \underline{\mathscr{C}} \cap \mathcal{P} = \emptyset \iff (\mathscr{C}_{\xi-\varepsilon} \times \mathcal{W}) \cap \delta(\mathbb{Q})^d A \iota((I_d,-\bm q)K(\bm v)\Phi^{-\log \rho}) = \emptyset \] and similarly for $\widetilde{\mathscr{C}}$.

At this point we can indeed apply our main theorem (\autoref{mainthm}) to get that \eqref{shortcyl} converges,  as $\rho \to 0$,  to the expression for $F_{\mathcal{P}, \bm \alpha}(\xi - \varepsilon)$ given by \eqref{fpllimdistirr} and \eqref{fpllimdistrat} while the left-hand side of \eqref{longcyl} converges to $F_{\mathcal{P}, \bm \alpha}(\xi + \varepsilon)$.
Using the continuity of the expressions \eqref{fpllimdistirr} and \eqref{fpllimdistrat} (see \autoref{propslimdistr}), we can let $\varepsilon$ go to $0$ to get the result.

\end{proof}

\subsection{Local statistics of directions} \label{directions}
For $\bm \alpha \in \mathbb{R}^d$, consider the set of nonzero points of $\mathcal{P}+\bm \alpha$ inside the open $d$-ball of radius $T>0$ centred at the origin, or more generally, for $0 \le c < 1$, 
\begin{equation} \mathcal{P}_T(\bm \alpha, c) = ((\mathcal{P} + \bm \alpha) \cap \{\bm x \in \mathbb{R}^d \, : \, cT \le \| \bm x \|_2 < T\})  \setminus \{ \mathbf{0} \}. \end{equation}
Provided $m_f(\partial \mathcal{W})=0$ we have, for every $\bm \alpha \in \mathbb{R}^d$ and every $0 \le c < 1$, \begin{equation} \lim_{T \to \infty} \frac{\# \mathcal{P}_T(\bm \alpha, c)}{T^d} = (1-c^d) m_f(\mathcal{W}) \operatorname{vol}(\mathcal{B}^d) \end{equation}
where $m_f$ is the Haar measure on $\mathbb{A}_f^d$.
For a fixed $c \in [0,1)$ and each $T > 0$, as $\bm y$ ranges through $\mathcal{P}_T(\bm \alpha, c)$, we are interested in the distribution of the directions $\dfrac {\bm y}{\| \bm y \|_2} \in S^{d-1}$, counted with multiplicity.

The above counting asymptotic shows that the set of directions is uniformly distributed over $S^{d-1}$ as $T \to \infty$.
To understand the local statistics of the directions to points in $\mathcal{P}_T(\bm \alpha, c)$, it is enough to look at the probability of finding $r$ directions in a small open disc $\mathcal{D}_T(c, \sigma, \bm v) \subset S^{d-1}$ with random centre $\bm v \in S^{d-1}$ and radius chosen so that the area of the disc is equal to $\frac {\sigma d}{(1- c^d) m_f(\mathcal{W}) T^d}$ with $\sigma > 0$ fixed.
This way, defining the random variable \begin{equation} \mathcal{N}_T(c, \sigma, \bm v) = \# \left\{\bm y \in \mathcal{P}_T(\bm \alpha, c) \, : \, \frac {\bm y}{\|\bm y\|_2} \in \mathcal{D}_T(c, \sigma, \bm v) \right\}, \end{equation}
we have $\mathbb{E}(\mathcal{N}_T(c, \sigma, \bm v)) \xrightarrow[T \to \infty]{} \sigma$.

Define, for $c \in [0, 1)$ and $\sigma > 0$, the cone
\begin{align*}
 \mathscr{K}_{c, \sigma} = \Bigg\{ (x_1, \ldots, x_d) &\in \mathbb{R}^d \, : \, c < x_1 < 1, \\
  & \hspace{-0.2cm} \|(x_2, \ldots, x_d) \|_2 \le \left( \frac {\sigma d}{(1-c^d) m_f(\mathcal{W}) \operatorname{vol}(\mathcal{B}^{d-1})} \right)^{1/(d-1)} x_1 \Bigg\}. 
 \end{align*}

\begin{cor}\label{dir} For every lattice $\mathcal{L} = \mathbb{Q}^d A$ in $\mathbb{A}^d$, $A \in \mathrm{SL}_d(\mathbb{A}_f)$, every bounded $\mathcal{W} \subset \mathbb{A}_f^d$ with $m_f(\partial \mathcal{W})=0$, every $\bm \alpha$ in $\mathbb{R}^d$, every $c \in [0, 1)$, there exists a function $G_{\mathcal{P}, \bm \alpha, c} \colon \mathbb{R}_{\ge 0} \times \mathbb{Z}_{\ge 0} \to [0,1]$ such that for every Borel probability measure $\lambda$ on $S^{d-1}$ which is absolutely continuous with respect to the Lebesgue measure, every $r \in \mathbb{Z}_{\ge 0}$ and every $\sigma > 0$,
\begin{equation} \lim_{T \to \infty} \lambda(\{ \bm v \in S^{d-1} \, : \, \mathcal{N}_T(c, \sigma, \bm v) = r \}) = G_{\mathcal{P}, \bm \alpha, c}(\sigma, r). \end{equation}
Moreover, for $\bm \alpha \in \mathbb{R}^d \setminus \mathbb{Q}^d$, the limiting distribution $G_{\mathcal{P}, \bm \alpha, c}$ is independent of $\bm \alpha$ and given by
\[ G_{\mathcal{P}, c}(\sigma, r) = \mu(\{M \in \mathrm{ASL}_d(\mathbb{Q}) \backslash \mathrm{ASL}_d(\mathbb{A}) \, : \, \#(\mathcal{P}(\delta(\mathbb{Q})^d M, \mathcal{W}) \cap \mathscr{K}_{c, \sigma}) = r \}). \]
For $\bm \alpha \in \mathbb{Q}^d$, the limiting distribution $G_{\mathcal{P}, \bm \alpha, c}$ is given by 
\[ G_{\mathcal{P}, \bm \alpha, c}(\sigma, r) = \mu(\{M \in \mathrm{SL}_d(\mathbb{Q}) \backslash \mathrm{SL}_d(\mathbb{A}) \, : \, \#(\mathcal{P}(\delta(\mathbb{Q})^d M, \mathcal{W}_{\bm \beta}) \cap \mathscr{K}_{c, \sigma}) = r \}) \]
where $\bm \beta \in \delta(\mathbb{Q})^d$ is such that $\bm \alpha = \pi(\bm \beta)$ and $\mathcal{W}_{\bm \beta} = \mathcal{W} - \pi_{\text{int}}(\bm \beta)A_f$.
 \end{cor}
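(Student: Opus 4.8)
The plan is to derive \autoref{dir} from \autoref{mainthm} applied to the bounded Borel set $\mathcal{A} = \mathscr{K}_{c,\sigma} \times \mathcal{W} := \{ \bm x \in \mathbb{A}^d : \pi(\bm x) \in \mathscr{K}_{c,\sigma}, \, \pi_{\text{int}}(\bm x) \in \mathcal{W} \}$, in a way completely parallel to the proof of \autoref{fpl}, with the cylinder $\mathscr{C}_\xi$ there replaced by the cone $\mathscr{K}_{c,\sigma}$ here. First I would check the hypotheses of \autoref{mainthm} for $\mathcal{A}$: it is bounded because $\mathscr{K}_{c,\sigma} \subset \{ 0 < x_1 < 1 \}$ forces all coordinates of a point of $\mathscr{K}_{c,\sigma}$ to be bounded and $\mathcal{W}$ is bounded by assumption, and $m(\partial \mathcal{A}) = 0$ because $\partial \mathcal{A} \subset (\partial \mathscr{K}_{c,\sigma} \times \overline{\mathcal{W}}) \cup (\overline{\mathscr{K}_{c,\sigma}} \times \partial \mathcal{W})$, where $\partial \mathscr{K}_{c,\sigma}$ is a finite union of pieces of hyperplanes and of one conical hypersurface, hence Lebesgue-null in $\mathbb{R}^d$, while $m_f(\partial \mathcal{W}) = 0$ by hypothesis, so both pieces are null for the product measure $m$.

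The geometric heart of the argument is the following reduction. Fix $\bm v \in S^{d-1}$ and write $\theta_T$ for the radius of the geodesic disc $\mathcal{D}_T(c, \sigma, \bm v)$, which is of order $T^{-d/(d-1)}$; the normalisation in the definition of $\mathcal{D}_T$ is chosen precisely so that after rescaling the opening of the cap matches that of $\mathscr{K}_{c,\sigma}$. A point $\bm y \in (\mathcal{P} + \bm\alpha)\setminus\{\mathbf 0\}$ is counted by $\mathcal{N}_T(c, \sigma, \bm v)$ exactly when $cT \le \| \bm y \|_2 < T$ and the angle between $\bm y$ and $\bm v$ is at most $\theta_T$. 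Running this annular cap through the rotation $K(\bm v)$ and the diagonal flow $\Phi^{-t}$ with $e^{(d-1)t} = T$, i.e.\ $t = \tfrac{\log T}{d-1} \to \infty$ (which scales the first coordinate by $T$ and the transverse ones by $T^{-1/(d-1)}$), one sees --- by exactly the identity ``$\underline{\mathscr{C}} = \mathscr{C}_{\xi-\varepsilon}\Phi^{\log\rho}K(\bm v)^{-1}(I_d,\bm q)$'' of the proof of \autoref{fpl}, now with $\mathscr{C}_{\xi}$ replaced by $\mathscr{K}_{c,\sigma}$, with the role of $\bm q$ played by $-\bm\alpha$ (here $A \in \mathrm{SL}_d(\mathbb{A}_f)$, so $A_\infty = I_d$), and with the scaling parameter going to $0$ as a function of $T$ --- that, up to the two lower-order discrepancies between a spherical cap and its tangent cone and between the Euclidean ball $\{ \| \cdot \|_2 < T \}$ and the slab $\{ x_1 < 1 \}$ (which are $O(T^{-d/(d-1)})$ and $O(T^{-2d/(d-1)})$ respectively, uniformly in $\bm v$), the count $\mathcal{N}_T(c, \sigma, \bm v)$ is squeezed, for every $\varepsilon > 0$ and all $T$ large, between $\# \big( \mathcal{A}_\varepsilon^{-} \iota((\Phi^{-t}, \mathbf 0)(K(\bm v), \mathbf 0)(I_d, \bm\alpha)) \cap \delta(\mathbb{Q})^d M \big)$ and the same quantity with $\mathcal{A}_\varepsilon^{-}$ replaced by $\mathcal{A}_\varepsilon^{+}$, where $\mathcal{A}_\varepsilon^{\pm} = \mathscr{K}^{\pm}_\varepsilon \times \mathcal{W}$ for cones $\mathscr{K}^{-}_\varepsilon \subset \mathscr{K}_{c,\sigma} \subset \mathscr{K}^{+}_\varepsilon$ obtained by perturbing the defining inequalities and decreasing to $\mathscr{K}_{c,\sigma}$ as $\varepsilon \to 0$, and $M \in \mathrm{SL}_d(\mathbb{A})$ (together with the sign of the affine parameter of \autoref{mainthm}) is the one determined by $A$ and $\bm\alpha$ through the same rearrangement as in the proof of \autoref{fpl}. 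Since $\Phi^{-t}$ and $K(\bm v)$ act only at the real place through $\iota$, the window coordinate is untouched; passing between the parametrisation of $\mathcal{D}_T$ by $\bm v$ and the parametrisation by $K(\bm v)$ in \autoref{mainthm} is an a.e.-smooth change of variables on $S^{d-1}$, which is harmless because \autoref{mainthm} holds for \emph{every} absolutely continuous $\lambda$ (and for the pushforward of $\lambda$ in particular).

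Next I would apply \autoref{mainthm} to each of the \emph{fixed} sets $\mathcal{A}_\varepsilon^{\pm}$ and each $r \in \mathbb{Z}_{\ge 0}$, take $\limsup$ and $\liminf$ as $T \to \infty$, and then let $\varepsilon \to 0$, using the continuity in $(c,\sigma)$ of the resulting $\mu$-expressions --- which follows, just as in \autoref{propslimdistr}, from property 4 of the relevant counting lemma applied to $(\mathscr{K}^{+}_\varepsilon \setminus \mathscr{K}^{-}_\varepsilon) \times \mathcal{W}$, together with $m(\partial \mathscr{K}_{c,\sigma}) = 0$ and $m_f(\mathcal{W}) < \infty$. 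This yields that $\lim_{T \to \infty} \lambda(\{ \bm v : \mathcal{N}_T(c, \sigma, \bm v) \ge r \})$ exists and equals $\mu(\{ g : \#(\mathcal{A} \cap \delta(\mathbb{Q})^d g) \ge r \})$ on $\mathrm{ASL}_d(\mathbb{Q})\backslash\mathrm{ASL}_d(\mathbb{A})$ when $\bm\alpha \in \mathbb{R}^d \setminus \mathbb{Q}^d$, and $\mu(\{ g : \#((\mathcal{A} - (\mathbf 0, \bm\beta M_f)) \cap \delta(\mathbb{Q})^d g \setminus \{\mathbf 0\}) \ge r \})$ on $\mathrm{SL}_d(\mathbb{Q})\backslash\mathrm{SL}_d(\mathbb{A})$ when $\bm\alpha \in \mathbb{Q}^d$, exactly as in \autoref{mainthm} (here $\mathbf 0 \notin \mathscr{K}_{c,\sigma}$, so removing the origin is vacuous and the two cases are consistent). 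Subtracting, $\lambda(\{ \mathcal{N}_T = r \}) = \lambda(\{ \mathcal{N}_T \ge r \}) - \lambda(\{ \mathcal{N}_T \ge r+1 \})$ converges, and since $\pi$ is injective on $\delta(\mathbb{Q})^d g$ (no nonzero vector of such a lattice has vanishing real component), the bijection $\#((\mathscr{K}_{c,\sigma} \times \mathcal{W}) \cap \delta(\mathbb{Q})^d g) = \#(\mathcal{P}(\delta(\mathbb{Q})^d g, \mathcal{W}) \cap \mathscr{K}_{c,\sigma})$ turns the two limits into the asserted formulas for $G_{\mathcal{P}, \bm\alpha, c}(\sigma, r)$, with $\mathcal{W}$ replaced in the rational case by $\mathcal{W}_{\bm\beta} = \mathcal{W} - \pi_{\text{int}}(\bm\beta) A_f$, as produced by the $(\mathbf 0, \bm\beta M_f)$-shift in \autoref{mainthm}.

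The step I expect to be the main obstacle is the geometric squeezing in the second paragraph: showing rigorously, and uniformly in $\bm v$, that the random geodesic disc of radius $\theta_T$ intersected with the annulus $\{ cT \le \| \cdot \|_2 < T \}$, once transported by $\iota((\Phi^{-t}, \mathbf 0)(K(\bm v), \mathbf 0)(I_d, \bm\alpha))$, is caught between the two fixed slightly-perturbed cones (times $\mathcal{W}$) with the gap closing as $\varepsilon \to 0$, and then coupling this with the continuity of the limiting $\mu$-distributions in $(c, \sigma)$. The remaining ingredients --- the group-theoretic bookkeeping, the verification of the hypotheses of \autoref{mainthm}, and the passage from ``$\ge r$'' to ``$= r$'' --- are routine and already carried out, \emph{mutatis mutandis}, in the proof of \autoref{fpl}.
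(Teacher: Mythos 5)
Your proposal is correct and takes essentially the same route as the paper, whose entire proof of \autoref{dir} is the one-line instruction to mimic the proof of \autoref{fpl} with the cone $\mathscr{K}_{c,\sigma}$ in place of the cylinder $\mathscr{C}_\xi$; your write-up is simply a fleshed-out version of that, with the squeezing between perturbed cones playing the role of the $\xi\pm\varepsilon$ cylinders and the continuity argument of \autoref{propslimdistr} closing the gap.
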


\begin{proof} One can mimic the proof of \autoref{fpl} using the cone $\mathscr{K}_{c, \sigma}$ instead of the cylinder $\mathscr{C}_{\xi}$. \end{proof}

\begin{rmk} As mentioned in \autoref{intro}, \autoref{dir} applied to $d=2$ and $r=0$ immediately implies the existence of a limiting gap distribution (defined by \eqref{defgapdist}) as $T \to \infty$ for the sequence of angles $(\alpha_j(T))$ produced by radial projection of our point set. \end{rmk}

\section{Examples of adelic model sets}\label{examples}

We give a few examples of weak adelic model sets in this section. They seem to originate in Meyer's paper \cite{Meyeradeles} and their diffraction patterns were studied by Baake, Moody and Pleasants \cite{BaakeMoodyPleasants}. 

\subsection{Primitive lattice points}
By taking the window $\mathcal{W} = \prod_p (\mathbb{Z}_p^d \setminus p \mathbb{Z}_p^d) \subset \mathbb{A}_f^d$ (and any lattice in $\mathbb{A}^d$) in the cut-and-project construction, we obtain the primitive lattice points (of the corresponding lattice in $\mathbb{R}^d$).
For instance, with the adelic lattice $\mathbb{Q}^d$, we obtain the visible lattice points.

Note that $\accentset{\circ}{\mathcal{W}} = \emptyset$. Indeed, $\widehat{\mathbb{Z}}^d$ is endowed with the product topology so if $\mathcal{W}$ were to contain a non-empty open set, it would contain a basic open set of the form $\prod_{p \in \mathbb{P}} U_p$ where each $U_p$ is an open subset of $\mathbb{Z}_p^d$ and all but finitely many of these are actually equal to $\mathbb{Z}_p^d$. It is however apparent that $\mathcal{W}$ cannot contain such a basic open set and so must have empty interior.

\subsection{k-free coordinates}
We can also take $\prod_{p \in \mathbb{P}} (\mathbb{Z}_p \setminus p^k \mathbb{Z}_p)$ for some integer $k \ge 2$ as our basic building block and use it for one/all coordinates and possibly different values of $k$ for different coordinates to define our window set. We thus obtain lattice points one/all of whose coordinates are $k$-free with possibly different values of $k$ at each coordinate.

Again, all of those windows have empty interior.

\begin{figure}[!ht]
\begin{centering}\includegraphics[width=0.8\textwidth]{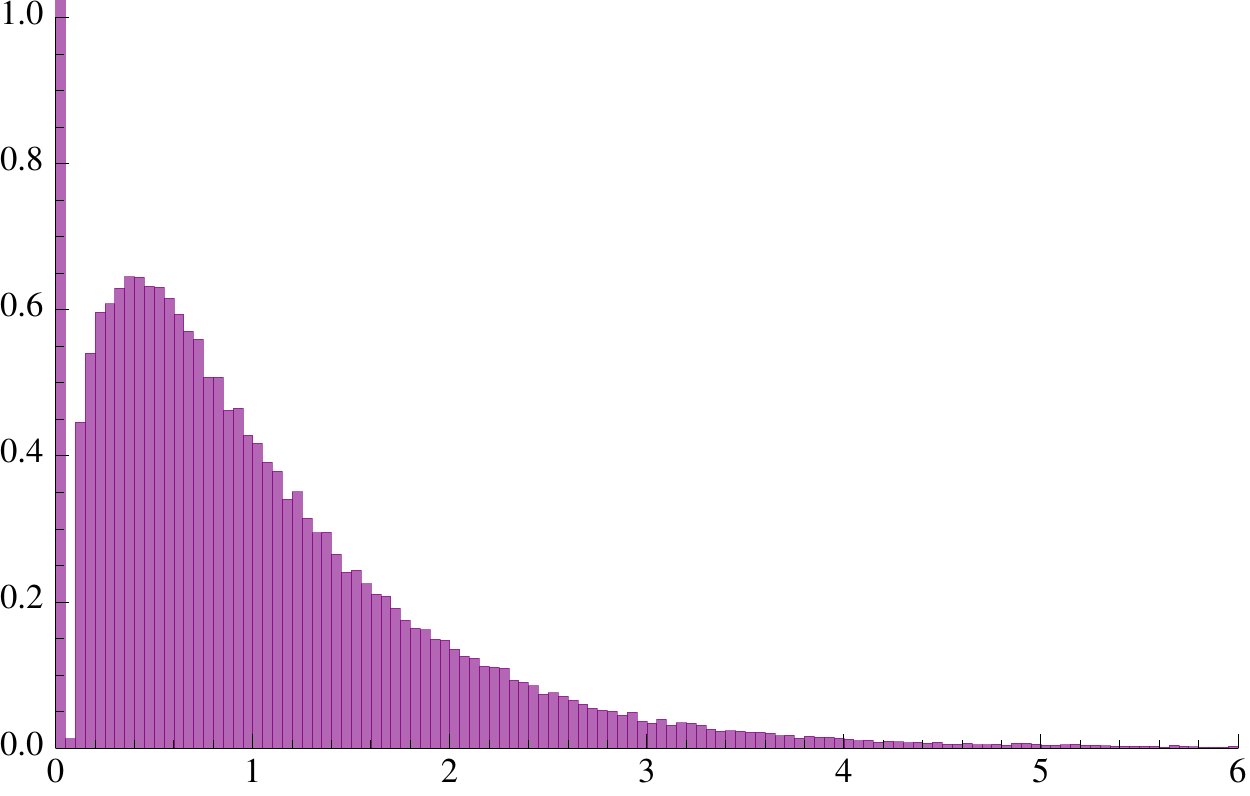}\par\end{centering}
\caption{The figure shows the gap distribution in the sequence obtained after radial projection of the lattice points both of whose coordinates are squarefree, shifted by $\bm \alpha =(\frac 12,\frac 12)$, $T=500$.}
\end{figure}

\begin{figure}[!ht]
\begin{centering}\includegraphics[width=0.8\textwidth]{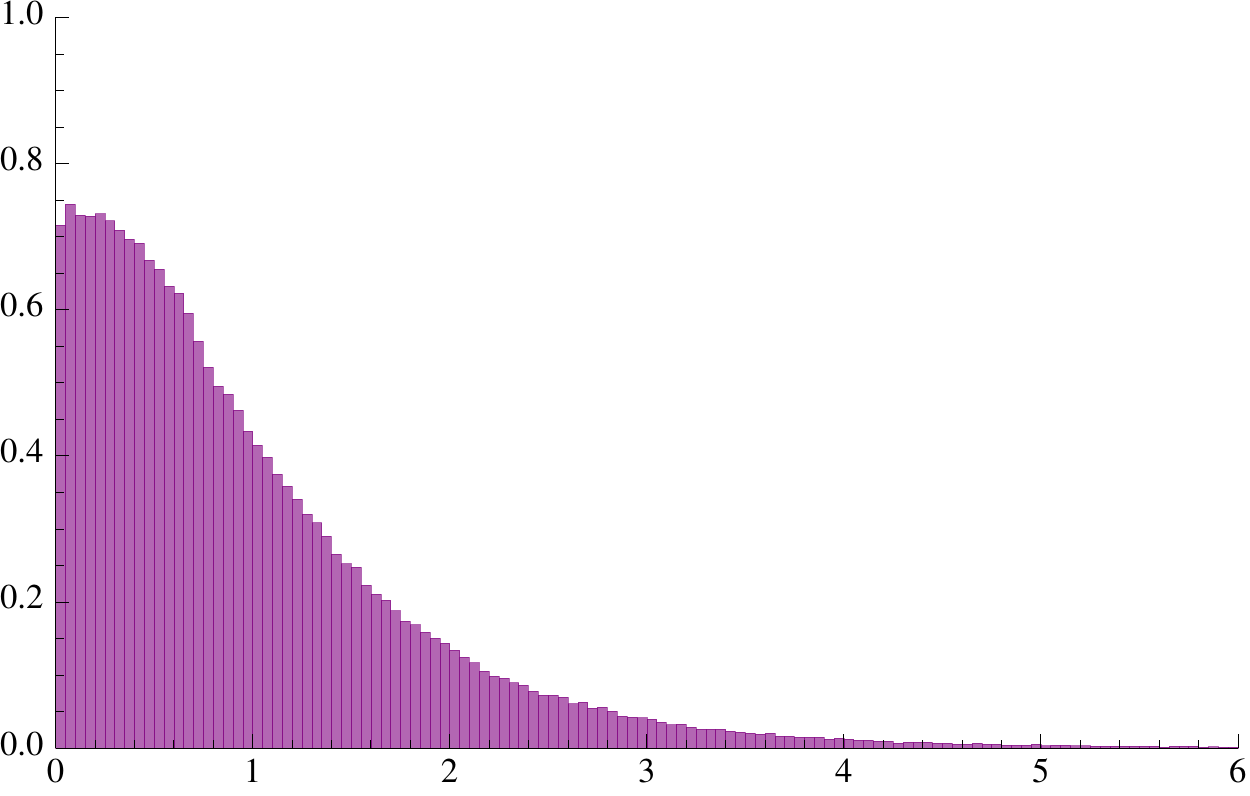}\par\end{centering}
\caption{The figure shows the gap distribution in the sequence obtained after radial projection of the lattice points both of whose coordinates are squarefree, shifted by $\bm \alpha =(\sqrt{2},\sqrt{3})$, $T=500$.}
\end{figure}



\section{A key probabilistic result}\label{densityincr}

Throughout this section, we consider a set of points $\mathcal{P} \subset \mathbb{R}^d$ and, for each $\varepsilon > 0$, a set of points $\mathcal{P}^\varepsilon$ such that $\mathcal{P} \subset \mathcal{P}^\varepsilon \subset \mathbb{R}^d$.

For $T>0$, we denote by $\mathcal{B}_T$ the ball of radius $T$ in $\mathbb{R}^d$ centred at the origin.
Assume that there exists a $c > 0$ such that these point sets satisfy \begin{equation}\label{asydens} \lim_{T \to +\infty} \frac {\#(\mathcal{P} \cap \mathcal{B}_T)}{\operatorname{vol}(\mathcal{B}_T)} = c  \end{equation} and \begin{equation}\label{upperasydens} \forall \varepsilon > 0, \exists T_0 > 0 \, | \, \forall T \ge T_0, \, \#((\mathcal{P}^\varepsilon  \setminus \mathcal{P}) \cap \mathcal{B}_T) \le \varepsilon \operatorname{vol}(\mathcal{B}_T). \end{equation}

Define, for a fixed bounded Jordan measurable set $\mathcal{A} \subset \mathbb{R}^d$, $t > 0$ and $\bm v \in S^{d-1}$ distributed according to a Borel probability measure $\lambda$ which is absolutely continuous with respect to the Lebesgue measure on the sphere, the random variables \[ \mathcal{N}_t = \#(\mathcal{A} \Phi^{-t} K(\bm v)  \cap \mathcal{P}) \] and \[ \mathcal{N}_t^\varepsilon = \#( \mathcal{A} \Phi^{-t} K(\bm v) \cap \mathcal{P}^\varepsilon). \]

\begin{lemma} \label{boundedexpectation}
For every bounded Jordan measurable set $\mathcal{A} \subset \mathbb{R}^d$, there exists a constant $C_\mathcal{A} \ge 0$ such that for every $t > 0$,
\begin{equation} \int_{S^{d-1}} \mathcal{N}_t d\Leb \le C_\mathcal{A}. \end{equation}
\end{lemma}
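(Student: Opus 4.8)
The plan is to bound the integral $\int_{S^{d-1}} \mathcal{N}_t\, d\Leb$ by unfolding the count $\mathcal{N}_t = \#(\mathcal{A}\Phi^{-t}K(\bm v) \cap \mathcal{P})$ into a sum over lattice points and exploiting the asymptotic density hypothesis \eqref{asydens}. First I would observe that $\mathcal{N}_t$ counts points of $\mathcal{P}$ landing in the rotated, contracted region $\mathcal{A}\Phi^{-t}K(\bm v)$; since $\mathcal{A}$ is bounded, say $\mathcal{A} \subset \mathcal{B}_{R}$ for some $R>0$, the image $\mathcal{A}\Phi^{-t}K(\bm v)$ is contained in $\mathcal{B}_{R e^{t}}$ (the largest eigenvalue of $\Phi^{-t}$ is $e^{t}$, coming from the $e^{-(d-1)t}$ block inverted). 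Thus $\mathcal{N}_t \le \#(\mathcal{P} \cap \mathcal{B}_{Re^t})$, which is a crude bound independent of $\bm v$; but this grows like $e^{dt}$, so it is far too lossy on its own. The point of integrating over the sphere is that the region $\mathcal{A}\Phi^{-t}K(\bm v)$ is a long thin ``needle'' (length $\sim e^t$ in one direction, width $\sim e^{-(d-1)t}$ in the others, so volume $\asymp \vol(\mathcal{A})$), and as $\bm v$ varies this needle sweeps around; the average number of lattice points it catches should be controlled by $\vol(\mathcal{A})$ times the density $c$, up to constants.

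The key step is therefore to swap the sphere-integral and the lattice sum:
\[
\int_{S^{d-1}} \mathcal{N}_t \, d\Leb = \int_{S^{d-1}} \sum_{\bm p \in \mathcal{P}} \chi_{\mathcal{A}\Phi^{-t}K(\bm v)}(\bm p)\, d\Leb(\bm v) = \sum_{\bm p \in \mathcal{P}} \Leb\big(\{\bm v : \bm p \in \mathcal{A}\Phi^{-t}K(\bm v)\}\big).
\]
For a fixed $\bm p \ne \bm 0$ with $\|\bm p\| \in [\rho_1, \rho_2]$, the condition $\bm p \in \mathcal{A}\Phi^{-t}K(\bm v)$ means $\bm p K(\bm v)^{-1}\Phi^{t} \in \mathcal{A}$; writing $\bm p = \|\bm p\|\, \bm u$ with $\bm u \in S^{d-1}$, the set of admissible $\bm v$ is governed by how $\bm u$ is rotated by $K(\bm v)^{-1}$ and then stretched by $\Phi^t$. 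Because $\mathcal{A}$ is bounded (in $\mathcal{B}_R$), one needs $\|\bm p K(\bm v)^{-1}\Phi^t\| \le R$; since $\Phi^t$ expands by $e^t$ in directions orthogonal to $\bm e_1$, this forces $\bm p K(\bm v)^{-1}$ to lie within distance $Re^{-t}$ of the $\bm e_1$-axis, i.e. $\bm u$ must be mapped by $K(\bm v)^{-1}$ into a thin cap of angular width $\asymp e^{-t}/\|\bm p\|$ around $\pm\bm e_1$. Using the smoothness of $K$ (off one point) and that $K(\bm v)^{-1}\bm e_1 = \bm v$ up to the structure of the map, the Lebesgue measure of such $\bm v$ is $\ll (e^{-t}/\|\bm p\|)^{d-1}$ — but also capped at $R e^{-(d-1)t} \cdot (\text{something})$ when $\|\bm p\|$ is small, and the point only contributes at all when $\|\bm p\| \lesssim Re^t$ and $\|\bm p\| \gtrsim (\min\text{width of }\mathcal{A}) e^{-(d-1)t}$ — i.e. there is also a lower cutoff. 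Summing these measures over $\bm p \in \mathcal{P}$ in dyadic shells $2^k \le \|\bm p\| < 2^{k+1}$, using \eqref{asydens} to estimate the number of $\mathcal{P}$-points in each shell by $\ll c\, 2^{kd}$, gives a geometric-type sum that converges to $\ll_{\mathcal{A}} c$, uniformly in $t$. (One should keep in mind that the argument is, up to the rotation bookkeeping, essentially the first-moment half of the Siegel-type computation already used in \autoref{countprop}, but here for the honest point set $\mathcal{P}$ rather than a random adelic lattice, using only the density assumption rather than exact equidistribution.)

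The main obstacle I expect is the careful geometric estimate of $\Leb(\{\bm v : \bm p \in \mathcal{A}\Phi^{-t}K(\bm v)\})$, in particular getting the dependence on $\|\bm p\|$ right so that the dyadic sum converges: one must correctly balance the ``thin needle'' picture (contribution $\asymp$ width$^{d-1}$/$\|\bm p\|^{d-1}$ for points at moderate distance) against the boundary effects for $\|\bm p\|$ near $e^t$ (where $\bm p$ can sit near the far tip of the needle) and near the lower cutoff (where the needle's cross-section is the bottleneck). A clean way to organise this is to note that the map $\bm v \mapsto$ (direction of $\bm p K(\bm v)^{-1}\Phi^t$) together with the smoothness of $K$ lets one change variables, reducing the measure estimate to a volume-of-tube computation in $S^{d-1}$; alternatively one can avoid the sphere geometry entirely by first passing through the Marklof--Strömbergsson equidistribution machinery, but since we only have a bound to prove (not a limit), the elementary dyadic estimate is the most direct route. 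Throughout, boundedness of $\mathcal{A}$ is what makes the sum finite, and Jordan measurability is not actually needed for this lemma — boundedness suffices — so I would state the constant $C_{\mathcal{A}}$ as depending only on $\vol(\mathcal{B}_R)$ with $\mathcal{A} \subset \mathcal{B}_R$, and on $c$ and $d$.
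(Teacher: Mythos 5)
Your proposal is correct and follows essentially the same route as the paper: unfold the count into a sum over $\mathcal{P}$, replace $\mathcal{A}$ by a containing ball, estimate for each fixed point the spherical measure of the set of $\bm v$ that capture it (the paper packages this as the cap function $A_t$ and a Riemann--Stieltjes integral against $-A_t$, following Marklof--Str\"ombergsson, where you use dyadic shells), and then sum using the density hypothesis \eqref{asydens}. One small slip in your heuristic: the largest eigenvalue of $\Phi^{-t}$ is $e^{(d-1)t}$, not $e^{t}$, so the ``needle'' has length $\asymp e^{(d-1)t}$ and width $\asymp e^{-t}$; your subsequent cap estimate $\ll (e^{-t}/\|\bm p\|)^{d-1}$ is nevertheless the correct one, and the dyadic sum still gives a bound $\ll c\,\vol(\mathcal{B}_{R})$ uniform in $t$, matching the paper's conclusion.
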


\begin{proof}
We follow the calculation performed in \cite[Proof of Theorem 5.1]{marklof_strombergsson_quasicrystals_fpl_2014}.
We write \begin{equation} \int_{S^{d-1}} \mathcal{N}_t d\Leb = \int_{S^{d-1}} \sum_{x \in \mathcal{P} K(\bm v)^{-1} \Phi^t} \chi_\mathcal{A}(x) d\Leb. \end{equation}
Since $\mathcal{A}$ is bounded, we can find $R_\mathcal{A} > 0$ such that $\mathcal{A} \subset \mathcal{B}_{R_\mathcal{A}}$. Thus
\begin{align}
\int_{S^{d-1}} \mathcal{N}_t d\Leb &\le \int_{S^{d-1}} \sum_{x \in \mathcal{P} K(\bm v)^{-1} \Phi^t} \chi_{\mathcal{B}_{R_\mathcal{A}}}(x) d\Leb \\  
&= \sum_{\bm x \in \mathcal{P}} \int_{S^{d-1}} \chi_{\mathcal{B}_{R_\mathcal{A}} \Phi^{-t} K(\bm v)}(x) d\Leb.
\end{align}
Now for every $t > 0$ and $\bm x \in \mathcal{P} \setminus \{ \bm 0 \}$, we have 
\begin{equation}
\frac 1{\Leb(S^{d-1})} \int_{S^{d-1}} \chi_{\mathcal{B}_{R_\mathcal{A}} \Phi^{-t} K(\bm v)}(\bm x) d\Leb = A_t \left(\frac {\| \bm x \|}{R_\mathcal{A}} \right	),
\end{equation}
where for $\tau > 0$, 
\begin{equation} A_t(\tau) = \frac {\Leb(S^{d-1} \cap \tau^{-1} \mathcal{B}_1 \Phi^{-t})}{\Leb(S^{d-1})}. \end{equation}
As in \cite[Proof of Theorem 5.1]{marklof_strombergsson_quasicrystals_fpl_2014} we can now rewrite \[ \sum_{\bm x \in \mathcal{P} \setminus \{ \bm 0 \}} \int_{S^{d-1}} \chi_{\mathcal{B}_{R_\mathcal{A}} \Phi^{-t} K(\bm v)}(\bm x) d\Leb \] as a Riemann--Stieltjes integral with respect to $-A_t$ which is continuous and increasing. We can then use \eqref{asydens} instead of \cite[Proposition 3.2]{marklof_strombergsson_quasicrystals_fpl_2014} to reproduce the asymptotic estimates obtained by Marklof and Str\"ombergsson and deduce that 
\begin{equation} \lim_{t \to +\infty} \frac 1{\Leb(S^{d-1})} \int_{S^{d-1}} \mathcal{N}_t d\Leb \le \vol(B_{R_{\mathcal{A}}}). \end{equation}
The statement of the lemma follows.
\end{proof}

\begin{thm}\label{proba}
If, for every $\varepsilon > 0$, $\mathcal{P}$ and $\mathcal{P}^\varepsilon$ are as above and $(\mathcal{N}_t^\varepsilon)_{t >0}$ converges in distribution (to $\mathcal{N}^\varepsilon$), then so does $(\mathcal{N}_t)_{t>0}$ (to $\mathcal{N}$).
Furthermore, $\mathcal{N}^\varepsilon \xrightarrow[\varepsilon \to 0]{d} \mathcal{N}$.
\end{thm}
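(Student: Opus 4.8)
The plan is to extract, from Lemma~\ref{boundedexpectation} and the density hypotheses~\eqref{asydens}--\eqref{upperasydens}, a uniform tail bound that squeezes the laws of $\mathcal N_t$ between those of $\mathcal N_t^\varepsilon$ (which converge by hypothesis) up to an error controlled by $\varepsilon$, and then let $\varepsilon\to0$. First I would observe the pointwise inequality $\mathcal N_t\le\mathcal N_t^\varepsilon$ (since $\mathcal P\subset\mathcal P^\varepsilon$), so that $\mathcal N_t^\varepsilon-\mathcal N_t=\#(\mathcal A\Phi^{-t}K(\bm v)\cap(\mathcal P^\varepsilon\setminus\mathcal P))$ is a non-negative random variable. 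The heart of the matter is to bound its expectation: running the same Riemann--Stieltjes computation as in Lemma~\ref{boundedexpectation} but with the counting measure of $\mathcal P^\varepsilon\setminus\mathcal P$ in place of that of $\mathcal P$, and using~\eqref{upperasydens} (whose right-hand side is $\varepsilon\operatorname{vol}(\mathcal B_T)$) in place of~\eqref{asydens}, one gets
\[
\limsup_{t\to\infty}\frac1{\Leb(S^{d-1})}\int_{S^{d-1}}(\mathcal N_t^\varepsilon-\mathcal N_t)\,d\Leb\le\varepsilon\,C'_{\mathcal A}
\]
for a constant $C'_{\mathcal A}$ depending only on $\mathcal A$ (essentially $\operatorname{vol}(\mathcal B_{R_{\mathcal A}})$), and hence, since $\lambda\ll\Leb$ with bounded density on the relevant region, $\limsup_t\mathbb E_\lambda(\mathcal N_t^\varepsilon-\mathcal N_t)\le\varepsilon\,C''_{\mathcal A}$.

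Next I would convert this $L^1$-smallness into a statement about distributions. Since $\mathcal N_t^\varepsilon-\mathcal N_t$ is a non-negative integer-valued random variable, Markov's inequality gives $\lambda(\mathcal N_t\ne\mathcal N_t^\varepsilon)=\lambda(\mathcal N_t^\varepsilon-\mathcal N_t\ge1)\le\mathbb E_\lambda(\mathcal N_t^\varepsilon-\mathcal N_t)$, so $\limsup_t\lambda(\mathcal N_t\ne\mathcal N_t^\varepsilon)\le\varepsilon\,C''_{\mathcal A}$. Consequently, for any $k\in\mathbb Z_{\ge0}$,
\[
\bigl|\lambda(\mathcal N_t\ge k)-\lambda(\mathcal N_t^\varepsilon\ge k)\bigr|\le\lambda(\mathcal N_t\ne\mathcal N_t^\varepsilon),
\]
so the (cumulative) distribution functions of $\mathcal N_t$ and $\mathcal N_t^\varepsilon$ differ, in the limsup as $t\to\infty$, by at most $\varepsilon\,C''_{\mathcal A}$, uniformly in $k$. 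Because $(\mathcal N_t^\varepsilon)_t$ converges in distribution to $\mathcal N^\varepsilon$ (and all variables are $\mathbb Z_{\ge0}$-valued, so convergence in distribution is just pointwise convergence of the functions $k\mapsto\lambda(\,\cdot\,\ge k)$), one deduces that $(\lambda(\mathcal N_t\ge k))_t$ is, for each fixed $k$, a real sequence whose limsup and liminf differ by at most $2\varepsilon\,C''_{\mathcal A}$; letting $\varepsilon\to0$ forces it to converge. Call the limit $q_k:=\lim_t\lambda(\mathcal N_t\ge k)$; it is a non-increasing sequence in $[0,1]$, hence defines the distribution of a random variable $\mathcal N$ provided $q_k\to0$, which follows from the uniform bound $\sup_t\mathbb E_\lambda(\mathcal N_t)<\infty$ coming from Lemma~\ref{boundedexpectation} together with Markov. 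This gives $\mathcal N_t\xrightarrow{d}\mathcal N$.

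Finally, for $\mathcal N^\varepsilon\xrightarrow[\varepsilon\to0]{d}\mathcal N$: the estimate $|\lambda(\mathcal N^\varepsilon\ge k)-q_k|\le\varepsilon\,C''_{\mathcal A}$ (obtained by passing $t\to\infty$ in $|\lambda(\mathcal N_t^\varepsilon\ge k)-\lambda(\mathcal N_t\ge k)|\le\lambda(\mathcal N_t\ne\mathcal N_t^\varepsilon)$ and using the two convergences just established) shows the distribution functions of $\mathcal N^\varepsilon$ converge pointwise to that of $\mathcal N$ as $\varepsilon\to0$, which is exactly convergence in distribution for integer-valued variables. The main obstacle, and the only genuinely technical point, is the uniform-in-$t$ expectation bound for the difference $\mathcal N_t^\varepsilon-\mathcal N_t$: one must check that the Riemann--Stieltjes argument of Lemma~\ref{boundedexpectation} goes through verbatim when the counting function of $\mathcal P$ is replaced by that of $\mathcal P^\varepsilon\setminus\mathcal P$, with~\eqref{upperasydens} supplying the needed one-sided control (the lack of an exact asymptotic density for $\mathcal P^\varepsilon\setminus\mathcal P$ is harmless since we only need an upper bound), and to track that the resulting constant is genuinely linear in $\varepsilon$. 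Everything else is bookkeeping with cumulative distribution functions of $\mathbb Z_{\ge0}$-valued random variables.
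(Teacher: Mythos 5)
Your overall strategy coincides with the paper's: both proofs hinge on the pointwise inequality $\mathcal N_t\le\mathcal N_t^\varepsilon$, on bounding $\int_{S^{d-1}}(\mathcal N_t^\varepsilon-\mathcal N_t)\,d\Leb$ by $\varepsilon C_{\mathcal A}$ via the Riemann--Stieltjes computation of \autoref{boundedexpectation} run with \eqref{upperasydens} in place of \eqref{asydens}, and on converting this into the statement that $\lambda(\mathcal N_t\ne\mathcal N_t^\varepsilon)=\lambda(\mathcal N_t^\varepsilon-\mathcal N_t\ge1)$ is small uniformly in $t$ once $\varepsilon$ is small. You differ only in the final bookkeeping: the paper establishes tightness, extracts a convergent subsequence via Prokhorov, and then shows all subsequential limits agree, whereas you run a direct Cauchy-type argument on the functions $k\mapsto\lambda(\mathcal N_t\ge k)$. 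Your route is slightly more economical and equally valid once the quantitative estimates are in place.

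The one genuine gap is the phrase ``since $\lambda\ll\Leb$ with bounded density on the relevant region''. No boundedness of the Radon--Nikodym density $h=d\lambda/d\Leb$ is assumed, and without it neither $\limsup_t\mathbb E_\lambda(\mathcal N_t^\varepsilon-\mathcal N_t)\le\varepsilon C''_{\mathcal A}$ nor $\sup_t\mathbb E_\lambda(\mathcal N_t)<\infty$ (which you invoke to show $q_k\to0$) follows from the Lebesgue-expectation bounds. The remedy, which is the one the paper uses, is a truncation of the density: for any event $E$ and any $R>0$,
\[
\lambda(E)=\int_{\{h\le R\}}\chi_E\,h\,d\Leb+\int_{\{h>R\}}\chi_E\,h\,d\Leb\le R\,\Leb(E)+\int_{\{h>R\}}h\,d\Leb,
\]
where the first term is controlled by $R$ times the Lebesgue-expectation bound via Markov's inequality (giving $R\varepsilon C_{\mathcal A}$ for the difference, and $RC_{\mathcal A}/M$ for the tail event $\{\mathcal N_t\ge M\}$), and the second term tends to $0$ as $R\to\infty$ because $h\in L^1(\Leb)$. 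Choosing $R=R(\varepsilon)\to\infty$ slowly enough that $R(\varepsilon)\varepsilon\to0$ produces a bound on $\lambda(\mathcal N_t\ne\mathcal N_t^\varepsilon)$ that vanishes with $\varepsilon$, and choosing $R=R(M)\to\infty$ with $R(M)/M\to0$ gives tightness; since your argument only needs some bound tending to $0$ with $\varepsilon$ (linearity in $\varepsilon$ is never used), the rest of your proof goes through unchanged.
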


\begin{proof} Up to rescaling, we assume that the constant $c$ in \eqref{asydens} 
is $1$.
We begin by showing that $(\mathcal{N}_t)_{t>0}$ is tight. 
By \autoref{boundedexpectation}, there exists a constant $C_\mathcal{A}$ such that for every $t > 0$, \begin{equation}\label{convNt} \int_{S^{d-1}} N_t d\Leb \le C_\mathcal{A} \end{equation}
For every $M > 0$ and every $t > 0$, we want to get an upper bound for \[ \lambda(\{ \bm v \in S^{d-1} : \mathcal{N}_t \ge M \}) = \int_{S^{d-1}} \chi_{\{ \bm v : \mathcal{N}_t \ge M \}} d\lambda. \]
Since $\lambda \ll \text{Leb}$, the Radon--Nikodym theorem guarantees that there is a unique $h \in L^1(\Leb)$ with $\int_{S^{d-1}} h d\Leb = 1$ (as $\lambda$ is a probability measure) such that 
\[ \int_{S^{d-1}} \chi_{\{ \bm v : \mathcal{N}_t \ge M \}} d\lambda = \int_{S^{d-1}} \chi_{\{ \bm v : \mathcal{N}_t \ge M \}} h d\Leb. \]
We now fix a parameter $R$ (to be chosen later) and write: 
\[ \int_{S^{d-1}} \chi_{\{ \bm v : \mathcal{N}_t \ge M \}} h d\Leb = \int_{\{ h \le R \}}  \chi_{\{ \bm v : \mathcal{N}_t \ge M \}}  h d\Leb +  \int_{\{ h > R \}}  \chi_{\{ \bm v : \mathcal{N}_t \ge M \}}  h d\Leb. \]
By Markov's inequality and \eqref{convNt}, we get
\[ \int_{\{ h \le R \}}  \chi_{\{ \bm v : \mathcal{N}_t \ge M \}}  h d\Leb  \le \frac {R C_{\mathcal{A}}}M. \]
For the second summand, we have the bound (recall that $\int_{S^{d-1}} h d\Leb = 1$)
\[\int_{\{ h > R \}}  \chi_{\{ \bm v : \mathcal{N}_t \ge M \}}  h d\Leb \le \int_{\{ h > R \}}  h d\Leb \le \frac 1R. \]
It follows upon choosing $R = \frac 1{\sqrt{\delta}}$ that for every $\delta > 0$, there exists $M$ ($= \frac {C_{\mathcal{A}}}{\delta}$) such that for every $t > 0,$ \[\lambda(\{ \bm v \in S^{d-1} : \mathcal{N}_t > M \}) \le 2 \sqrt{\delta}, \] which proves tightness.
	We can thus, thanks to Prokhorov's criterion for relative compactness, find an increasing sequence $(t_i) \in (\mathbb{R}^+)^{\mathbb{N}}$ tending to infinity and a random variable $\mathcal{N}$ such that \[\mathcal{N}_{t_i} \xrightarrow[i \to \infty]{d} \mathcal{N}.\]
By assumption, for each $\varepsilon > 0$ we can also find a random variable $\mathcal{N}^\varepsilon$ such that \[ \mathcal{N}_t^\varepsilon \xrightarrow[t \to \infty]{d} \mathcal{N}^\varepsilon. \]
Now, for every $\varepsilon > 0$, every $m \in \mathbb{N}$ and every $i \in \mathbb{N}$, 
\begin{align*} 0 &\le \lambda(\{ \bm v \in S^{d-1} : \mathcal{N}_{t_i}^\varepsilon \ge m \}) - \lambda(\{ \bm v \in S^{d-1} : \mathcal{N}_{t_i} \ge m \}) \\
 &= \lambda(\{\mathcal{N}_{t_i}^\varepsilon \ge m \land \mathcal{N}_{t_i} \le m-1\}) \\
&\le \lambda(\{\mathcal{N}_{t_i}^\varepsilon - \mathcal{N}_{t_i} \ge 1\}) \\
&\le \sqrt{\varepsilon}  C_{\mathcal{A}} + \sqrt{\varepsilon}, \end{align*}
where the last inequality follows from a similar argument to the one used in the course of proving tightness --- choosing $R = \frac 1{\sqrt{\varepsilon}}$ this time.
Indeed, if we proceed as before to estimate $\int_{S^{d-1}} (\mathcal{N}_t^\varepsilon - \mathcal{N}_t) d\lambda$ and use \eqref{upperasydens} instead of \eqref{asydens} in the proof of the analogue of \autoref{boundedexpectation}, we end up with the upper bound $R \varepsilon C_\mathcal{A} + \frac 1R$ which is the sought upper bound with our choice of $R = \frac 1{\sqrt{\varepsilon}}$.

Letting $i \to \infty$, we get \[ \forall \varepsilon > 0, \, \forall m \in \mathbb{N}, \, 0 \le \lambda(\mathcal{N}^\varepsilon \ge m) - \lambda(\mathcal{N} \ge m) \le \sqrt{\varepsilon}  C_{\mathcal{A}} + \sqrt{\varepsilon}. \]
Therefore \[\forall m \in \mathbb{N}, \, \lambda(\mathcal{N} \ge m) = \lim_{\varepsilon \to 0} \lambda(\mathcal{N}^\varepsilon \ge m). \]
We obtain the same conclusion for every subsequence of $(\mathcal{N}_t)_{t >0}$ which converges in distribution, so $(\mathcal{N}_t)_{t >0}$ itself converges in distribution: for suppose the sequence does not converge in distribution to the common limit of all convergent subsequences, then there is a subsequence which does not converge in distribution to said common limit; by tightness, we obtain a subsubsequence which converges, necessarily to the common limit, hence the desired contradiction and conclusion.
\end{proof}

\section{Extension of the applications to certain weak model sets} \label{extension}
Recall that if $\mathcal{W}$ is such that $m_f(\partial W) = 0$, 
then we can use \autoref{mainthm} with $\mathcal{A} = \mathscr{C} \times \mathcal{W}$ where $\mathscr{C}$ is a suitably chosen Jordan measurable subset of $\mathbb{R}^d$ depending on the application (a cylinder for the free path length and a cone for directions).

It turns out we can also allow $m_f(\partial \mathcal{W})$ to be positive in some cases, such as the instances discussed in \autoref{examples}.

\begin{defn} Given $\varepsilon > 0$, define the window set $\mathcal{W} \subset \mathbb{A}_f^d$ to be $\varepsilon$-approximable if there exists a bounded set $\mathcal{W}^\varepsilon \subset \mathbb{A}_f^d$ satisfying the following conditions: 
 \begin{enumerate}
  \item $\mathcal{W} \subset \mathcal{W}^\varepsilon$
  \item $m_f(\partial W^\varepsilon) = 0$
  \item $m_f(\mathcal{W}^\varepsilon) \le m_f(\mathcal{W}) + \varepsilon$
 \end{enumerate}
\end{defn}

\begin{thm} If $\mathcal{P}(\mathcal{L}, \mathcal{W})$ is an adelic cut-and-project set with a positive asymptotic density and $\mathcal{W}$ is $\varepsilon$-approximable for every $\varepsilon > 0$, then \autoref{fpl} and \autoref{dir} hold. \end{thm}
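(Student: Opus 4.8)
The plan is to bootstrap from the regular-window case via \autoref{proba}. Fix the lattice $\mathcal{L}=\mathbb{Q}^dA$, the shift $\bm\alpha$, and --- for a given value of the parameter ($\xi$ in \autoref{fpl}, $\sigma$ in \autoref{dir}) --- the associated bounded Jordan measurable set $\mathcal{A}\subset\mathbb{R}^d$: a cylinder $\mathscr{C}_\xi$, respectively a cone $\mathscr{K}_{c,\sigma}$. As in the proofs of \autoref{fpl} and \autoref{dir}, the quantity whose limit we seek is squeezed between two copies, at nearby parameter values, of $\lambda(\{\bm v : \mathcal{N}_t=0\})$, respectively $\lambda(\{\bm v : \mathcal{N}_t=r\})$, where $\mathcal{N}_t=\#(\mathcal{A}\Phi^{-t}K(\bm v)\cap\mathcal{P}(\mathcal{L},\mathcal{W}))$ is the random variable introduced in \autoref{densityincr} (the fixed affine translation coming from $\bm q$ being absorbed into $\mathcal{A}$). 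Hence it suffices to prove that $(\mathcal{N}_t)_{t>0}$ converges in distribution; the passage back to $\rho^{d-1}\tau$ (resp.\ to $\mathcal{N}_T(c,\sigma,\bm v)$) then runs verbatim as in the regular case, once the limiting distribution is known to depend continuously on the parameter.

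To apply \autoref{proba}, for each $\varepsilon>0$ fix a witness $\mathcal{W}^\varepsilon$ of $\varepsilon$-approximability and set $\mathcal{P}:=\mathcal{P}(\mathcal{L},\mathcal{W})$, $\mathcal{P}^\varepsilon:=\mathcal{P}(\mathcal{L},\mathcal{W}^\varepsilon)$, with $\mathcal{N}_t^\varepsilon$ the associated count. Since $\pi$ is injective on $\mathcal{L}$ (a lattice vector with vanishing archimedean component is $\bm0$), we get $\mathcal{P}\subset\mathcal{P}^\varepsilon$ and $\mathcal{P}^\varepsilon\setminus\mathcal{P}=\mathcal{P}(\mathcal{L},\mathcal{W}^\varepsilon\setminus\mathcal{W})$. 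Convergence in distribution of $(\mathcal{N}_t^\varepsilon)_{t>0}$ is \autoref{fpl} (resp.\ \autoref{dir}) for the regular window $\mathcal{W}^\varepsilon$, which is legitimate because $m_f(\partial\mathcal{W}^\varepsilon)=0$; in the rational-shift case one first notes that $\varepsilon$-approximability passes to the translated window $\mathcal{W}_{\bm\beta}=\mathcal{W}-\pi_{\text{int}}(\bm\beta)A_f$ (translate $\mathcal{W}^\varepsilon$ likewise, using translation-invariance of $m_f$ and of the condition $m_f(\partial\,\cdot)=0$). It then remains to check the two density hypotheses \eqref{asydens} and \eqref{upperasydens} of \autoref{densityincr}.

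Hypothesis \eqref{asydens} is the standing assumption that $\mathcal{P}$ has a positive asymptotic density --- which, being the natural extension of the counting asymptotic stated before \autoref{dir} and verified directly (e.g.\ by an Euler product) for the examples of \autoref{examples}, equals $m_f(\mathcal{W})$; translating the point set by $\bm q$ does not affect it. For \eqref{upperasydens} I use $\mathcal{P}\subset\mathcal{P}^\varepsilon$ and $\mathcal{P}^\varepsilon\setminus\mathcal{P}=\mathcal{P}(\mathcal{L},\mathcal{W}^\varepsilon\setminus\mathcal{W})$ to write
\[ \#\big((\mathcal{P}^\varepsilon\setminus\mathcal{P})\cap\mathcal{B}_T\big)=\#(\mathcal{P}^\varepsilon\cap\mathcal{B}_T)-\#(\mathcal{P}\cap\mathcal{B}_T); \]
the first normalized count tends to $m_f(\mathcal{W}^\varepsilon)$ --- this is the counting asymptotic for the \emph{regular} window $\mathcal{W}^\varepsilon$ --- and the second to $m_f(\mathcal{W})$, so their difference tends to $m_f(\mathcal{W}^\varepsilon)-m_f(\mathcal{W})\le\varepsilon$, which is \eqref{upperasydens} (after harmlessly relabeling $\varepsilon$). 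Now \autoref{proba} yields that $(\mathcal{N}_t)_{t>0}$ converges in distribution to some $\mathcal{N}$, with $\mathcal{N}^\varepsilon\xrightarrow{d}\mathcal{N}$, which establishes the existence halves of \autoref{fpl} and \autoref{dir} for the weak window $\mathcal{W}$.

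To pin down the limiting distributions, write $F_{\mathcal{P},\bm\alpha}(\xi)=\lim_{\varepsilon\to0}F_{\mathcal{P}(\mathcal{L},\mathcal{W}^\varepsilon),\bm\alpha}(\xi)$ and insert \eqref{fpllimdistirr}--\eqref{fpllimdistrat} for the regular windows $\mathcal{W}^\varepsilon$: since $\mathscr{C}_\xi\times\mathcal{W}^\varepsilon$ decreases as $\varepsilon\to0$ to $\mathscr{C}_\xi\times\bigcap_\varepsilon\mathcal{W}^\varepsilon$ and $m_f\!\left(\bigcap_\varepsilon\mathcal{W}^\varepsilon\setminus\mathcal{W}\right)=0$, property 4 of \autoref{countprop} (and its affine analogue in \autoref{countpropasl}) lets us replace $\bigcap_\varepsilon\mathcal{W}^\varepsilon$ by $\mathcal{W}$ inside the $\mu$-measure, recovering the formula as stated; \autoref{dir} is treated identically via the monotone quantities $\mu(\{M:\#(\cdot)\ge r\})$ and differences in $r$. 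Finally, continuity of $F_{\mathcal{P},\bm\alpha}$ (resp.\ $G_{\mathcal{P},\bm\alpha,c}$) in the parameter --- needed for the squeeze in the first paragraph --- follows because the bound $0\le\lambda(\mathcal{N}^\varepsilon\ge m)-\lambda(\mathcal{N}\ge m)\le\sqrt\varepsilon\,(C_\mathcal{A}+1)$ from the proof of \autoref{proba} is uniform when $\mathcal{A}$ ranges over cylinders (resp.\ cones) with parameter in a compact set, exhibiting these distributions as locally uniform limits of the continuous regular-window ones. The step I expect to be the real obstacle is the identification of the asymptotic density of the weak model set with $m_f(\mathcal{W})$, and with it hypothesis \eqref{upperasydens}: this is precisely where the $m_f$-negligibility of $\partial\mathcal{W}^\varepsilon$ and the estimate $m_f(\mathcal{W}^\varepsilon)\le m_f(\mathcal{W})+\varepsilon$ have to be converted into a genuine counting statement in $\mathbb{R}^d$, which is what $\varepsilon$-approximability is designed to make possible.
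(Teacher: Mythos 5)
Your proposal is correct and follows essentially the same route as the paper: reduce to the regular windows $\mathcal{W}^\varepsilon$ supplied by $\varepsilon$-approximability and invoke the probabilistic comparison of \autoref{proba} with $\mathcal{P}=\mathcal{P}(\mathcal{L},\mathcal{W})$, $\mathcal{P}^\varepsilon=\mathcal{P}(\mathcal{L},\mathcal{W}^\varepsilon)$ and $\mathcal{A}=\mathscr{C}$. In fact you supply considerably more detail than the paper's one-paragraph proof --- in particular the verification of hypotheses \eqref{asydens} and \eqref{upperasydens} (correctly flagging that this is where the identification of the asymptotic density with $m_f(\mathcal{W})$ is really used), the identification of the limit law, and the continuity needed for the squeeze --- all of which the paper leaves implicit.
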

\begin{proof} Thanks to the $\varepsilon$-approximability assumption we obtain, for each $\varepsilon > 0$, a set $\mathcal{W}^\varepsilon \subset \mathbb{A}_f^d$ whose boundary has measure zero and which has measure at most $\varepsilon$ bigger than that of $\mathcal{W}$. 
To conclude, we use the probabilistic argument from \autoref{densityincr} by taking $\mathcal{P}$ to be $\mathcal{P}(\mathcal{L},\mathcal{W})$, $\mathcal{P}^\varepsilon$ to be $\mathcal{P}(\mathcal{L},\mathcal{W^\varepsilon})$ and $\mathcal{A}$ to be the suitably chosen Jordan measurable set $\mathscr{C}$ mentioned at the beginning of this section. The key observation allowing us to apply \autoref{proba} is the following: 
\[ \mathscr{C} \Phi^{-t} K(\bm v) \cap \mathcal{P}(\mathcal{L}, \mathcal{W}^\varepsilon) = \emptyset \iff  \mathscr{C} \times \mathcal{W}^\varepsilon \cap \delta(\mathbb{Q})^d A \iota(K(\bm v)^{-1} \Phi^t) = \emptyset, \]
where the adelic lattice $\mathcal{L}$ is $\mathbb{Q}^d A$ for some $A \in \mathrm{SL}_d(\mathbb{A})$.
 \end{proof}

To echo our introductory discussion in \autoref{motiv}, we make the construction explicit in the case of the primitive lattice points by showing how the corresponding window set $\mathcal{W} = \prod_{p \in \mathbb{P}} (\mathbb{Z}_p^d \setminus p \mathbb{Z}_p^d)$ is $\varepsilon$-approximable for every $\varepsilon > 0$. The other cases discussed in \autoref{examples} are handled similarly.

Let $\varepsilon > 0$. We can find $n_\varepsilon$ such that \[ \prod_{i = 1}^{n_\varepsilon} \left( 1 - \frac 1{p_i^d} \right) \le \frac 1{\zeta(d)} + \varepsilon, \]
where $p_i$ is the $i$th prime.
Then we define $S_\varepsilon = \{2, 3, \ldots, p_{n_\varepsilon}\}$.
Now the set we use is \[ \mathcal{W}^\varepsilon = \prod_{p \in S_\varepsilon} (\mathbb{Z}_p^d \setminus p \mathbb{Z}_p^d) \times \prod_{p \notin S_\varepsilon} \mathbb{Z}_p^d. \]
Here we even have $\partial \mathcal{W}^\varepsilon = \emptyset$ since $\mathcal{W}^\varepsilon$ is both closed and open.
By construction, the Haar measures of $\mathcal{W}$ and $\mathcal{W}^\varepsilon$ are easily seen to satisfy the required conditions.

\section{Properties of the limiting distributions} \label{propslimdistr}

Using the explicit formulas for the limiting distributions obtained in \autoref{fpl} and \autoref{dir}, their continuity follows from a simple geometric argument using \autoref{adelicSiegel} to compute the expectation after applying Markov's inequality.

\begin{thm}\label{fplcont} For every $\bm \alpha \in \mathbb{R}^d$, $F_{\mathcal{P},\bm \alpha}$ (as given by the expressions \eqref{fpllimdistirr} and \eqref{fpllimdistrat}) is continuous on $\mathbb{R}^+$. \end{thm}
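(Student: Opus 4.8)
The plan is to prove the stronger assertion that $F_{\mathcal{P},\bm \alpha}$ is Lipschitz on $\mathbb{R}^+$, which in particular gives continuity. Fix $\bm \alpha$ and abbreviate $\mathcal{A}_\xi = \mathscr{C}_\xi \times \mathcal{W}$ when $\bm \alpha \in \mathbb{R}^d \setminus \mathbb{Q}^d$ and $\mathcal{A}_\xi = \mathscr{C}_\xi \times \mathcal{W}_{\bm \beta}$ when $\bm \alpha \in \mathbb{Q}^d$. By the explicit formulas \eqref{fpllimdistirr} and \eqref{fpllimdistrat}, $F_{\mathcal{P},\bm \alpha}(\xi)$ is the $\mu$-measure of the set of $g$ for which $\delta(\mathbb{Q})^d g$ misses $\mathcal{A}_\xi$, on $\mathrm{ASL}_d(\mathbb{Q}) \backslash \mathrm{ASL}_d(\mathbb{A})$ in the first case and on $\mathrm{SL}_d(\mathbb{Q}) \backslash \mathrm{SL}_d(\mathbb{A})$ (with $\delta(\mathbb{Q})^d g$ replaced by $\delta(\mathbb{Q})^d g \setminus \{\bm 0\}$) in the second; these sets are measurable as complements of the sets appearing in \autoref{countpropasl}, resp. \autoref{countprop}. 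The only geometric input I need is that for $0 < \xi < \xi'$ one has $\mathscr{C}_\xi \subset \mathscr{C}_{\xi'}$ and
\[ m(\mathcal{A}_{\xi'} \setminus \mathcal{A}_\xi) = \operatorname{vol}(\{\xi \le x_1 < \xi',\ x_2^2 + \cdots + x_d^2 < 1\})\, m_f(\mathcal{W}) = (\xi' - \xi)\operatorname{vol}(\mathcal{B}^{d-1})\, m_f(\mathcal{W}), \]
which is finite since $\mathcal{W}$ is bounded (and $m_f(\mathcal{W}_{\bm \beta}) = m_f(\mathcal{W})$, $\mathcal{W}_{\bm \beta}$ being a translate of $\mathcal{W}$).

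The main step goes as follows. Since $\mathcal{A}_\xi \subset \mathcal{A}_{\xi'}$, any $\delta(\mathbb{Q})^d g$ missing $\mathcal{A}_{\xi'}$ also misses $\mathcal{A}_\xi$, so $F_{\mathcal{P},\bm \alpha}$ is non-increasing and $F_{\mathcal{P},\bm \alpha}(\xi) - F_{\mathcal{P},\bm \alpha}(\xi')$ is the measure of the set of $g$ such that $\delta(\mathbb{Q})^d g$ misses $\mathcal{A}_\xi$ but meets $\mathcal{A}_{\xi'}$. Every such $g$ has a point of $\delta(\mathbb{Q})^d g$ lying in $\mathcal{A}_{\xi'} \setminus \mathcal{A}_\xi$, so this set is contained in $(\mathcal{A}_{\xi'} \setminus \mathcal{A}_\xi)_{\ge 1}^{\#}$, resp.\ $[\mathcal{A}_{\xi'} \setminus \mathcal{A}_\xi]_{\ge 1}$. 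Now I would invoke the bound $\mu(\mathcal{B}_{\ge 1}^{\#}) \le m(\mathcal{B})$, resp.\ $\mu([\mathcal{B}]_{\ge 1}) \le m(\mathcal{B})$, valid for every bounded Borel $\mathcal{B}$: this is precisely the chain of (in)equalities in the proof of property 4 of \autoref{countprop}, resp.\ the fourth property of \autoref{countpropasl} — an application of Markov's inequality followed by the Siegel--Weil formula \autoref{SiegelWeilformula} (resp.\ its affine analogue, as used there) — none of which requires $m(\mathcal{B}) = 0$. Applying it to $\mathcal{B} = \mathcal{A}_{\xi'} \setminus \mathcal{A}_\xi$ yields
\[ 0 \le F_{\mathcal{P},\bm \alpha}(\xi) - F_{\mathcal{P},\bm \alpha}(\xi') \le m(\mathcal{A}_{\xi'} \setminus \mathcal{A}_\xi) = (\xi' - \xi)\operatorname{vol}(\mathcal{B}^{d-1})\, m_f(\mathcal{W}), \]
so $F_{\mathcal{P},\bm \alpha}$ is Lipschitz with constant $\operatorname{vol}(\mathcal{B}^{d-1})\, m_f(\mathcal{W})$, hence continuous on $\mathbb{R}^+$.

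I do not expect a genuine obstacle here; the only point requiring care is the bookkeeping between the two cases — on $\mathrm{ASL}_d$ one uses the affine (inhomogeneous) Siegel--Weil formula and counts all of $\delta(\mathbb{Q})^d g$, whereas on $\mathrm{SL}_d$ one uses \autoref{SiegelWeilformula} as stated and the origin is excluded, which only strengthens the bound. One should also note that the standing hypothesis $m_f(\partial \mathcal{W}) = 0$ plays no role in this particular argument: it is needed only upstream, to guarantee via \autoref{fpl} that $F_{\mathcal{P},\bm \alpha}$ is given by the closed forms \eqref{fpllimdistirr} and \eqref{fpllimdistrat} in the first place.
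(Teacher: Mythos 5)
Your proof is correct and follows essentially the same route as the paper: the paper likewise bounds $|F_{\mathcal{P},\bm\alpha}(\xi+h)-F_{\mathcal{P},\bm\alpha}(\xi)|$ by the measure of the event that the lattice meets $(\mathscr{C}_{\xi+h}\triangle\mathscr{C}_\xi)\times\mathcal{W}$ and then applies Markov's inequality together with the Siegel--Weil formula to get the bound $\vol(\mathscr{C}_{\xi+h}\triangle\mathscr{C}_\xi)\,m_f(\mathcal{W})=O(|h|)$. Your phrasing via monotonicity and the explicit Lipschitz constant $\vol(\mathcal{B}^{d-1})\,m_f(\mathcal{W})$ is only a cosmetic sharpening of the paper's $O(|h|)$ estimate.
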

\begin{proof}
Recall the formula we obtained for the limiting distribution:

if  $\bm \alpha \in \mathbb{R}^d \setminus \mathbb{Q}^d$,
\[ F_{\mathcal{P},\bm \alpha}(\xi) = F_\mathcal{P}(\xi) = \mu(\{g \in \mathrm{ASL}_d(\mathbb{Q}) \backslash \mathrm{ASL}_d(\mathbb{A}) \, : \,  \delta(\mathbb{Q})^d g \cap (\mathscr{C}_\xi \times \mathcal{W}) = \emptyset \}) \]
while if $\bm \alpha \in \mathbb{Q}^d$, 
\[ F_\mathcal{P, \bm \alpha}(\xi) = \mu(\{M \in \mathrm{SL}_d(\mathbb{Q}) \backslash \mathrm{SL}_d(\mathbb{A}) \, : \,  \delta(\mathbb{Q})^d M \cap (\mathscr{C}_\xi \times \mathcal{W}_{\bm \beta}) = \emptyset \}),  \]
where $\bm \beta \in \delta(\mathbb{Q})^d$ is such that $\bm \alpha = \pi(\bm \beta)$ and $\mathcal{W}_{\bm \beta} = \mathcal{W} - \pi_{\text{int}}(\bm \beta)A_f$.

Consider first the case of $\bm \alpha \in \mathbb{R}^d \setminus \mathbb{Q}^d$.
Let $\xi \ge 0$. For $|h|$ small enough we have, following the outline of proof described before the statement of the theorem:
\begin{align*} |F_\mathcal{P}(\xi & + h) - F_\mathcal{P}(\xi)| \\ 
&\le \mu(\{ g \in \mathrm{ASL}_d(\mathbb{Q}) \backslash \mathrm{ASL}_d(\mathbb{A}) \, : \, \#(\delta(\mathbb{Q})^d g \cap ((\mathscr{C}_{\xi+h} \triangle \mathscr{C}_\xi) \times \mathcal{W}) \ge 1 \}) \\
&\le \int_{SL_d(\mathbb{A})} \int_{\mathbb{A}^d} \sum_{\bm q \in \delta(\mathbb{Q})^d} \chi_{(\mathscr{C}_{\xi+h} \triangle \mathscr{C}_\xi) \times \mathcal{W}}(\bm q M + \bm \zeta) d\bm \zeta d\mu_{\mathrm{SL}}(M) \\
&= \int_{\mathbb{A}^d} \chi_{(\mathscr{C}_{\xi+h} \triangle \mathscr{C}_\xi) \times \mathcal{W}}(\bm x) d\bm x \\
&= m((\mathscr{C}_{\xi+h} \triangle \mathscr{C}_\xi) \times \mathcal{W}) \\
&= \vol(\mathscr{C}_{\xi+h} \triangle \mathscr{C}_\xi) m_f(\mathcal{W}) \\
&= O(|h|),
\end{align*}
hence the desired continuity at $\xi$.

For $\bm \alpha \in \mathbb{Q}^d$ we obtain the same upper bound again, using \autoref{SiegelWeilformula} to compute the integral on the space of adelic lattices.

\end{proof}

\begin{thm} For every $\bm \alpha \in \mathbb{R}^d$ and every $c \in [0,1)$, $\sigma \mapsto G_{\mathcal{P}, \bm \alpha, c}(\sigma, \cdot)$ (as given by the expressions obtained in \autoref{dir})  is continuous on $\mathbb{R}^+$. \end{thm}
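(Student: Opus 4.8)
The plan is to transcribe the proof of \autoref{fplcont} essentially verbatim, with the cylinder $\mathscr{C}_\xi$ replaced by the cone $\mathscr{K}_{c,\sigma}$ and the scalar parameter $\xi$ by $\sigma$. It suffices to fix $r\in\mathbb{Z}_{\ge 0}$ and prove that $\sigma\mapsto G_{\mathcal{P},\bm\alpha,c}(\sigma,r)$ is continuous on $\mathbb{R}^+$. As in \autoref{fplcont} there are two cases, handled in exactly the same way up to a change of homogeneous space and window: for $\bm\alpha\in\mathbb{R}^d\setminus\mathbb{Q}^d$ one works on $\mathrm{ASL}_d(\mathbb{Q})\backslash\mathrm{ASL}_d(\mathbb{A})$ with the window $\mathcal{W}$, and for $\bm\alpha\in\mathbb{Q}^d$ on $\mathrm{SL}_d(\mathbb{Q})\backslash\mathrm{SL}_d(\mathbb{A})$ with $\mathcal{W}_{\bm\beta}=\mathcal{W}-\pi_{\text{int}}(\bm\beta)A_f$ in place of $\mathcal{W}$; I will only spell out the first case.

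First I would write $E_\sigma=\{g:\#(\delta(\mathbb{Q})^d g\cap(\mathscr{K}_{c,\sigma}\times\mathcal{W}))=r\}$, so that $G_{\mathcal{P},c}(\sigma,r)=\mu(E_\sigma)$, and record the event inclusion
\[ E_{\sigma+h}\,\triangle\,E_\sigma\;\subseteq\;\big\{g\,:\,\#\big(\delta(\mathbb{Q})^d g\cap((\mathscr{K}_{c,\sigma+h}\,\triangle\,\mathscr{K}_{c,\sigma})\times\mathcal{W})\big)\ge 1\big\}. \]
This holds because $(\mathscr{K}_{c,\sigma+h}\times\mathcal{W})\triangle(\mathscr{K}_{c,\sigma}\times\mathcal{W})=(\mathscr{K}_{c,\sigma+h}\triangle\mathscr{K}_{c,\sigma})\times\mathcal{W}$ and the cones are nested (see below), so a lattice that misses $(\mathscr{K}_{c,\sigma+h}\triangle\mathscr{K}_{c,\sigma})\times\mathcal{W}$ has the same number of points in $\mathscr{K}_{c,\sigma+h}\times\mathcal{W}$ as in $\mathscr{K}_{c,\sigma}\times\mathcal{W}$. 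Then, exactly as in \autoref{fplcont} (union bound, Markov's inequality, and the Siegel--Weil formula from \autoref{adelicSiegel}),
\[ |G_{\mathcal{P},c}(\sigma+h,r)-G_{\mathcal{P},c}(\sigma,r)|\;\le\;\mu(E_{\sigma+h}\triangle E_\sigma)\;\le\;m\big((\mathscr{K}_{c,\sigma+h}\triangle\mathscr{K}_{c,\sigma})\times\mathcal{W}\big)\;=\;\vol(\mathscr{K}_{c,\sigma+h}\triangle\mathscr{K}_{c,\sigma})\,m_f(\mathcal{W}). \]

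It then remains to estimate $\vol(\mathscr{K}_{c,\sigma+h}\triangle\mathscr{K}_{c,\sigma})$. The cross-sectional radius of $\mathscr{K}_{c,\sigma}$ at each height $x_1$ is an increasing function of $\sigma$, so $\mathscr{K}_{c,\sigma}\subseteq\mathscr{K}_{c,\sigma'}$ whenever $\sigma\le\sigma'$; and a one-line integration --- the same computation that produces the normalisation $\mathbb{E}(\mathcal{N}_T(c,\sigma,\bm v))\to\sigma$ --- gives $\vol(\mathscr{K}_{c,\sigma})=\sigma/m_f(\mathcal{W})$, which is linear in $\sigma$. Hence for $|h|$ small enough that $\sigma+h>0$ we get $\vol(\mathscr{K}_{c,\sigma+h}\triangle\mathscr{K}_{c,\sigma})=|h|/m_f(\mathcal{W})$, and the estimate above collapses to $|G_{\mathcal{P},c}(\sigma+h,r)-G_{\mathcal{P},c}(\sigma,r)|\le|h|$, which tends to $0$ as $h\to 0$. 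Since this modulus does not depend on $r$, the same argument in fact shows that $\sigma\mapsto G_{\mathcal{P},\bm\alpha,c}(\sigma,\cdot)$ is continuous into the space of probability measures on $\mathbb{Z}_{\ge 0}$ for the total variation distance.

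I do not anticipate any real obstacle: the whole argument is a direct translation of \autoref{fplcont}. The one point that has to be checked by hand is the pair of facts just used about the cone --- its monotonicity in $\sigma$ and the identity $\vol(\mathscr{K}_{c,\sigma})=\sigma/m_f(\mathcal{W})$ --- since it is exactly their combination that makes the symmetric-difference volume, and hence the whole bound, exactly linear in $|h|$.
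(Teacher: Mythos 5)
Your proposal is correct and follows exactly the route the paper intends: the paper's own proof of this theorem is literally the one-line remark that it ``follows from the same argument as the proof of \autoref{fplcont} using the formula from \autoref{dir} as a starting point,'' and you have carried out that transcription faithfully (union bound over the symmetric difference of events, Markov plus the Siegel--Weil formula, and the volume estimate for the symmetric difference of the cones). Your explicit verification that the cones are nested in $\sigma$ and that $\vol(\mathscr{K}_{c,\sigma})=\sigma/m_f(\mathcal{W})$, giving an exactly linear modulus $|h|$, is a correct and slightly sharper version of the $O(|h|)$ bound in \autoref{fplcont}.
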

\begin{proof} This follows from the same argument as the proof of \autoref{fplcont} using the formula from \autoref{dir} as a starting point.
\end{proof}

We can also leverage results obtained by Marklof and Str\"ombergsson in the case of lattices in $\mathbb{R}^d$ to obtain a lower bound on the tail of the limiting distribution for the free path lengths.
Indeed, by choosing windows inside $\widehat{\mathbb{Z}}^d$ within our adelic cut-and-project scheme, we produce subsets of lattices in $\mathbb{R}^d$.

\begin{thm} For $\bm \alpha \in \mathbb{R}^d \setminus \mathbb{Q}^d$, we have
\begin{equation} F_\mathcal{P}(\xi) \ge \frac {\pi^{\frac {d-1}2}}{2^d d \Gamma(\frac {d+3}2) \zeta(d)} \xi^{-1} + O(\xi^{-1-\frac 2d}) \end{equation}
as $\xi \to \infty$.
 \end{thm}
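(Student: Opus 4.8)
The plan is to reduce this lower bound to the known tail asymptotics of the free path length distribution for the full integer lattice $\mathbb{Z}^d$, obtained by Marklof and Str\"ombergsson, exploiting that our primitive lattice points sit inside $\mathbb{Z}^d$ --- in the spirit of the remark that windows inside $\widehat{\mathbb{Z}}^d$ produce subsets of genuine lattices in $\mathbb{R}^d$.

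First I would record the crucial inclusion. By the description in \autoref{examples}, the visible lattice points form the adelic cut-and-project set $\mathcal{P} = \mathcal{P}(\mathbb{Q}^d, \mathcal{W})$ with window $\mathcal{W} = \prod_{p \in \mathbb{P}}(\mathbb{Z}_p^d \setminus p\mathbb{Z}_p^d)$, and since $\mathcal{W} \subseteq \widehat{\mathbb{Z}}^d = \prod_{p}\mathbb{Z}_p^d$ one has $\mathcal{P} \subseteq \mathcal{P}(\mathbb{Q}^d, \widehat{\mathbb{Z}}^d) = \mathbb{Z}^d$. Fixing $\bm q \in \mathbb{R}^d$ with $\bm\alpha = -\bm q \notin \mathbb{Q}^d$ (the relevant case of \autoref{fpl}, taken with $A = I_d$), no scatterer is excluded in the definition \eqref{fpldef} of $\tau$ for either point set, and the inclusion of scatterer sets $\mathcal{S}_\rho(\mathcal{P}) \subseteq \mathcal{S}_\rho(\mathbb{Z}^d)$ gives the pointwise inequality $\tau(\bm q, \bm v, \rho; \mathcal{P}) \ge \tau(\bm q, \bm v, \rho; \mathbb{Z}^d)$ for all $\bm v \in S^{d-1}$ and $\rho > 0$. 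Taking $\lambda$-measure of the corresponding nested level sets and letting $\rho \to 0$, using \autoref{fpl} on the left and the limiting free path length theorem of Marklof--Str\"ombergsson \cite{marklof_strombergsson_free_path_length_2010} on the right, I obtain
\[ F_{\mathcal{P}}(\xi) \;\ge\; F_{\mathbb{Z}^d}(\xi), \]
where $F_{\mathbb{Z}^d}$ denotes the limiting distribution of $\rho^{d-1}\tau$ for the lattice $\mathbb{Z}^d$ with a starting point outside $\mathbb{Q}^d$. (Alternatively one can stay entirely inside our framework: $\widehat{\mathbb{Z}}^d$ is a clopen subset of $\mathbb{A}_f^d$, so \autoref{fpl} applies to it and gives $F_{\mathbb{Z}^d}(\xi) = \mu(\{g \in \mathrm{ASL}_d(\mathbb{Q}) \backslash \mathrm{ASL}_d(\mathbb{A}) : \delta(\mathbb{Q})^d g \cap (\mathscr{C}_\xi \times \widehat{\mathbb{Z}}^d) = \emptyset\})$; since $\mathscr{C}_\xi \times \mathcal{W} \subseteq \mathscr{C}_\xi \times \widehat{\mathbb{Z}}^d$, that event is contained in the one defining $F_{\mathcal{P}}(\xi)$ in \eqref{fpllimdistirr}, giving the same inequality.)

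It then remains to insert the tail asymptotics of $F_{\mathbb{Z}^d}$. By \cite{marklof_strombergsson_free_path_length_2010},
\[ F_{\mathbb{Z}^d}(\xi) \;=\; \frac{\pi^{\frac{d-1}{2}}}{2^d d\, \Gamma\!\left(\frac{d+3}{2}\right)\zeta(d)}\,\xi^{-1} + O\!\left(\xi^{-1-\frac{2}{d}}\right) \qquad (\xi \to \infty), \]
and combining this with $F_{\mathcal{P}}(\xi) \ge F_{\mathbb{Z}^d}(\xi)$ yields exactly the asserted lower bound, the $O$-term being inherited, in its lower direction, from the right-hand side.

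The argument is soft throughout; the only point deserving care is the bookkeeping of conventions between the two settings, namely checking that the Boltzmann--Grad scaling $\rho^{d-1}\tau$ and the generic-starting-point hypothesis $\bm\alpha \notin \mathbb{Q}^d$ correspond precisely to the regime in which Marklof and Str\"ombergsson compute the tail constant, and that their error term is at worst $O(\xi^{-1-2/d})$ in the stated range. Since all of the quantitative content is imported from \cite{marklof_strombergsson_free_path_length_2010}, there is no genuine obstacle once these identifications are made; in fact even a non-sharp value of the constant on the right (off by a bounded factor, say, or the $\bm q\in\mathbb{Z}^d$ value in place of the generic one) would still suffice to conclude the stated bound for $\xi$ large.
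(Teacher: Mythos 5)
Your proof is correct and follows essentially the same route as the paper: observe that $\mathcal{P}$ is a subset of a lattice in $\mathbb{R}^d$ (since $\mathcal{W}\subseteq\widehat{\mathbb{Z}}^d$), deduce the pointwise monotonicity of the free path length and hence $F_{\mathcal{P}}\ge \overline{F}$, and then import the Marklof--Str\"ombergsson tail asymptotics for the generic-initial-condition lattice distribution. The only difference is bibliographic: the paper sources the quantitative tail expansion from the companion asymptotics paper (Theorem 1.13 there) rather than from the free path length paper itself, and your extra care about the finite-$\rho$ comparison versus the comparison of the limit formulas is a harmless elaboration of what the paper states in one line.
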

\begin{proof}
We denote the limiting free path length distribution for lattices in $\mathbb{R}^d$ with generic initial condition obtained by Marklof and Str\"ombergsson in \cite[Corollary 4.1]{marklof_strombergsson_free_path_length_2010} by $\overline{F}$. Because our adelic model sets are subsets of lattices in $\mathbb{R}^d$ we immediately get the lower bound $F_\mathcal{P} \ge \overline{F}$.
The claim now follows from \cite[Theorem 1.13]{MarklofStrombergssonAsy}, which is equivalent to the assertion that 
\[\overline{F}(\xi) = \frac {\pi^{\frac {d-1}2}}{2^d d \Gamma(\frac {d+3}2) \zeta(d)} \xi^{-1} + O(\xi^{-1-\frac 2d}) \]
as $\xi \to \infty$.
\end{proof}

\bibliographystyle{plain}
\bibliography{bibliography}

\footnotesize
\parindent=0pt

\textsc{Daniel El-Baz, School of Mathematical Sciences, Tel Aviv University, Tel Aviv, Israel} \texttt{danielelbaz@mail.tau.ac.il}

\end{document}